\newtheorem{theorem}{Theorem}[section]
\newtheorem{lemma}[theorem]{Lemma}
\newtheorem{proposition}[theorem]{Proposition}
\newtheorem{defn}[theorem]{Definition}
\numberwithin{equation}{section}
\newtheorem{lthm}{Theorem} 
\theoremstyle{remark}
\newtheorem{remark}[theorem]{Remark}
\newcommand\EatDot[1]{}
\newcommand{\cyc}{{\mathrm{cyc}}}
\newcommand{\cF}{{\mathcal{F}}}
\newcommand{\gq}{{\mathfrak{q}}}
\newcommand{\gp}{{\mathfrak{p}}}
\newcommand{\Yprime}{\mathcal{Y}^\prime}
\newcommand{\Hf}{H^1_{/f}}
\newcommand{\Span}{\mathrm{Span}}
\newcommand{\Fil}{\mathrm{Fil}}
\newcommand{\Exp}{{\mathrm{exp}}}
\newcommand{\Gal}{{\mathrm{Gal}}}
\newcommand{\Ker}{{\mathrm{Ker}}}
\newcommand{\Coker}{{\mathrm{Coker}}}
\newcommand{\Sel}{{\mathrm{Sel}}}
\newcommand{\GL}{{\mathrm{GL}}}
\newcommand{\cris}{{\mathrm{cris}}}
\newcommand{\Hom}{\mathrm{Hom}}
\newcommand{\loc}{\mathrm{loc}}
\newcommand{\Ind}{\mathrm{Ind}}
\newcommand{\Q}{{\mathbb Q}}
\newcommand{\Z}{{\mathbb Z}}
\newcommand{\A}{{\mathbb A}}
\newcommand{\C}{\mathbb{C}}
\newcommand{\Dcris}{\mathbb{D}_{\mathrm{cris}}}
\DeclareMathOperator{\val}{val}
\DeclareMathOperator{\coker}{coker}
\newcommand{\frob}{\varphi}
\newcommand{\col}{\mathrm{Col}}
\newcommand{\QQ}{\mathbb{Q}}
\newcommand{\ZZ}{\mathbb{Z}}
\newcommand{\Qp}{\mathbb{Q}_p}
\newcommand{\Zp}{\mathbb{Z}_p}
\newcommand{\rank}{\mathrm{rank}}
\newcommand{\HIw}{H^1_{\mathrm{Iw}}}
\definecolor{Green}{rgb}{0.0, 0.5, 0.0}
\title[Mordell--Weil ranks of abelian varieties]{On the Mordell--Weil ranks of supersingular abelian varieties  over $\Z_p^2$-extensions}
\author[C. Dion]{Cédric Dion}
\address[Dion]{D\'epartement de Math\'ematiques et de Statistique\\
Universit\'e Laval, Pavillion Alexandre-Vachon\\
1045 Avenue de la M\'edecine\\
Qu\'ebec, QC\\
Canada G1V 0A6}
\email{cedric.dion.1@ulaval.ca}
\author[J. Ray]{Jishnu Ray}
\address[Ray]{Harish Chandra Research Institute, A CI of Homi Bhabha National Institute,  Chhatnag Road, Jhunsi, Prayagraj (Allahabad) 211 019 India}
\email{jishnuray@hri.res.in; jishnuray1992@gmail.com}
\keywords{supersingular abelian varieties, Mordell--Weil ranks, Coleman maps, Selmer groups.}
\subjclass[2010]{Primary: 11R23; Secondary: 11G10, 11R20}
\begin{document}

\maketitle
\setcounter{tocdepth}{1}
  
\begin{abstract}

Let $p$ be a fixed odd prime and let $K$ be an imaginary quadratic field in which $p$ splits. Let $A$ be an abelian variety defined over $K$ with supersingular reduction at both primes above $p$ in $K$. Under certain assumptions, we give a growth estimate for the Mordell--Weil rank of $A$ over finite extensions inside the $\Z_p^2$-extension of $K$. In the last section, written by Chris Williams, he includes some speculative remarks on the $p$-adic $L$-functions  for $\mathrm{GSp}(4)$ corresponding to the multi-signed Selmer groups constructed in this paper.
\end{abstract}
\tableofcontents
\section{Introduction}
\subsection{Work of Lei--Ponsinet}Let $p$ be an odd prime, $L$ be a number field and $L_\cyc$ be the cyclotomic $\Z_p$-extension of $L$. Let $L_\cyc=\cup_n L_n$ where $\Gal(L_n/L)\cong \Z/p^n\Z$. Let $A$ be an abelian variety over $L$. Suppose $A$ has good ordinary reduction at the primes of $L$ above $p$. Then it is conjectured by Mazur \cite{Mazur} that the classical $p$-primary Selmer group $\Sel_p(A/L_\cyc)$ is cotorsion as a $\Z_p[[\Gal(L_\cyc/L)]]$-module (follows by the work of Kato \cite{kato} and Rohrlich \cite{Roh84} for  elliptic curves defined over $\Q$ and $L/\Q$ abelian). Under this assumption Mazur's control theorem \cite{Mazur} on these classical $p$-primary Selmer groups implies that the Mordell--Weil rank $A(L_n)$ is bounded independently of $n$. However, this technique only works under the ordinary assumption. If the abelian variety has supersingular reduction at one of the primes above $p$, then the $\Sel_p(A/L_\cyc)$ is not cotorsion and Mazur's control theorem will not hold. Now suppose that $p$ is unramified in $L$  and $A$ has supersingular reduction at all primes of $L$ above $p$. Then Büyükboduk and Lei \cite{buyukboduklei0} constructed signed Selmer groups for $A$; these generalize plus and minus Selmer groups of Kobayashi \cite{kobayashi03} for $a_p=0$ and Sprung's $\sharp / \flat$-Selmer groups for $p \mid a_p\neq 0$  \cite{sprungfactorizationinitial}. The former are proved to be cotorsion for elliptic curves over $\Q$ and the latter are conjectured to be cotorsion as well. In the case when $p \mid a_p \neq 0$, at least one of the $\sharp / \flat$-Selmer groups is known to be cotorsion \cite{sprungfactorizationinitial}. In the case of $a_p=0$, Kobayashi also proved an analogue of the control theorem for $\pm$-Selmer groups. Combined, those results lead to a bound independent of $n$ for the growth of the rank for supersingular elliptic curves \cite[Corollary 10.2]{kobayashi03}. Under the assumption that the signed Selmer groups of Büyükboduk and Lei are cotorsion, in \cite{LP20}, Lei and Ponsinet give an explicit sufficient condition in terms of Coleman maps attached to $A$ which ensures that the rank of the Mordell--Weil group of the dual abelian variety $A^\vee(L_n)$ stays bounded as $n$ varies. Further they show that when the Frobenius {operator} on the Dieudonné module at $p$ can be expressed as certain block anti-diagonal matrix (cf. \cite[Section 3.2.1]{LP20}) then their explicit condition is satisfied; hence their result applies to abelian varieties of $GL_2$-type. 
\subsection{Work of Lei--Sprung} 
{In \cite{LeiSprung2020} (see also \cite{Spr21} for a less technical exposition), Lei and Sprung consider the special case of elliptic curves defined over $\Q$ and the base field  $F=K$, an imaginary {quadratic} field in which $p$ splits. The Mordell--Weil rank of {$E$} at the $n$-th layer of a $\Z_p$-extension of $K$ is know to be of the form $ap^n + O(1)$ where $a$ is an integer called the growth number. When $E$ has ordinary reduction at $p$, this growth number $a$ is equal to $0$ for the cyclotomic $\Z_p$-extension of $K$ by the works of Kato and Rohrlich. But works of \cite{Bertolini95}, \cite{Ne12}, \cite{Ho04}, \cite{Cor02} and \cite{Vat03} exhibit cases where $a=1$ along the anticyclotomic extension of $K$ when the root number of $E/K$ is $-1$ as it is predicted by Mazur's growth number conjecture \cite[\S 18, p.201]{MazGrowthNumber}. In the supersingular $p \geq 5$ case, \cite{longovigni} also finds $a=1$ under some assumptions. So along the anticyclotomic tower, the Mordell--Weil rank may be unbounded. Let $K_\infty$ be the $\Z_p^2$-extension of $K$ obtained as a compositum of the cyclotomic and the anticyclotomic extension of $K$. Let $K_n$ be a subfield of $K_\infty$ such that $\Gal(K_n/K)\cong (\Z/p^n\Z)^2$. Suppose the elliptic curve $E$ has good supersingular reduction at $p$, Lei and Sprung constructed four $\sharp/\flat$-Selmer groups for elliptic curves over the $\Zp^2$-extension $K_\infty$, generalizing works of Kim for the case $a_p=0$ \cite{kimdoublysigned}. Assume that $E$ is an elliptic curve over $\Q$ with conductor prime to $p$ and the class number of $K$ is coprime to $p$. When $p|a_p$, but $a_p\neq 0$, suppose that one of the four $\pm\pm$-Selmer groups defined by Kim \cite{kimdoublysigned} is cotorsion. Then, Lei and Sprung show that $\rank_{} E(K_n) =O(p^n)$. In the case $a_p=0$, they show that $\rank_{} E(K_n) =O(p^n)$ under the assumption that all of the four $\sharp/\flat\sharp/\flat$-Selmer groups they defined are cotorsion.
\subsection{Our work in this article}
{ Kazim Büyükboduk  and Antonio Lei jointly started a program to generalize the plus/minus theory of  Kobayashi and Pollack in the case of motives crystalline at $p$. This led to their series of joint papers \cite{buyukboduklei0}, \cite{buyukbodukleiPLMS} and \cite{BL21}. Note that \cite{buyukboduklei0} and \cite{buyukbodukleiPLMS} (including the work of Lei--Ponsinet mentioned above) are over the cyclotomic $\Z_p$-extension and \cite{BL21} is for representations coming from modular forms. In this paper our results add to that program giving us results for abelian varieties over a maximal abelian pro-$p$ extension of a number field that is unramified outside $p$.}

We restrict ourselves to the case of imaginary quadratic field $K$ where $p$ splits. But instead of working with elliptic curves we work with abelian varieties with supersingular reduction at both primes above $p$. In this case, we have at our disposal the signed Selmer groups of Büyükboduk and Lei \cite{buyukboduklei0} which are defined only over the cyclotomic extension of $K$. The objectives in our paper are threefold.
We 
\begin{itemize}
    \item define {multi-}signed Selmer groups for $A$ over the $\Z_p^2$-extension $K_\infty$. In order to achieve this we construct multi-signed Coleman maps  attached to the abelian variety $A$. This gives us the local condition at primes above $p$. 
    \item give an explicit sufficient condition in terms of Coleman maps (hypothesis \textbf{(H-large)}) mentioned in the text) attached to $A$ which ensures that the Mordell--Weil rank  is bounded by a function which is $O(p^n)$ along the tower $K_n$  (see theorem \ref{thm:mainone}). 
    \item show that this explicit condition \textbf{(H-large)} is satisfied when the Frobenius on the Dieudonné module at primes above $p$ can be expressed in a certain block diagonal form. This special case  can be thought of as an analogue for the case $a_p=0$ for supersingular elliptic curves and happens for abelian varieties of the $GL_2$-type (see section \ref{sec:blockanti}). When our signed Selmer groups for $A$ are cotorsion, we hence deduce $\rank_{}A^\vee(K_n)=O(p^n)$ (see theorem \ref{thm:Main2}).
\end{itemize}

{In order to state the main results more precisely, let us introduce some more notation first. Let $A^\vee$ be the dual abelian variety and let $\Sel_{\underline{J}}(A^\vee/K_\infty)$ be the multi-signed Selmer group attached to $A$ constructed in section \ref{Selmer groups}. Let $\mathcal{X}_{\underline{J}}(A^\vee/K_\infty)$ denote the Pontryagin dual $$\Hom_{\mathrm{cts}}(\Sel_{\underline{J}}(A^\vee/K_\infty),\Qp/\Zp).$$ Let $\Lambda$ be the completed group ring $\Zp \llbracket \Gal(K_\infty/K) \rrbracket$. Then, the first of our main theorems is the following:
\begin{lthm}[Theorem \ref{thm:mainone}]
Suppose that $\mathcal{X}_{\underline{J}}(A^\vee/K_\infty)$ is a torsion $\Lambda$-module for some $\underline{J}$ and that hypothesis \textbf{(H-large)} is satisfied. We have 
$
\rank_{}A^\vee (K_n) = O(p^n).
$
\end{lthm}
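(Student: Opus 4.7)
The plan is to bound $\rank_\Z A^\vee(K_n)$ by the $\Zp$-corank of the classical $p$-Selmer group via the Kummer sequence
\begin{equation*}
0 \to A^\vee(K_n) \otimes_\Z \Qp/\Zp \to \Sel_{p^\infty}(A^\vee/K_n) \to W_n \to 0,
\end{equation*}
where $W_n$ is the $p$-primary part of the Tate--Shafarevich group of $A^\vee/K_n$. It then suffices to show that $\operatorname{corank}_{\Zp}\Sel_{p^\infty}(A^\vee/K_n) = O(p^n)$. For this, I would reduce the classical Selmer group to the multi-signed one: at each prime $v \mid p$ of $K$, the two local conditions differ by a submodule cut out by the Coleman maps $\col_{v,\underline{J}}$ constructed earlier in the paper. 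The hypothesis \textbf{(H-large)} is designed precisely so that the total Coleman map has finite $\Lambda$-cokernel in the relevant product of local Iwasawa cohomologies, yielding a layerwise comparison
\begin{equation*}
\operatorname{corank}_{\Zp}\Sel_{p^\infty}(A^\vee/K_n) \le \operatorname{corank}_{\Zp}\Sel_{\underline{J}}(A^\vee/K_n) + O(p^n),
\end{equation*}
in the spirit of \cite{LP20} but adapted from the cyclotomic tower to the $\Zp^2$-setting.

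Next I would invoke a control-type theorem comparing $\Sel_{\underline{J}}(A^\vee/K_n)$ with the $\Gamma_n$-invariants of $\Sel_{\underline{J}}(A^\vee/K_\infty)$, where $\Gamma_n := \Gal(K_\infty/K_n)$, with kernel and cokernel of $\Zp$-corank at most $O(p^n)$. Dually, this reduces the question to estimating $\rank_{\Zp}\bigl(\mathcal{X}_{\underline{J}}(A^\vee/K_\infty)_{\Gamma_n}\bigr)$. For the final step I would appeal to the two-variable Iwasawa estimate: for any finitely generated $\Lambda$-torsion module $M$ with $\Lambda \cong \Zp\lb T_1, T_2 \rb$, a Cuoco--Monsky-type count of the intersection of the characteristic divisor of $M$ with the finite subscheme $V(\omega_n^{(1)},\omega_n^{(2)}) \subset \Spec \Lambda$ gives $\rank_{\Zp} M_{\Gamma_n} = O(p^n)$. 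Applied to $M = \mathcal{X}_{\underline{J}}(A^\vee/K_\infty)$, which is $\Lambda$-torsion by hypothesis, the chain of inequalities closes to produce the asserted bound.

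The principal obstacle is the local comparison. Because every prime above $p$ has full-rank decomposition group in $\Gamma \cong \Zp^2$, the local cohomology $H^1(K_{n,v}, A^\vee[p^\infty])$ already has $\Zp$-corank of order $p^{2n}$, so one must leverage \textbf{(H-large)} to achieve a genuine cancellation bringing the error down by a full factor of $p^n$. In the one-variable cyclotomic setting of \cite{LP20} this is handled by a direct Coleman-image computation; in the two-variable setting one must also ensure compatibility between the Coleman maps at the two split primes above $p$, which is where the combinatorial structure of the ``multi-signed'' conditions plays a decisive role. A secondary technical concern is bookkeeping the error terms in the control theorem so that they remain $O(p^n)$ rather than $O(p^{2n})$; this requires a careful analysis of local Euler--Poincaré contributions at the primes of bad reduction away from $p$.
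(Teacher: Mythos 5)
Your high-level ingredients are the right ones (Coleman maps, the hypothesis \textbf{(H-large)}, a Cuoco--Monsky/Harris-type estimate on coinvariants of a torsion $\Lambda$-module), but your decomposition of the argument is genuinely different from the paper's, and it leaves two substantial steps unsupplied, so as stands there is a gap.

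First, you propose a finite-level comparison $\operatorname{corank}_{\Zp}\Sel_{p^\infty}(A^\vee/K_n) \le \operatorname{corank}_{\Zp}\Sel_{\underline J}(A^\vee/K_n)+O(p^n)$, and you yourself flag the obstruction: since each prime above $p$ has full decomposition group in $\Gamma\cong\Zp^2$, the local cohomology at layer $n$ grows like $p^{2n}$, so the naive local comparison gives a $p^{2n}$ error rather than $p^n$; you then gesture at ``a genuine cancellation coming from \textbf{(H-large)}.'' But in the paper \textbf{(H-large)} is not used for any comparison of local conditions between the classical and the signed Selmer group. It is used in a wedge-product argument on the Coleman maps and the logarithmic minors $H_{\underline I_0,\underline J,r,s}$ (Lemmas \ref{wedgelemma}--\ref{wedge2lemma}) to bound, character by character, the module $\mathcal{Y}^\prime_n=\Coker\bigl(\HIw(K_\infty,T)_{\Gamma_n}\to\prod_{v\mid p}\Hf(K_{n,v},T)\bigr)$, where the quotient is by the \emph{Bloch--Kato} condition $H^1_f$, not the signed condition. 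Turning that into a finite-level comparison of the two Selmer groups with an $O(p^n)$ error would require new work that the paper deliberately avoids.

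Second, your control step — comparing $\Sel_{\underline J}(A^\vee/K_n)$ with $\Sel_{\underline J}(A^\vee/K_\infty)^{\Gamma_n}$ with $O(p^n)$ error — is another substantial missing piece. The paper never defines finite-level multi-signed Selmer groups nor proves a control theorem for them in the $\Zp^2$-setting, precisely because such control results are delicate here. Instead, the paper feeds the Poitou--Tate exact sequence into the coker modules and obtains the short exact sequence $0\to\mathcal{Y}_n\to\mathcal{X}_n\to\mathcal{X}^0_n\to 0$ (dual Selmer = $\mathcal{X}_n$, dual fine Selmer = $\mathcal{X}^0_n$). It then bounds $\mathcal{Y}_n$ via the surjection $\mathcal{Y}^\prime_n\twoheadrightarrow\mathcal{Y}_n$ together with Proposition \ref{prop: bound1} (which is exactly where \textbf{(H-large)} enters), and bounds $\mathcal{X}^0_n$ via a fine-Selmer control argument (Proposition \ref{prop:fine}), using Lemma \ref{torsion} to make the restriction maps isomorphisms at $p$ and \cite[p.~270]{Gr03} away from $p$, followed by Harris's $\rank_\Zp M_{\Gamma_n}=rp^{2n}+O(p^n)$ estimate applied to the torsion module $\mathcal{X}^0$. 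Control for the fine Selmer group is classical; control for the signed Selmer group over $\Zp^2$ is not, and your chain of inequalities stands or falls on it.

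So the fix is not a matter of bookkeeping error terms: to make your plan rigorous you would essentially have to prove the two missing statements, which the paper circumvents by the $\mathcal{Y}_n/\mathcal{X}^0_n$ splitting. I suggest reorganising around the Poitou--Tate sequence and the auxiliary module $\mathcal{Y}^\prime_n$, which is where the paper actually spends \textbf{(H-large)}, and keeping your Cuoco--Monsky step but applying it to $\mathcal{X}^0$ (after the fine-Selmer control argument) rather than to $\mathcal{X}_{\underline J}$ via an unproved signed control theorem.
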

To state our next theorem, we need to introduce some further notation. Let $T_p(A)$ be the Tate module of $A$. Suppose that $p=\gp \gp^c$ where $\gp$ and $\gp^c$ are prime ideals of $K$. Let $\mathbb{D}_{\cris,\gp}(T_p(A))$ (reps. $\mathbb{D}_{\cris,\gp^c}(T_p(A))$) be the Dieudonné module of $T_p(A)$ seen as a representation of $\Gal(\overline{K_\gp}/K_\gp)$ (resp. $\Gal(\overline{K_{\gp^c}}/K_{\gp^c})$). These modules are naturally equipped with an action of a Frobenius operator. Let $C_\gp$ and $C_{\gp^c}$ be the matrices defined in section \ref{Hodge theory} arising from the Frobenius action on $\mathbb{D}_{\cris,\gp}(T_p(A))$ and $\mathbb{D}_{\cris,\gp^c}(T_p(A))$ respectively.
\begin{lthm} [Theorem \ref{thm:Main2}]\label{thmB}
Suppose $C_\gp$ and $C_{\gp^c}$ are block anti-diagonal matrices and the Pontryagin dual of the Selmer groups $\Sel_{\underline{J}}(A^{{\vee}}/K_\infty)$ for $\underline{J}\in \{\underline{I}_0,\underline{I}_1,\underline{I}_{\mathrm{mix}_{0,1}},\underline{I}_{\mathrm{mix}_{1,0}}\}$ are all $\Lambda$-torsion. Then
$
\rank_{}A^\vee (K_n) = O(p^n).
$
\end{lthm}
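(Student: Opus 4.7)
The strategy is to realize Theorem \ref{thmB} as a specialization of Theorem \ref{thm:mainone}: the hypothesis that one of the $\Sel_{\underline{J}}(A^\vee/K_\infty)$ is $\Lambda$-cotorsion is supplied by assumption for each of the four indexing sets $\underline{I}_0, \underline{I}_1, \underline{I}_{\mathrm{mix}_{0,1}}, \underline{I}_{\mathrm{mix}_{1,0}}$, so all that remains is to verify \textbf{(H-large)} for at least one such choice in the block anti-diagonal setting. The plan is to leverage the extra structure of $C_\gp$ and $C_{\gp^c}$ to obtain an explicit factorization of the logarithmic matrices that feed into the Coleman maps, mirroring the classical plus/minus situation at $a_p = 0$ and the block anti-diagonal calculation of Lei--Ponsinet \cite[Section 3.2.1]{LP20}, now carried out on both local factors simultaneously.

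Concretely, I would first recall from the earlier sections of the paper the explicit form of the multi-signed Coleman maps, each built out of Berger's logarithmic matrix associated to the crystalline representation $T_p(A)$ restricted to $\Gal(\overline{K_\gp}/K_\gp)$ and $\Gal(\overline{K_{\gp^c}}/K_{\gp^c})$. In the block anti-diagonal case, the Frobenius acts by swapping the two Hodge-filtration blocks, so the logarithmic matrix on each side decomposes as a product of a unit matrix and a diagonal matrix whose entries are (up to units) the half-logarithms $\log_p^+$ and $\log_p^-$. This is exactly the phenomenon that makes Kobayashi's $\pm$-theory work, and tensoring the two local factors gives four compatible choices of Coleman map pairs indexed by $\underline{I}_0, \underline{I}_1, \underline{I}_{\mathrm{mix}_{0,1}}, \underline{I}_{\mathrm{mix}_{1,0}}$.

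Next I would translate \textbf{(H-large)} into a concrete linear-algebra statement about these decomposed matrices: it should boil down to the condition that the signed Coleman maps, viewed as maps from the Iwasawa cohomology to $\Lambda^{\oplus \dim A}$, have images of maximal rank (up to a power-of-$p$ factor) on each local component. Because the block anti-diagonal form forces the diagonal entries of the logarithmic matrix to be simply distinguished polynomials in cyclotomic variables (powers of $\log_p^\pm$ composed with the Mellin transform on each $\Z_p$-component of $\Gal(K_\infty/K)$), the surjectivity modulo distinguished polynomials that \textbf{(H-large)} demands reduces to checking that these products of half-logarithms are coprime in pairs to the relevant characteristic ideals, which is a standard fact.

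With \textbf{(H-large)} verified for at least one --- in fact, all four --- of the signed Selmer groups whose Pontryagin duals are assumed $\Lambda$-torsion, Theorem \ref{thm:mainone} applies directly and yields $\rank A^\vee(K_n) = O(p^n)$. The main obstacle I anticipate is the combinatorial bookkeeping required to track how the two independent local block anti-diagonal decompositions interact with the four choices of sign patterns indexing the multi-signed Selmer groups, and to confirm that the image of the diagonal (global) Coleman map is large enough after combining the two local factors --- essentially an effective control of the image of $\bigoplus_{v \mid p} \col_v$ inside $\Lambda^{\oplus 2\dim A}$ in the presence of the $\Z_p^2$-Iwasawa algebra rather than a one-variable one.
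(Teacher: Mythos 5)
Your top-level strategy is exactly the paper's: derive Theorem B by verifying \textbf{(H-large)} in the block anti-diagonal case (this is Proposition \ref{block}) and then feed the result into Theorem \ref{thm:mainone}. The parity decomposition you describe — that for block anti-diagonal $C_\gq$ the matrices $H_{\gq,k}(\zeta_{p^k}-1)$ collapse so that at a character $\theta$ of conductor $\gp^{r+1}(\gp^c)^{s+1}$ only one minor $H_{\underline{I}_0,\underline{J},r,s}(\theta)$ survives, with $\underline{J}$ determined by the parities of $r$ and $s$ among $\{\underline{I}_0,\underline{I}_1,\underline{I}_{\mathrm{mix}_{0,1}},\underline{I}_{\mathrm{mix}_{1,0}}\}$ — is also the first half of the paper's argument, imported from \cite[Lemma 3.5]{LP20}.

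Where your argument has a real gap is in the last step: going from ``$\col_{T,\underline{J}}^{k_\infty}(\mathbf{c})\neq 0$ as an element of $\Lambda=\Zp[[\gamma_\gp-1,\gamma_{\gp^c}-1]]$'' to ``$\col_{T,\underline{J}}^{k_\infty}(\mathbf{c})(\theta)\neq 0$ for all characters $\theta$ of conductor $\gp^{r+1}(\gp^c)^{s+1}$ with $|r-s|\gg 0$.'' You phrase \textbf{(H-large)} as a ``surjectivity modulo distinguished polynomials'' condition and dispose of it by asserting coprimality of the relevant power series to characteristic ideals, ``which is a standard fact.'' That is not what \textbf{(H-large)} says and, more seriously, in the two-variable setting this inference simply fails: a nonzero element of $\Zp[[X,Y]]$ can vanish at infinitely many characters (already $X-Y$ does), and even unbalanced conductor $|r-s|\gg 0$ is not enough to rule this out without more information. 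The paper closes this gap with a genuinely quantitative input, namely the valuation estimate \eqref{tech}, $\val_p\bigl(\col_{T,\underline{J}}^{k_\infty}(\mathbf{c})(\theta)\bigr)=a + b/\varphi(p^r)+c/\varphi(p^s)$, which is taken from \cite[Corollary 5.10]{LeiSprung2020} and directly forces the finiteness of the valuation (hence non-vanishing) of the surviving term. You flag the two-variable subtlety at the very end as a potential obstacle, which shows good instincts, but you then treat it as ``combinatorial bookkeeping'' rather than recognizing it as the place where an external growth estimate is indispensable. Without identifying an ingredient of this kind --- and the paper's own introduction warns that generalizing \eqref{tech} is a genuine technical difficulty --- the verification of \textbf{(H-large)} is incomplete.

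A secondary technical slip: the nonzero entries of the logarithmic matrix $M_{T,\gq}$ in the block anti-diagonal case are not distinguished polynomials but elements of $\mathcal{H}(\Gamma_{\mathrm{cyc}})$ with unbounded growth (the $\pm$-logarithms). This doesn't change the structure of the argument but does mean the ``surjectivity modulo distinguished polynomials'' framing is not one the hypothesis admits.
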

The Selmer groups appearing in theorem B are all multi-signed Selmer groups for particular choices of the indexing set $\underline{J}$ (see the discussion before proposition \ref{block}). \bigbreak
}
Let the abelian variety $A$ be of dimension $g$ and such that $C_\gp$ and $C_{\gp^c}$ are block anti-diagonal. The indexing set $\underline{J}:=(J_\gp,J_{\gp^c})$ where $J_\gp$ and $J_{\gp^c}$ are subsets of $\{1,...,2g\}$ such that $|J_\gp| + |J_{\gp^c}|=2g$. 
Note that the Selmer groups $\Sel_{\underline{J}}(A^\vee/K_\infty)$ depend not only on the indexing set $\underline{J}$ but also on the choice of Hodge-compatible bases of the Dieudonné modules $\mathbb{D}_{\mathrm{cris},\gp}(T_p(A))$ and $\mathbb{D}_{\mathrm{cris},\gp^c}(T_p(A))$ (see section \ref{Selmer groups} and remark \ref{changebasis}). This is also the case for the multi-signed Selmer groups defined in \cite{buyukboduklei0} and \cite{LP20} for supersingular abelian varieties in the cyclotomic case. When $2g=4$, upon fixing a Hodge-compatible bases, the indexing set $\underline{J}$ has cardinality $70$. But all of these $70$ multi-signed Selmer groups are \textit{not} canonically defined. Defining multi-signed Selmer groups canonically is crucial in framing corresponding Iwasawa main conjectures relating our multi-signed Selmer groups to canonically defined $p$-adic $L$-functions in the analytic side.  A new key observation made in this paper is that the specific \textit{four multi-signed Selmer groups} mentioned in theorem \ref{thmB} are independent of the choice of the Hodge-compatible bases and hence canonically defined (see proposition \ref{canonical}). \footnote{We thank David Loeffler for suggesting this to us.}   Therefore, this gives a strong incentive for regarding these particular multi-signed choices as ``more fundamental and arithmetically interesting" than the other multi-signed Selmer groups. (In the case of elliptic curves over the $\Z_p^2$-extension $K_\infty$, they correspond to the usual four  $\sharp/\flat$ signed Selmer groups of Lei and Sprung, generalizing four $+/-$ signed Selmer groups of Kim when $a_p=0$.) Apart from these four multi-signed Selmer groups, we also show that the \textit{two other multi-signed Selmer groups} corresponding to the choices $\underline{J}=(\{1,...,2g\}, \emptyset)$ and $\underline{J}=(\emptyset, \{1,...,2g\})$ are also canonically defined and independent of the choice of Hodge-compatible bases (see proposition \ref{canonical}). These two Selmer groups should correspond to BDP type $p$-adic $L$-functions (see section \ref{sec:speculative remarks}).

We now give an outline of the paper. In section \ref{sec:prem}, we introduce our notations and recall the basic preliminaries on $p$-adic Hodge theory that we will need throughout our article.  The definition of multi-signed Selmer groups and Coleman maps are given in section \ref{sec: multi}. {In section 4, under the hypothesis \textbf{(H-large)}, we use multi-signed Coleman maps and logarithmic matrices to study the growth of
$$
\mathcal{Y}_n^\prime \colonequals \Coker\left( H_{\mathrm{Iw}}^1(K_\infty,T)_{\Gamma_n}\to \prod_{v|p}\frac{H^1(K_{n,v},T)}{H^1_f(K_{n,v},T)} \right)
$$
where $H^1_f(K_{n,v},T)$ is the Bloch-Kato local condition defined using the kernel of the dual exponential map. To achieve this, we generalized the work of \cite{LP20} to the setting of $\mathbb{Z}_p^2$-extensions by using inputs from \cite{LeiSprung2020}. We find that $\rank_{\Zp}(\mathcal{Y}_n^\prime)=O(p^n)$. In section 5, we consider the short exact sequence
$$
0\to \mathcal{Y}_n \to \mathcal{X}_n \to \mathcal{X}_n^0 \to 0
$$
where $\mathcal{X}_n$ is the Pontryagin dual of the classical $p$-Selmer group of $A^\vee$ over $K_n$, $\mathcal{X}_n^0$ is the Pontryagin dual of the fine Selmer group of $A^\vee$ and
$$
\mathcal{Y}_n \colonequals \Coker \left( H^1_\Sigma(K_n,T) \to \prod_{v|p}\frac{H^1(K_{n,v},T)}{H^1_f(K_{n,v},T)} \right).
$$
Under the assumption that at least one of the Selmer groups $\Sel_{\underline{J}}(A^\vee/K_\infty)$ is $\Lambda$-cotorsion, we show that $\rank_{\Zp}(\mathcal{X}_n^0)$ is also $O(p^n)$. Using the natural surjection $\mathcal{Y}_n^\prime\to \mathcal{Y}_n$, we deduce that the growth of $\mathcal{Y}_n$ is at most $O(p^n)$. We are then able to conclude that the Mordell-Weil rank of $A^\vee$ is $O(p^n)$ along $K_\infty$.}

{One expects that there should be analytic analogues of all of the above, corresponding to our (algebraic) results under appropriate Iwasawa main conjectures. In section \ref{sec:speculative remarks}, written by Chris Williams, we give some speculative remarks on the shape of the analytic theory.}

The above concludes the outline. Now, before going to the next section, we would like to make further remarks. In this paper we stick to the case of  $\Z_p^2$-extension of an imaginary quadratic field instead of working more generally over $\Z_p^d$-extension of a number field. Our guess is that the growth condition will be $O(p^{(d-1)n})$, but we will need to make the hypotheses that the multi-signed Selmer groups are cotorsion over the corresponding Iwasawa algebra. Such hypotheses are  currently only known for elliptic curves under non-vanishing of signed $p$-adic $L$-functions over $\Z_p^2$-extension of an imaginary quadratic field (see remark \ref{rem:4.3}).
The analytic side for general abelian varieties are even more mysterious for $d>2$ (see section \ref{sec:speculative remarks}), and in this case (the Euler system machinery is also unavailable) as per our knowledge,  there are currently no results in the literature towards cotorsion-ness of signed Selmer groups.
Even if we assume that the signed Selmer groups are cotorsion, the present techniques in this paper will not generalize verbatim for $d>2$. We also faced technical difficulty generalizing \eqref{tech} for general $\Z_p^d$-extensions.

\section*{Acknowledgements}
We are grateful to Antonio Lei for answering many of our questions. We thank David Loeffler and Chris Williams for several useful conversations. {We also thank Kâz\i m Büyükboduk, Eknath Ghate,  Mahesh Kakde, Chan-Ho Kim, Debanjana Kundu, Meng Fai Lim,  Filippo  Nuccio, Gautier Ponsinet, Dipendra Prasad, Anwesh Ray and  Romyar Sharifi  for comments and corrections that helped improving the quality of the paper.} Finally, we are grateful for the referee's many constructive comments. The first named author's research is supported by the Canada Graduate Scholarships – Doctoral program from the Natural Sciences and Engineering Research Council of Canada. The second author gratefully acknowledges support from the Inspire Research Grant, DST, Govt. of India. 


\section{Preliminaries}\label{sec:prem}
\subsection{Local and Global setup}
Let $p \geq 3$ be a fixed prime number for the rest of the paper. Let $K$ be an imaginary quadratic field in which $p$ splits into the primes $\gp$ and $\gp^c$ of $K$. Here, $c$ denotes the complex conjugation. We will always use $\gq$ to mean an element of the set $\{\gp,\gp^c\}$. Let $K_\infty$ be the unique $\Zp^2$-extension of $K$ with Galois group $\Gamma \colonequals \Gal(K_\infty/K)$. If $\mathfrak{a}$ is an ideal of $\mathcal{O}_K$, $K(\mathfrak{a})$ will denote the ray class field of $K$ of conductor $\mathfrak{a}$. If $n \geq 0$ is an integer, write $\Gamma_n \colonequals \Gamma^{p^n}$ and $K_n \colonequals K_\infty^{\Gamma_n} = K(p^{n+1})^{\Gal(K(1)/K)}$. We make the hypothesis that $K(1)\bigcap K_\infty=K$. It follows that the Galois group $\Gamma$ is isomorphic to $G_{\gp}\times G_{\gp^c}$ where $G_\gq$ is the Galois group of the extension $K(\gq^\infty) \bigcap K_\infty / K$. We fix topological generators $\gamma_\gp$ and $\gamma_{\gp^c}$ respectively for these groups. Let $\Lambda \colonequals \Zp \llbracket \Gamma \rrbracket \cong \Zp \llbracket \gamma_\gp-1, \gamma_{\gp^c}-1\rrbracket$. Sometimes, we shall also use the notation $\Lambda(\Gamma)$ for $\Lambda$ when we want to emphasize that $\Lambda$ is the Iwasawa algebra of the group $\Gamma$. More generally, if $G$ is any profinite group, we denote by $\Lambda(G)$ the completed group ring $\mathbb{Z}_p \llbracket G \rrbracket$. Write $\Gamma_\gq$ for the decomposition group of $\gq$ in $\Gamma$.  Let $\mu_{p^n}$ denote the set of $p^n$th roots of unity and let
$\mu_{p^\infty}\colonequals \bigcup_{n\geq 1}\mu_{p^n}$.

We let $F_\infty$ and $k_\text{cyc}$ be the unramified $\Zp$-extension and the cyclotomic $\Zp$-extension of $\Qp$ respectively. Let $k_\infty$ be the compositum of $F_\infty$ and $k_\text{cyc}$. For $n \geq 0$, $k_n$ and $F_n$ denote the subextension of $k_\text{cyc}$ and $F_\infty$ such that $[k_n:\Qp]=p^n$ and $[F_n:\Qp]=p^n$. Write $\Gamma_p \colonequals \Gal(k_\infty/\Qp) \cong \Zp^2$, $\Gamma_\text{ur} \colonequals \Gal(F_\infty/\Qp) = \overline{\langle \sigma \rangle} \cong \Zp$ and $\Gamma_\text{cyc} \colonequals \Gal(k_\text{cyc}/\Qp) = \overline{\langle \gamma\rangle} \cong \Zp$. We identify $K_\gq$ with $\Qp$, $\Gamma_\gq$ with $\Gamma_p$ and $G_\gq$ with $\Gamma_\text{cyc}$ via $\gamma_\gq \mapsto \gamma$. Let $\mathcal{H}(\Gamma_\text{cyc})$ be the set of power series
$$
\sum_{n\geq 0} c_{n} \cdot (\gamma -1 )^n
$$
with coefficients in $\mathbf{Q}_p$ such that $\sum_{n\geq 0}c_{n}X^n$ converges on the open unit disk.

\subsection{$p$-adic Hodge Theory}\label{Hodge theory}
For this subsection only, let $K$ be any number field where all primes above $p$ are unramified. Note that for our purpose, $p$ will be totally split in $K$ and so we can take $K_v= \Qp$ in the following discussion {where $v$ is a prime of $K$ above $p$}. Denote by $\mathcal{O}_{K_v}$ the ring of integers of $K_v$ and let $\chi:\Gal(K_v(\mu_{p^\infty})/K_v) \to \Zp^\times$ be the cyclotomic character. Let $\mathcal{M}_{/K}$ be a motive defined over $K$ {in the sense of \cite{FPR94}} and $\mathcal{M}_p$ its $p$-adic realization. Let $T$ be a $G_K$-stable $\Zp$-lattice inside $\mathcal{M}_p$. We shall denote by $T^\dagger \colonequals \Hom(T,\mu_{p^\infty})$ the cartier dual of $T$ and by $T^\ast(1) \colonequals \Hom(T,\Zp(1))$ the Tate dual of $T$. Suppose that \medbreak
\noindent \textbf{(H.crys)} $\mathcal{M}_p$ is crystalline at all the primes $v$ above $p$ in $K$.\medbreak
For simplicity, suppose that the dimension of $\Ind_{K/\QQ}\mathcal{M}_p$ over $K_v$ is even. This will be the case for example when the motive $\mathcal{M}$ is the motive associated to an abelian variety. Let $2g\colonequals \mathrm{dim}_{K_v}(\Ind_{K/\QQ}\mathcal{M}_p)$ and let $g_\pm := \mathrm{dim}_{K_v}(\Ind_{K/\QQ}\mathcal{M}_p)^{c=\pm 1}$. Let $v$ be a prime above $p$ in $K$ and let $\mathbb{D}_{\cris,v}(\mathcal{M}_p)$ be $(\mathbb{B}_\cris\otimes_{\Qp} \mathcal{M}_p)^{G_{K_v}}$ where $\mathbb{B}_\cris$ is the crystalline period ring defined by Fontaine. It admits the structure of a filtered $\frob$-module. Let $\mathbb{A}_{K_v}^+ \colonequals \mathcal{O}_{K_v} \llbracket \pi \rrbracket$ where $\pi$ is seen as a formal variable equipped with a semilinear action by a Frobenius $\varphi$ which acts as the absolute Frobenius on $\mathcal{O}_{K_v}$ and on $\pi$ by $\varphi(\pi) = (\pi+1)^p-1$, and with an action of $g\in \Gal(K_v(\mu_{p^\infty})/K_v)$ given by $g(\pi) = (\pi+1)^{\chi(g)}-1$.
Let $\mathbb{N}_v(T)$ be the Wach module of $T$ whose existence and properties are shown in \cite[Proposition 2.1.1]{berger04}. It is a free $\mathbb{A}_{K_v}^+$-module of rank $2g$. Furthermore, the quotient $\mathbb{N}_v(T)/\pi \mathbb{N}_v(T)$ is identified with a $\mathcal{O}_{K_v}$-lattice of $\mathbb{D}_{\cris,v}(\mathcal{M}_p)$. We denote by $\mathbb{D}_{\cris,v}(T)$ this $\mathcal{O}_{K_v}$-lattice. It is equipped with a filtration of $\mathcal{O}_{K_v}$-modules $\{ \Fil^i \mathbb{D}_{\cris,v}(T) \}_{i \in \ZZ}$ and a Frobenius operator $\frob$. If we suppose that \medbreak
\noindent \textbf{(H.HT)} the Hodge--Tate weights of $\mathcal{M}_p$ are either $0$ or $1$,\medbreak
\noindent the filtration takes the form
$$
\Fil^i \mathbb{D}_{\cris,v}(T) = \begin{cases} 0 & \text{if $i \geq 1$,} \\ \mathbb{D}_{\cris,v}(T) & \text{if $i \leq -1$.} \end{cases}
$$
Note that $\mathbb{D}_{\cris,v}(V) = \mathbb{D}_{\cris,v}(T) \otimes_{\mathcal{O}_{K_v}} K_v$. We also make the following assumptions: \medbreak
\noindent \textbf{(H.Frob)} The slopes of the Frobenius on the Dieudonné module $\mathbb{D}_{\cris,v}(\mathcal{M}_p)$ lie inside $[-1,0)$ and that $1$ is not an eigenvalue; \medbreak
\noindent \textbf{(H.P)} $g_+ = g_-$ ($=g$) and $\mathrm{dim}_{K_v} \Fil^0 \mathbb{D}_{\cris,v}(\mathcal{M}_p)=g$.\medbreak
\begin{defn}\label{Hodge-compatible}
Choose an $\mathcal{O}_{K_v}$-basis $\{v_1,\ldots,v_{2g} \}$ of $\mathbb{D}_{\cris,v}(T)$ such that $\{ v_1,\ldots, v_{g} \}$ is a $\mathcal{O}_{K_v}$-basis of $\Fil^0\mathbb{D}_{\cris,v}(T)$. Such a basis is called Hodge-compatible.
\end{defn}
The matrix of $\frob$ with respect to this basis is of the form
$$
C_{\frob,v} = C_v\left[
\begin{array}{c|c}
I_{g} & 0 \\
\hline
0 & \frac{1}{p} I_{g}
\end{array}
\right]
$$
for some $C_v \in \mathrm{GL}_{2g}(\mathcal{O}_{K_v})$ and where $I_g$ is the identity $g \times g$ matrix. \bigbreak


\section{Multi-Signed Selmer groups over  \texorpdfstring{$\Z_p^2$-extension}{Zp2-extension}}\label{sec: multi}

In this section, we define multi-signed Selmer groups for motives $\mathcal{M}_{/K}$ satisfying \textbf{(H.crys)}, \textbf{(H.HT)}, \textbf{(H.Frob)} and \textbf{(H.P)} over $\Z_p^2$-extension of an imaginary quadratic field  generalizing an earlier work of Büyükboduk and Lei \cite{BL21}. \bigbreak

Let $K$ be an imaginary quadratic field where $(p)=\gp \gp^c$ splits. We write $K^\text{cyc}/K$ and $K^\text{ac}/K$ for the cyclotomic and anticyclotomic $\Zp$-extensions contained in $K_\infty$ respectively.

\subsection{Yager module}

Let $L/\Qp$ be a finite unramified extension. For $x \in \mathcal{O}_L$, define 

$$
y_{L/\Qp}(x) \colonequals \sum_{\tau \in \Gal(L/\Qp)} \tau (x) \cdot \tau^{-1} \in \mathcal{O}_L [\Gal(L/\Qp)]. 
$$

Let $S_{L/\Qp}$ be the image of $y_{L/\Qp}$ in $\mathcal{O}_L [\Gal(L/\Qp)]$. In \cite[Section 3.2]{LZ14}, it is shown that there is an isomorphism $y_{F_\infty/\Qp}$ of $\Lambda(\Gamma_\text{ur})$-modules
$$
\varprojlim_{\Qp \subseteq L \subseteq F_\infty} \mathcal{O}_L \cong S_{F_\infty / \Qp}\colonequals \varprojlim_{\Qp \subseteq L \subseteq F_\infty} S_{L/\Qp}
$$
where the inverse limit is taken with respect to the trace maps on the left and the projection maps $\Gal(L^\prime/\Qp) \to \Gal(L/\Qp)$ for $L \subseteq L^\prime$ on the right. By \cite[Proposition 3.2]{LZ14}, $\varprojlim_{\Qp \subseteq L \subseteq F_\infty} \mathcal{O}_L$ is a free $\Lambda(\Gamma_\text{ur})$-module of rank one, thus the Yager module $S_{F_\infty/\Qp}$ is also free of rank one over $\Lambda(\Gamma_\text{ur})$.

\subsection{Interpolation property of Loeffler--Zerbes' big logarithm map }

Let $\{\Omega_{\Qp}\}$ be a basis of the Yager module $S_{F_\infty/\Qp}$. Recall that $T$ is a $G_K$-stable $\Zp$-lattice inside $\mathcal{M}_p$. Let $\mathcal{L}_T^{\infty}$ be the two-variable big logarithm map of Loeffler--Zerbes \cite{LZ14}
$$
\mathcal{L}_T^{\infty}: H^1_\mathrm{Iw}(k_\infty,T) \to \Omega_{\Qp} \cdot \left( \mathcal{H}(\Gamma_\text{cyc})\widehat{\otimes}\Lambda(\Gamma_\text{ur}) \right) \otimes_{\Zp} \Dcris(T).
$$
It is a morphism of $\Lambda(\Gamma_p)$-modules. The completed tensor product $\mathcal{H}(\Gamma_\text{cyc})\widehat{\otimes}\Lambda(\Gamma_\text{ur})$ is isomorphic to $\mathcal{H}(\Gamma_p)$, the set of power series in $\gamma-1$ and $\sigma-1$ with coefficients in $\Qp$ converging on the open unit disk. Thus, one may see the big logarithm map as an application sending elements of the Iwasawa cohomology to two-variables power series tensored with $\Dcris(T)$. For $k_n$ a finite subextension of $k_\infty$, let $\exp_T^\ast : H^1(k_n,T) \to k_n \otimes \mathbb{D}_{\text{cris}}(T)$ be the Bloch--Kato dual exponential map. {We say that the cyclotomic part of a finite order character $\eta:\Gamma_p \to \overline{\Q}_p^\times$ is of conductor $p^{n+1}$ if $\eta$ sends the topological generator $\gamma$ to a primitive $p^n$-th root of unity.} Then, the big logarithm map enjoys the following interpolation property (see \cite[Theorem 4.15]{LZ14}):
\begin{proposition}
Let $x \in H^1_\mathrm{Iw}(k_\infty,T)$. Let $\eta$ be a character on $\Gamma_p$ whose cyclotomic part is of conductor $p^{n+1}>1$. Then we have
$$
\Phi^{-n-1}\mathcal{L}_T^{\infty}(x)(\eta) = \varepsilon(\eta^{-1})\exp_T^\ast(e_\eta x)
$$
where $\varepsilon(\eta^{-1})$ is the $\varepsilon$-factor of $\eta^{-1}$, $e_\eta$ is the idempotent corresponding to $\eta$ and $\Phi$ is the operator which act as the arithmetic Frobenius $\sigma$ on $F_m$ and $\varphi$ on $\Dcris (T)$.
\end{proposition}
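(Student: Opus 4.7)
The plan is to reduce to the one-variable cyclotomic interpolation via the factorization $\Gamma_p = \Gamma_\text{cyc}\times\Gamma_\text{ur}$ and then invoke the standard explicit reciprocity law. I would first use that the Yager module $S_{F_\infty/\Qp}$ is free of rank one over $\Lambda(\Gamma_\text{ur})$ with basis $\Omega_{\Qp}$, giving a canonical isomorphism $\Lambda(\Gamma_p)\cong \Lambda(\Gamma_\text{cyc})\widehat{\otimes}\Lambda(\Gamma_\text{ur})$ through which $\mathcal{L}_T^\infty$ can be decomposed. For $\eta = \eta_\text{cyc}\cdot\eta_\text{ur}$, evaluating the $\Lambda(\Gamma_\text{ur})$-component against $\Omega_{\Qp}$ picks out an unramified twist of $x$, reducing the claim to an analogous evaluation along $k_\text{cyc}$ for the twisted class $e_{\eta_\text{ur}}x$ viewed over the relevant unramified subfield $F_n$.

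Next, I would carry out the cyclotomic interpolation using Wach modules. The class $x$ corresponds via Fontaine's equivalence and the Wach module isomorphism $H^1_\mathrm{Iw}(k_\infty,T)\cong \mathbb{N}(T)^{\psi=1}$ (in the appropriate unramified-extended form) to an explicit $(\varphi,\Gamma)$-compatible element; the image under $\mathcal{L}_T^\infty$ is obtained by applying the operator $\frac{1}{\log(1+\pi)}(1-\varphi)$, or equivalently Perrin-Riou's big logarithm, landing in $\mathcal{H}(\Gamma_\text{cyc})\otimes\Dcris(T)$. Specializing at $\eta_\text{cyc}$ of conductor $p^{n+1}>1$ and then evaluating via the Mellin transform lands the output in $F_n\otimes k_n\otimes\Dcris(T)$; one then identifies this image, after the twist $\Phi^{-n-1}$ that undoes the Frobenius built into the Wach module construction, with $\exp_T^\ast(e_{\eta_\text{cyc}}x)$ by Kato's explicit reciprocity law (Colmez's reformulation via $(\varphi,\Gamma)$-modules gives the cleanest statement). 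Combining both directions of specialization produces $\exp_T^\ast(e_\eta x)$ on the right-hand side.

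The most delicate step, and the main obstacle, is tracking the normalizing constants so that the Gauss-sum-like contribution from evaluating the Yager basis $\Omega_{\Qp}$ at $\eta_\text{ur}$, together with the Fontaine $\varepsilon$-factor arising when passing from $\eta_\text{cyc}$ to $e_{\eta_\text{cyc}}$ in the Mellin transform, assemble into exactly $\varepsilon(\eta^{-1})$ as opposed to $\varepsilon(\eta)$ or a related quantity. This bookkeeping, done carefully in \cite[\S4]{LZ14}, is where one must be careful with conventions for $\Phi$ acting via the arithmetic (rather than geometric) Frobenius and with the duality between $\eta$ and $\eta^{-1}$ built into the dual exponential map. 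Once the normalizations match, the formula follows from the standard reciprocity input.
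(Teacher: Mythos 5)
The paper does not prove this proposition itself: it is stated with a bare citation to Theorem 4.15 of Loeffler--Zerbes \cite{LZ14}. Your proposal instead reconstructs the argument underlying that citation---splitting $\Gamma_p$ into cyclotomic and unramified factors, using the rank-one structure of the Yager module to isolate the unramified direction, applying the Wach-module/Perrin-Riou big-logarithm machinery and Kato's explicit reciprocity law in the cyclotomic direction, and assembling the $\varepsilon$-factor from the Gauss-sum contribution of $\Omega_{\Qp}$ together with the cyclotomic Mellin-transform normalization---which is indeed the strategy carried out in \S 4 of \cite{LZ14}, as you yourself note. So the mathematical content matches the cited source; the paper's ``proof'' merely buys brevity by treating the result as a black box, while your sketch explains why the formula should hold. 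The one place you correctly flag but do not resolve---tracking that the product of normalizing constants is exactly $\varepsilon(\eta^{-1})$ and not $\varepsilon(\eta)$, and that $\Phi^{-n-1}$ compensates precisely the Frobenius twist built into both the Wach-module construction and the unramified evaluation---is genuinely the only nontrivial bookkeeping, and for a statement the paper quotes verbatim it is reasonable to leave it at the level of citation.
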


For simplicity, let us suppose that the dimension of $\Dcris(T)$ as a $\Zp$-module is even as it will be the case for the applications we have in mind. Write $2g$ for the dimension of $\Dcris(T)$ as a $\Zp$-module where $g$ is some strictly positive integer. Choose $\{v_i\}_{i=1}^{2g}$ a Hodge-compatible $\Zp$-basis of $\Dcris(T)$ as in section \ref{Hodge theory}. 
Let $L$ be a finite unramified extension of $\Qp$ and let $L_\mathrm{cyc}$ denote the cyclotomic $\Zp$-extension of $L$. Then, in \cite{buyukboduklei0}, the authors show the existence of one-variable Coleman maps
$$
\col_{T,L,i}:H^1_\mathrm{Iw}(L_\mathrm{cyc},T) \to \mathcal{O}_L\otimes \Lambda(\Gamma_{\mathrm{cyc}})
$$
for $1 \leq i \leq 2g$. Those maps are compatible with the corestriction maps $H^1_\mathrm{Iw}(F_{m,\mathrm{cyc}},T)\to H^1_\mathrm{Iw}(F_{m-1,\mathrm{cyc}},T)$ and the trace maps $\mathcal{O}_{F_m}\otimes \Lambda(\Gamma_\mathrm{cyc}) \to \mathcal{O}_{F_{m-1}}\otimes \Lambda(\Gamma_\mathrm{cyc})$ (see \cite[Appendix A.1]{LeiZhao} where the Coleman maps $\col_{T,F_m,i}$ are denoted by $\col_{\sharp/\flat,F_m}$ instead). We define two-variable Coleman maps by taking the inverse limit of the $\col_{T,F_m,i}$ as $F_m$ runs through the finite extensions between $F_\infty$ and $\Qp$. In order to get a family of maps landing in $\Lambda(\Gamma_p)$, we further compose it with $y_{F_\infty/\Qp}$. More precisely, the two-variable Coleman maps are defined by
\begin{align*}
\col_{T,i}^\infty:H^1_\mathrm{Iw}(k_\infty,T) &\to \Omega_{\Qp}\cdot \Lambda(\Gamma_p) \\
(z_m) & \mapsto (y_{F_\infty/\Qp}\otimes 1) \circ \varprojlim_{F_m} \col_{T,F_m,i}(z_m).
\end{align*}
where $(z_m) \in \varprojlim_{F_m} H^1_\mathrm{Iw}(F_{m,\mathrm{cyc}},T)$. By identifying $\Omega_{\Qp} \cdot \Lambda(\Gamma_p)$ with $\Lambda(\Gamma_p)$, we omit $\Omega_{\Qp}$ from the notation and see $\col_{T,i}^\infty$ as taking value in $\Lambda(\Gamma_p)$. By combining \cite[Theorem 2.13]{buyukboduklei0} and \cite[Theorem 4.7 (1)]{LZ14}, we see that these Coleman maps decompose the big logarithm map
$$
\mathcal{L}_T^{\infty} = (v_1,\ldots, v_{2g}) \cdot M_{T} \cdot \begin{bmatrix} \col_{T,1}^\infty \\ \vdots \\ \col_{T,2g}^\infty \end{bmatrix}.
$$

The matrix $M_{T}$ is called a logarithmic matrix and is defined in the following way: Let $\Phi_{p^n}(1+X)$ denote the $p^n$th cyclotomic polynomial. For $n \geq 1$, first define the matrices
$$
C_n \colonequals \left[
\begin{array}{c|c}
I_{g} & 0 \\
\hline
0 & \Phi_{p^n}(1+X) I_{g}
\end{array}
\right]
C^{-1}
$$
and $M_n \colonequals (C_\frob)^{n+1}C_n \cdots C_1$ where $C$ and $C_\varphi$ are defined as in section \ref{Hodge theory} with $v=p$. In \cite{buyukboduklei0}, it is shown that the sequence $\{ M_n \}$ converges to a $2g \times 2g$ matrix with entries in $\mathcal{H}(\Gamma_\text{cyc})$ which we call $M_{T}$. Let $\gq \in \{\gp, \gp^c\}$. If we include the primes $\gp$ and $\gp^c$ in our notation, then we have the Coleman maps
\begin{equation}
\col_{T,\gq,i}^\infty: H^1(K_\gq, T \otimes \Lambda(\Gamma_\gq)^\iota) \rightarrow \Z_p[[\Gamma_p]]
\end{equation}
for $i\in \{1,...,2g\}$. They satisfy 
$$
\mathcal{L}_{T,\gq}^{\infty} = (v_1,\ldots, v_{2g}) \cdot M_{T,\gq} \cdot \begin{bmatrix} \col_{T,\gq, 1}^\infty \\ \vdots \\ \col_{T,\gq,2g}^\infty \end{bmatrix}
.$$ For $\gq=\gp$, $M_{T,\gp}=\lim_{r \rightarrow \infty}M_{\gp,r}$. Here
 $M_{\gp,r} \colonequals (C_{\frob,\gp})^{r+1}C_{\gp,r} \cdots C_{\gp,1}$,
$$
C_{\gp,r} \colonequals \left[
\begin{array}{c|c}
I_{g} & 0 \\
\hline
0 & \Phi_{p^r}(\gamma_\gp) I_{g}
\end{array}
\right]
C_\gp^{-1}.
$$
For $\gq =\gp^c$ we have  expressions $M_{T,\gp^c}=\lim_{s \rightarrow \infty}M_{\gp^c,s}$, where $ M_{{\gp^c},s} \colonequals (C_{\frob,\gp^c})^{s+1}C_{\gp^c,s} \cdots C_{{\gp^c},1}$,
$$
C_{\gp^c,s} \colonequals \left[
\begin{array}{c|c}
I_{g} & 0 \\
\hline
0 & \Phi_{p^s}(\gamma_{\gp^c}) I_{g}
\end{array}
\right]
C_{\gp^c}^{-1}.
$$
For any $r\geq 1$, $r$ an integer, define $H_{\gp,r}=C_{\gp,r}\cdots C_{\gp,1}$. Similarly, for any integer $s \geq 1$, we define $H_{\gp^c,s}=C_{\gp^c,s}\cdots C_{\gp^c,1}$. 
\newline Let $H_{r,s}$ be the block diagonal matrix where the blocks on the diagonal are given by $H_{\gp,r}$ and $H_{\gp^c,s}$. Consider the tuples $\underline{I}=(I_\gp,I_{\gp^c})$ and $\underline{J}=(J_\gp,J_{\gp^c})$ where $I_\gq \subseteq \{1,...,2g\}$ and $J_\gq \subseteq \{1,...,2g\}$ for $\gq \in \{\gp,\gp^c\}$. Let us suppose that $|I_\gp| +|I_{\gp^c}|=2g$ and $|J_\gp| +|J_{\gp^c}|=2g$. \newline We define $H_{\underline{I}, \underline{J},r,s}$ to be the $(\underline{I},\underline{J})^{th}$-minor of $H_{r,s}$. {We also define $\underline{I}_0 \colonequals (I_{\gp,0},I_{\gp^c,0})$ where $I_{\gq,0} = \{1,\ldots,g\}$.}

\subsection{Definitions and properties}\label{Selmer groups}
The multi-signed Selmer group can be defined without the assumption that $K(1) \cap K_\infty=K$. For this subsection only, we work in more generality and drop this assumption. Suppose that there are $p^t$ primes above $\gp$ and $\gp^c$ in $K_\infty$. Fix a choice of coset representatives $\gamma_1,\ldots, \gamma_{p^t}$ and $\delta_1,\ldots,\delta_{p^t}$ for $\Gamma/\Gamma_\gp$ and $\Gamma/\Gamma_{\gp^c}$ respectively.
Since $p$ splits in $K$, we can identify $K_\gq$ with $\Qp$ and $\Gamma_\gq$ with $\Gamma_p$. Consider the ``semi-local" decomposition coming from Shapiro's lemma
$$
H^1(K_\gp, T\otimes \Lambda^\iota) = \bigoplus_{j=1}^{p^t} H^1(K_\gp, T \otimes \Lambda(\Gamma_\gp)^\iota)\cdot \gamma_j \cong \bigoplus_{v | \gp}H^1_\text{Iw}(K_{\infty,v},T)
$$
where $v$ runs through the primes above $\gp$ in $K_\infty$. By choosing a Hodge-compatible basis of $\mathbb{D}_{\mathrm{cris},\gp}(T)$, define the Coleman map for $T$ at $\gp$ by
\begin{align*}
\col_{T,\gp,i}^{k_\infty}: H^1(K_\gp,T \otimes \Lambda^\iota) &\to \Lambda(\Gamma) \\
x=\sum_{j=1}^{p^t}x_j \cdot \gamma_j &\mapsto \sum_{j=1}^{p^t} \col_{T,\gp,i}^\infty(x_j)\cdot \gamma_j
\end{align*}
for all $1 \leq i \leq 2g$.

Let $\mathcal{L}_{T,\gp}^{k_\infty}=\oplus_{j=1}^{p^t}\mathcal{L}_{T,\gp}^{\infty}\cdot \gamma_j$. Define $\col_{T,\gp^c,i}^{k_\infty}$ and $\mathcal{L}_{T,\gp^c}^{k_\infty}$ in an analogous way. Let $J_\gq$ denote a subset of $\{1,\ldots,2g\}$ and let 
\begin{align*}
\col_{T,J_\gq} : H^1(K_\gq,T \otimes \Lambda^\iota) & \to \bigoplus_{i=1}^{|J_\gq|} \Lambda(\Gamma) \\
z & \mapsto (\col^{k_\infty}_{T,\gq,i}(z))_{i \in J_\gq}.
\end{align*}
Tate's local pairing induces a pairing
$$
\bigoplus_{v|\gq}H^1_\mathrm{Iw}(K_{\infty,v},T) \times \bigoplus_{v|\gq}H^1(K_{\infty,v},T^\dagger) \to \Qp/\Zp
$$
for all places $v$ of $K_\infty$ above $\gq$. We define $H^1_{J_\gq}(K_{\infty,\gq},T^\dagger) \subseteq \bigoplus_{v | \gq}H^1(K_{\infty,v},T^\dagger)$ as the orthogonal complement of $\ker \col_{T,J_\gq}$ under the previous pairing. If $L$ is a finite extension of $K_\gq$, define
$$
H^1_\mathrm{ur}(L,\mathcal{M}_p^\ast(1)) \colonequals \ker \left( H^1(L,\mathcal{M}_p^\ast(1)) \to H^1(L_\mathrm{ur},\mathcal{M}_p^\ast(1)) \right)
$$
where $L_\mathrm{ur}$ is the maximal unramified extension of $L$. Let $H^1_\mathrm{ur}(L,T^\dagger)$ be the image of $H^1_\mathrm{ur}(L,\mathcal{M}_p^\ast(1))$ under the natural map $H^1(L,\mathcal{M}_p^\ast(1)) \to H^1(L, T^\dagger)$. Let
$$
H^1_\mathrm{ur}(K_{\infty,w}, T^\dagger) \colonequals \varinjlim_{\Qp \subseteq L \subseteq K_{\infty,w}} H^1_\mathrm{ur}(L,T^\dagger)
$$
where $w$ is a place in $K_\infty$ and $L$ runs through finite extensions of $\Qp$ contained in $K_{\infty,w}$. Let $\underline{J} \colonequals (J_\gp, J_{\gp^c})$ where as before $J_\gp$ and $J_{\gp^c}$ are subsets of $\{1,\ldots,2g\}$ such that $|J_\gp|+|J_{\gp^c}|=2g$. Fix $\Sigma$ a finite set of prime of $K$ containing the primes above $p$, the archimedean primes and the primes of ramification of $T^\dagger$. Let $K_\Sigma$ be the maximal extension of $K$ unramified outside $\Sigma$. Let $\Sigma^\prime$ be the set of primes of $K_\infty$ lying above the primes in $\Sigma$. If $M$ is any $\Gal(K_\Sigma /K_\infty)$-module, we denote $H^1(K_\Sigma / K_\infty,M)$ by $H^1_\Sigma(K_\infty,M)$.

\begin{defn}\label{Selmer def}
Let
$$
\mathcal{P}_{\Sigma,\underline{J}}(T^\dagger/K_\infty) \colonequals \prod_{w \in \Sigma^\prime, w \nmid p} \frac{H^1(K_{\infty,w},T^\dagger)}{H^1_\mathrm{ur}(K_{\infty,w},T^\dagger)} \times \prod_{\gq | p} \frac{\bigoplus_{v|\gq} H^1(K_{\infty,v},T^\dagger)}{H^1_{J_\gq}(K_{\infty,\gq},T^\dagger)}.
$$
The $\underline{J}$-Selmer group of $T^\dagger$ over $K_\infty$ is
$$
\Sel_{\underline{J}}(T^\dagger/K_\infty) \colonequals \ker \left( H^1_\Sigma(K_\infty,T^\dagger) \to \mathcal{P}_{\Sigma,\underline{J}}(T^\dagger/K_\infty) \right).
$$
\end{defn}

We define the fine Selmer group over $K_\infty$ as $$\Sel_p^0(T^\dagger/K_\infty):= \Ker\Big(H^1_\Sigma(K_\infty,T^\dagger) \rightarrow \prod_{v}H^1(K_{\infty,v},T^\dagger)\Big)$$
where $v$ runs through all the places of $K_\infty$. 
{\begin{remark}
In the literature, $\Sel_p^0(T^\dagger/K_\infty)$ is sometimes referred as the strict Selmer group instead.
\end{remark}}Let $\mathcal{X}_{\underline{J}}(T^\dagger /K_\infty)$ and $\mathcal{X}_0(T^\dagger /K_\infty)$ denote the Pontryagin dual of the $\underline{J}$-Selmer group and fine Selmer group respectively. {Recall that $\Sel_{\underline{J}}(T^\dagger/K_\infty)$ (resp. $\Sel_p^0(T^\dagger/K_\infty)$) is said to be $\Lambda$-cotorsion if $\mathcal{X}_{\underline{J}}(T^\dagger /K_\infty)$ (resp. $\mathcal{X}_0(T^\dagger /K_\infty)$) is a torsion module over $\Lambda$.}
\begin{remark}\label{changebasis}
\textbf{Change of basis.} The Selmer groups $\Sel_{\underline{J}}(T^\dagger/K_\infty)$  depend both on the indexing set $\underline{J}$ and the choice of the Hodge-compatible basis of the Dieudonné module $\mathbb{D}_{\mathrm{cris},\gq}(T)$  (defined after \textbf{(H.P)}). A change of Hodge-compatible basis will affect the Coleman maps $\col_{T,\gq,i}^\infty$
in the same way as described in \cite[Section 2.4]{buyukboduklei0}.
\end{remark}
\begin{remark}
Under the hypothesis $K(1) \cap K_\infty=K$, there is a unique prime above $\gq$ in $K_\infty$. In this case,
$$
H^1(K_\gq, T \otimes \Lambda^\iota ) \cong H^1_\text{Iw}(K_{\infty,v},T)
$$
where $v$ is the unique prime above $\gq$. Furthermore, $\Gamma=\Gamma_\gq$ and we get Coleman maps
$$
\col_{T,\gq,i}^{k_\infty}: H^1_{\text{Iw}}(K_{\infty,v},T) \to \Lambda.
$$
\end{remark}
\subsection{ Poitou--Tate exact sequence}

By \cite[Proposition A.3.2]{perrinriou95}, we have the following Poitou--Tate exact sequence
\begin{equation}\label{Poitou-Tate}
H^1_{\text{Iw}}(K_\infty,T) \to \frac{H^1(K_\gp,T\otimes \Lambda^\iota)}{\ker \col_{T,J_\gp}} \oplus \frac{H^1(K_{\gp^c},T\otimes \Lambda^\iota)}{\ker \col_{T,J_{\gp^c}}} \to \mathcal{X}_{\underline{J}}(T^\dagger /K_\infty) \to \mathcal{X}_0(T^\dagger /K_\infty)\to 0,
\end{equation}
where $H^i_{\text{Iw}}(K_\infty,T)$ is the inverse limit of $H^i(K_\Sigma/K_n,T)$ where $K \subseteq K_n \subseteq K_\infty$ {and the transition maps are the corestriction maps}.


\section{Coleman maps control the Mordell--Weil ranks}\label{sec: mordell}

Let $A$ be an abelian variety over $K$ of dimension $g$ having supersingular reduction at all the primes in $K$ above $p$. Let $T=T_p(A)$ be the Tate module of $A$ and $V=T_p(A) \otimes \Qp$. Then $V=\mathcal{M}_p$ satisfies the hypotheses \textbf{(H.crys)}, \textbf{(H.HT)}, \textbf{(H.Frob)} and \textbf{(H.P)}. Furthermore, $T^\dagger = A^\vee [p^\infty]$ where $A^\vee$ denotes the dual abelian variety.
\subsection{Ranks of Iwasawa modules}

We say that $V$ satisfies weak Leopoldt's conjecture $\mathrm{Leop}(K_\infty,V)$ if the following equivalent statements are true.
\begin{proposition} The following statements are equivalent:
\begin{enumerate}
    \item $H^2_{\mathrm{Iw}}(K_\infty,T)$ is $\Lambda$-torsion,
    \item $H^2(K_\Sigma/K_\infty, T \otimes \Q_p/\Z_p)=0.$
\end{enumerate}
\end{proposition}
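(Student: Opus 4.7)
This is a classical reformulation of the weak Leopoldt conjecture for $V$ over the $\Z_p^2$-extension $K_\infty/K$, and my plan is to reproduce the standard proof---appearing for instance as Proposition~1.3.2 in Perrin-Riou's monograph \emph{Fonctions $L$ $p$-adiques des repr\'esentations $p$-adiques}, or in Jannsen's paper on Iwasawa modules up to isomorphism---in outline. The two main ingredients are a short exact sequence manipulation and a rank identity relating Iwasawa cohomology to continuous Galois cohomology.

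I would begin by taking the long exact sequence in continuous $G_{K_\Sigma/K_\infty}$-cohomology associated to $0 \to T \to V \to A \to 0$, where $V = T \otimes_{\Z_p} \Q_p$ and $A = T \otimes_{\Z_p} \Q_p/\Z_p$. Because $K$ is imaginary quadratic and $p$ is odd, $G_{K_\Sigma/K_\infty}$ has $p$-cohomological dimension at most $2$, so $H^3(K_\Sigma/K_\infty, -)$ vanishes on $p$-primary torsion modules. This yields a surjection $H^2(K_\Sigma/K_\infty, V) \twoheadrightarrow H^2(K_\Sigma/K_\infty, A)$ and identifies $H^2(K_\Sigma/K_\infty, V)$ with $H^2(K_\Sigma/K_\infty, T) \otimes_{\Z_p} \Q_p$.

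The heart of the argument is the comparison between $H^2_{\mathrm{Iw}}(K_\infty, T)$ and $H^2(K_\Sigma/K_\infty, A)$. Shapiro's lemma for profinite groups gives $H^i_{\mathrm{Iw}}(K_\infty, T) \cong H^i(K_\Sigma/K, T \otimes_{\Z_p} \Lambda^\iota)$ as $\Lambda$-modules, and the Hochschild--Serre spectral sequence for the normal subgroup $G_{K_\Sigma/K_\infty} \triangleleft G_{K_\Sigma/K}$---combined with the vanishing of higher cohomology coming from the cohomological-dimension bound---yields the rank identity
$$\rank_\Lambda H^2_{\mathrm{Iw}}(K_\infty, T) \;=\; \mathrm{corank}_\Lambda H^2(K_\Sigma/K_\infty, A).$$
The same input also shows that the Pontryagin dual of $H^2(K_\Sigma/K_\infty, A)$ is $\Lambda$-torsion-free, so that its $\Lambda$-corank vanishes if and only if $H^2(K_\Sigma/K_\infty, A) = 0$ itself.

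Combining the steps, statement~(1)---i.e., $H^2_{\mathrm{Iw}}(K_\infty, T)$ being $\Lambda$-torsion---is equivalent to the vanishing of the left-hand side of the rank identity, hence of the right-hand side, hence of $H^2(K_\Sigma/K_\infty, A)$, which is statement~(2). Independence of the $\Z_p$-lattice $T \subset V$ is automatic since both conditions are invariant under commensurability of lattices. The main obstacle is the rank identity together with the torsion-freeness of the Pontryagin dual; both are classical but notationally heavy (involving careful bookkeeping of spectral sequences and $\varprojlim^1$-terms), and in practice the cleanest route is to cite Perrin-Riou or Jannsen directly.
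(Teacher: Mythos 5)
The paper does not actually give a proof of this proposition: it simply writes ``See~\cite[Proposition~A.5]{LZ14}.'' So there is no proof in the paper to compare against; your proposal essentially reproduces (in outline) the argument behind that cited result, and in the end you also suggest citing Perrin-Riou or Jannsen. In that sense the two routes coincide: both defer to the same body of standard structure theory.

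On the substance of your sketch: your opening moves (the long exact sequence from $0\to T\to V\to A\to 0$, the fact that $\cd_p\bigl(\Gal(K_\Sigma/K_\infty)\bigr)\le 2$, and the resulting surjection $H^2(K_\Sigma/K_\infty,V)\twoheadrightarrow H^2(K_\Sigma/K_\infty,A)$) are correct, and the reduction to the rank identity
$\rank_\Lambda H^2_{\mathrm{Iw}}(K_\infty,T)=\mathrm{corank}_\Lambda H^2(K_\Sigma/K_\infty,A)$
together with the $\Lambda$-torsion-freeness of $H^2(K_\Sigma/K_\infty,A)^\vee$ is exactly the standard skeleton. One place where you are a bit too quick is the claim that ``the same input'' gives the $\Lambda$-torsion-freeness: the cohomological-dimension bound by itself only yields $p$-divisibility of $H^2(K_\Sigma/K_\infty,A)$, i.e.\ that its Pontryagin dual has no nonzero $\Z_p$-torsion, which is strictly weaker than being $\Lambda$-torsion-free (e.g.\ $\Lambda/(\gamma_\gp-1)$ is $\Z_p$-torsion-free yet $\Lambda$-torsion). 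The genuine argument passes through Jannsen's $\mathrm{Ext}_\Lambda(\cdot,\Lambda)$ spectral sequence (or the corresponding descent/codescent exact sequences in Perrin-Riou), which identifies the top-degree dual with a subobject of $\Hom_\Lambda(\cdot,\Lambda)$ and hence torsion-free. Since you ultimately offload this to the references, the proposal is sound as a sketch; just don't frame the torsion-freeness as a consequence of the cohomological-dimension bound alone.
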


\begin{proof}
See \cite[Proposition A.5]{LZ14}.
\end{proof}
We recall the fine Selmer group over $K_\infty$ is given by $$\Sel_p^0(A^\vee/K_\infty)= \Ker\Big(H^1_\Sigma(K_\infty,A^\vee[p^\infty]) \rightarrow \prod_{v}H^1(K_{\infty,v},A^\vee[p^\infty])\Big).$$
Let $\mathcal{X}^0$ be the Pontryagin dual of this fine Selmer group over $K_\infty$. {Define
$$
\mathcal{I} \colonequals \{ \underline{J}=(J_\gp,J_{\gp^c}) : |J_\gp|+|J_{\gp^c}|=2g \},
$$}
where $J_\gq \subseteq \{1,...,2g\}$ for $\gq \in \{\gp,\gp^c\}$.
\begin{lemma}\label{lem1}
Suppose that there exists a $\underline{J} \in \mathcal{I}$ such that $\Sel_{\underline{J}}(A^\vee/K_\infty)$ is $\Lambda$-cotorsion. Then $\HIw(K_\infty,T)$ is of rank $2g$ over $\Lambda$.
\end{lemma}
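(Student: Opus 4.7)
The plan is to use the four-term Poitou-Tate exact sequence \eqref{Poitou-Tate} together with the global Euler-Poincaré characteristic formula for Iwasawa cohomology over the $\Zp^2$-extension $K_\infty/K$.

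First I would compute the $\Lambda$-rank of the middle term of \eqref{Poitou-Tate}. For each $\gq \in \{\gp, \gp^c\}$, the factorization of the big logarithm map $\mathcal{L}_{T,\gq}^\infty$ through the Coleman maps via the logarithmic matrix $M_{T,\gq}$, combined with the non-vanishing of $\det M_{T,\gq}$ and the known $\Lambda$-rank $2g$ of $H^1(K_\gq, T\otimes \Lambda^\iota)$, shows that the bundled Coleman map $(\col_{T,\gq,i}^{k_\infty})_{i=1}^{2g}: H^1(K_\gq, T\otimes \Lambda^\iota) \to \Lambda^{2g}$ has $\Lambda$-torsion kernel and cokernel. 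Consequently the projection $\col_{T,J_\gq}$ becomes surjective after inverting $\Lambda$-torsion, so its image in $\Lambda^{|J_\gq|}$ has full rank $|J_\gq|$, and $H^1(K_\gq, T\otimes \Lambda^\iota)/\ker \col_{T,J_\gq}$ has $\Lambda$-rank $|J_\gq|$. Summing over $\gq$ gives rank $|J_\gp|+|J_{\gp^c}| = 2g$ for the middle term. Since by hypothesis $\mathcal{X}_{\underline{J}}$ is $\Lambda$-torsion, exactness of \eqref{Poitou-Tate} implies that the cokernel of $\HIw(K_\infty,T) \to \mathrm{middle}$ injects into $\mathcal{X}_{\underline{J}}$ and is thus $\Lambda$-torsion, yielding $\mathrm{rank}_\Lambda \HIw(K_\infty,T) \geq 2g$.

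For the matching upper bound I would deduce $\Lambda$-torsion-ness of $H^2_{\mathrm{Iw}}(K_\infty,T)$ (equivalently, $\mathrm{Leop}(K_\infty, V)$) from the hypothesis. The surjection in \eqref{Poitou-Tate} shows $\mathcal{X}_0$ is a quotient of $\mathcal{X}_{\underline{J}}$ and hence $\Lambda$-torsion. A standard Iwasawa-theoretic Poitou-Tate long exact sequence relates the Pontryagin dual of the fine Selmer group to $H^2_{\mathrm{Iw}}(K_\infty,T)$ up to error terms controlled by $\bigoplus_v H^2_{\mathrm{Iw}}(K_{\infty,v},T)$ and $H^0(K_\infty, T^\dagger)^\vee$, both of which are $\Lambda$-torsion in the supersingular setup under consideration; this gives the equivalence between $\Lambda$-cotorsion-ness of the fine Selmer group and $\mathrm{Leop}(K_\infty,V)$. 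The global Euler-Poincaré characteristic formula for the $\Zp^2$-extension $K_\infty/K$ of the imaginary quadratic field $K$ (with $r_2(K) = 1$) then reads
\[
\mathrm{rank}_\Lambda \HIw(K_\infty,T) - \mathrm{rank}_\Lambda H^2_{\mathrm{Iw}}(K_\infty,T) = r_2(K) \cdot \dim_{\Qp} V = 2g,
\]
which combined with the torsion-ness of $H^2_{\mathrm{Iw}}(K_\infty,T)$ yields $\mathrm{rank}_\Lambda \HIw(K_\infty,T) = 2g$.

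The main obstacle is to verify rigorously the step from $\Lambda$-cotorsion of the fine Selmer group to weak Leopoldt over the $\Zp^2$-extension; the cyclotomic-$\Zp$ version is classical (Perrin-Riou), and extending it to the two-variable setting requires checking that the error terms in the relevant Poitou-Tate comparison are $\Lambda$-torsion. The lower-bound rank computation for the middle term and the application of the Euler-Poincaré formula are then routine.
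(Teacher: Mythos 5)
Your proof follows essentially the same route as the paper's: deduce $\Lambda$-torsion of the fine Selmer dual from the Poitou--Tate sequence \eqref{Poitou-Tate}, upgrade this to $\Lambda$-torsion of $H^2_{\mathrm{Iw}}(K_\infty,T)$ (i.e.\ weak Leopoldt), and conclude $\mathrm{rank}_\Lambda H^1_{\mathrm{Iw}}(K_\infty,T) = 2g$ from the Euler--Poincar\'e-type rank formula for the totally imaginary field $K$. That rank formula (your displayed equality) is exactly \cite[Corollary A.8]{LZ14}, which the paper cites.

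Two remarks. First, the one step you flag as the ``main obstacle'' -- passing from fine Selmer cotorsion to weak Leopoldt over the $\Zp^2$-extension -- is precisely what the paper settles by invoking \cite[Proposition A.3.2]{perrinriou95}, which gives the left-exact sequence
$$ 0 \rightarrow \Sel_p^0(A^\vee/K_\infty)^\vee \rightarrow H^2_{\mathrm{Iw}}(K_\infty,T) \rightarrow \prod_{v\in\Sigma}H^2_{\mathrm{Iw}}(K_{\infty,v},T), $$
with the outer term $\Lambda$-torsion by \cite[Theorem~A.2]{LZ14}; there is no extra $H^0$ error term to control. Second, your entire first paragraph (the lower bound via Coleman maps, logarithmic matrices, and the nonvanishing of $\det M_{T,\gq}$) is redundant: once $H^2_{\mathrm{Iw}}(K_\infty,T)$ is torsion, \cite[Corollary~A.8]{LZ14} already yields the equality $\mathrm{rank}_\Lambda H^1_{\mathrm{Iw}}(K_\infty,T) = 2g$, both directions at once. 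Dropping that paragraph brings your argument into line with the paper and avoids the delicate claim that the $\mathcal{H}$-valued matrices $M_{T,\gq}$ force the bundled Coleman map to have $\Lambda$-torsion kernel and cokernel, which as stated would need its own justification.
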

\begin{proof}
By the Poitou--Tate exact sequence \eqref{Poitou-Tate}, $\Sel_p^0(A^\vee/K_\infty)^\vee$ is a quotient of $\Sel_{\underline{J}}(A^\vee/K_\infty)^\vee$ and hence is $\Lambda$-torsion. By \cite[Proposition A.3.2]{perrinriou95}, we have the exact sequence 
$$ 0 \rightarrow \Sel_p^0(A^\vee/K_\infty)^\vee \rightarrow H^2_{\mathrm{Iw}}(K_\infty,T) \rightarrow \prod_{v \in \Sigma}H^2_{\mathrm{Iw}}(K_{\infty,v},T). $$
Now $H^2_{\mathrm{Iw}}(K_{\infty,v},T)$ is $\Lambda$-torsion for all $v $ (see \cite[Theorem A.2]{LZ14}). Therefore $H^2_{\mathrm{Iw}}(K_\infty,T)$ is $\Lambda$-torsion and hence $\mathrm{Leop}(K_\infty,V)$ is true. Now since $K$ is totally imaginary, \cite[Corollary A.8]{LZ14} shows that the rank over $\Lambda$ of $\HIw(K_\infty,T)$ is $2g$.
\end{proof}
{We now discuss a partial result toward the converse of lemma \ref{lem1}. Suppose that $\mathrm{Leop}(K_\infty,V)$ is true. Then $H^1_{\mathrm{Iw}}(K_\infty,T)$ is of rank $2g$ over $\Lambda$ by \cite[Theorem A.4]{LZ14}. Assume that the images of $\col_{T,\gq,i}^{k_\infty}$ ($1 \leq i \leq 2g, \gq \in \{\gp,\gp^c\}$) are nonzero. Thus, if $v$ and $w$ are the unique places above $\gp$ and $\gp^c$ in $K_\infty$ respectively,
\begin{equation}\label{Pontryagin dual}
\frac{H^1_{\mathrm{Iw}}(K_{\infty,v},T) \bigoplus H^1_{\mathrm{Iw}}(K_{\infty,w},T)}{\ker \col_{T,J_{\gp}}\bigoplus \ker \col_{T,J_{\gp^c}}}
\end{equation}
is of rank $2g$ since $|J_\gp|+|J_{\gp^c}|=2g$ and $H^1_{\mathrm{Iw}}(K_{\infty,v},T) \bigoplus H^1_{\mathrm{Iw}}(K_{\infty,w},T)$ has rank $4g$ by \cite[Theorem A.2]{LZ14}. By \cite[proof of proposition 6]{greenberg89}, $\frac{H^1(K_{\infty,w},A^\vee[p^\infty])}{H^1_{\mathrm{ur}}(K_{\infty,w},A^\vee[p^\infty])}$ is $\Lambda$-cotorsion for $w\in \Sigma^\prime$ such that $w \nmid p$. We deduce, by local Tate duality, that the $\Lambda$-module \eqref{Pontryagin dual} has the same rank as the Pontryagin dual of $\mathcal{P}_{\Sigma,\underline{J}}(A^\vee[p^\infty]/K_\infty)$. Thus $\mathcal{P}_{\Sigma,\underline{J}}(A^\vee[p^\infty]/K_\infty)$ has $\Lambda$-corank $2g$ and it follows that $\Sel_{\underline{J}}(A^\vee/K_\infty)$ is the kernel of a morphism between two modules of the same corank. Being such a kernel is a necessary (but not sufficient) condition for $\Sel_{\underline{J}}(A^\vee/K_\infty)$ to be $\Lambda$-cotorsion.}

\begin{remark}\label{rem:4.3}
{Suppose that $A=E$ is an elliptic curve. In that case, the Coleman maps defined in section \ref{Selmer groups} are the same as the $\sharp/\flat$-Coleman maps in \cite{LeiSprung2020}
$$
\col_{\gq}^\sharp, \col_{\gq}^\flat : H^1(K_\gq,T_p(E)\otimes \Lambda^\iota) \to \Lambda(\Gamma).
$$
For $\bullet,\circ \in \{\sharp,\flat\}$, it is then possible to construct signed $p$-adic $L$-functions $\mathfrak{L}_p^{\bullet,\circ}(E/K)$. If we suppose that $\mathfrak{L}_p^{\bullet,\circ}(E/K)$ is nonzero, then \cite[Theorem 3.7]{castella2018iwasawa} shows that the signed Selmer group $\Sel^{\bullet,\circ}(E/K_\infty)$ is $\Lambda$-cotorsion. Moreover, when $a_p=0$, the $\sharp/\flat$-Selmer groups generalise Kim's $\pm/\pm$-Selmer groups \cite{kimdoublysigned}. The cotorsion-ness of those Selmer groups is known unconditionally by \cite[Remark 8.5]{LeiPal19} in the case $\underline{J}=(1,1)$ and $\underline{J}=(2,2)$ corresponding to $+/+$ and $-/-$ Selmer groups.}

\end{remark}

In the next lemma, we give a condition over the cyclotomic extension which will ensure that $\HIw(K_\infty,T)$ is of rank $2g$ over $\Lambda$.
\begin{lemma}\label{lem2}
Suppose that there exists any $\underline{J} \in \mathcal{I}$ such that $\Sel_{\underline{J}}(A^\vee/K^\cyc)$ is $\Z_p[[\Gal(K^\cyc/K)]]$-cotorsion. Then $\HIw(K_\infty,T)$ is of rank $2g$ over $\Lambda$.
\end{lemma}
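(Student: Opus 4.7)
The plan is to run the proof of Lemma \ref{lem1} one level down, at the cyclotomic tower $K^\cyc$, and then propagate weak Leopoldt back up to $K_\infty$ by a Nakayama descent along the anticyclotomic direction.

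\emph{Step 1 (weak Leopoldt at $K^\cyc$).} Apply the Poitou--Tate exact sequence of \cite[Proposition A.3.2]{perrinriou95} with $K^\cyc$ in place of $K_\infty$. Since the hypothesis makes $\Sel_{\underline{J}}(A^\vee/K^\cyc)^\vee$ torsion over $\Lambda(\Gal(K^\cyc/K))$, its quotient $\Sel_p^0(A^\vee/K^\cyc)^\vee$ is likewise torsion. Combined with the torsion-ness of each local $H^2_{\mathrm{Iw}}(K^\cyc_v, T)$ from \cite[Theorem A.2]{LZ14}, the argument of Lemma \ref{lem1} forces $H^2_{\mathrm{Iw}}(K^\cyc, T)$ to be torsion over $\Lambda(\Gal(K^\cyc/K))$, which by \cite[Proposition A.5]{LZ14} is exactly $\mathrm{Leop}(K^\cyc, V)$.

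\emph{Step 2 (descent from $K^\cyc$ to $K_\infty$).} Set $H \colonequals \Gal(K_\infty/K^\cyc) \cong \Z_p$ and fix a topological generator $h \in H$. Since $H$ has $p$-cohomological dimension one, the Hochschild--Serre spectral sequence for $K_\infty/K^\cyc$ applied to $A^\vee[p^\infty]$ collapses to
\begin{equation*}
0 \to H^1\bigl(H,\, H^1(K_\Sigma/K_\infty,\, A^\vee[p^\infty])\bigr) \to H^2(K_\Sigma/K^\cyc,\, A^\vee[p^\infty]) \to H^2(K_\Sigma/K_\infty,\, A^\vee[p^\infty])^H \to 0.
\end{equation*}
The middle term vanishes by Step 1, so the $H$-invariants on the right are zero. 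Let $M$ denote the Pontryagin dual $H^2(K_\Sigma/K_\infty,\, A^\vee[p^\infty])^\vee$, which is a finitely generated $\Lambda$-module by the standard Iwasawa-theoretic finiteness for continuous Galois cohomology. Under Pontryagin duality, the vanishing of $H$-invariants on the discrete side becomes $M = (h-1) M$ on the compact side. Since $(h-1)$ lies in the Jacobson radical of $\Lambda = \Z_p \llbracket \gamma_\gp-1,\, h-1 \rrbracket$, Nakayama's lemma forces $M = 0$, hence $\mathrm{Leop}(K_\infty, V)$ holds.

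\emph{Step 3 (rank formula).} Since $K$ is totally imaginary, \cite[Corollary A.8]{LZ14} converts $\mathrm{Leop}(K_\infty, V)$ into $\mathrm{rank}_\Lambda \HIw(K_\infty, T) = 2g$, as claimed.

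The main obstacle I anticipate is a clean justification of the $\Lambda$-finite generation of $M$ in Step 2, needed to apply Nakayama; this is a routine but somewhat technical consequence of Poitou--Tate duality at each finite layer together with the inverse-limit description of Iwasawa cohomology. Once that is granted, the determinant-trick form of Nakayama for the single element $(h-1) \in \mathrm{rad}(\Lambda)$ is what makes the passage from cyclotomic weak Leopoldt to weak Leopoldt over the full $\Z_p^2$-extension essentially automatic.
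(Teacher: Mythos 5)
Your argument follows the paper's route exactly: reduce the claim to $\mathrm{Leop}(K^\cyc,V)$, propagate this to $\mathrm{Leop}(K_\infty,V)$, and close with \cite[Corollary A.8]{LZ14}. The paper simply cites \cite[Lemma 2.4(2)]{LP20} and the discussion before Example A.7 of \cite{LZ14} for the first two steps, whereas you explicitly reprove them (rerunning the Poitou--Tate argument of Lemma \ref{lem1} over $K^\cyc$, and then using Hochschild--Serre along $\Gal(K_\infty/K^\cyc)$ together with Nakayama), and these unpackings are correct and are essentially the content of the citations the paper relies on.
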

\begin{proof}
Since $K$ is totally imaginary, in view of \cite[Corollary A.8]{LZ14}, it is sufficient to show that $\mathrm{Leop}(K_\infty,V)$ is true. By the discussion before Example A.7 of \textit{(loc.cit)}, $\mathrm{Leop}(K^\cyc,V)$ implies $\mathrm{Leop}(K_\infty,V)$. But \cite[Lemma 2.4(2)]{LP20}  implies that $\Sel_{\underline{J}}(A^\vee/K^\cyc)$ being cotorsion as a module over $\Z_p[[\Gal(K^\cyc/K)]]$ is a sufficient condition for $\mathrm{Leop}(K^\cyc,V)$ to hold (see Remark \ref{rem:true?}).
\end{proof}
\begin{remark}\label{rem:true?}
 Note that   \cite[Lemma 2.4(2)]{LP20}  says that it needs to assume that $\Sel_{\underline{J}}(A^\vee/K^\cyc)$ is cotorsion  for all $\underline{J}\in \mathcal{I}$ in order to ensure $\mathrm{Leop}(K^\cyc,V)$ holds. But a careful analysis of their proof reveals that the cotorsionness of $\Sel_{\underline{J}}(A^\vee/K^\mathrm{cyc})$ for only one $\underline{J}$ is sufficient.
\end{remark}
\begin{remark}
\cite[Proposition 3.28]{buyukboduklei0} gives a sufficient condition for $\Sel_{\underline{J}}(A^\vee/K(\mu_{p^\infty}))_+$ to be cotorsion (the plus notation is defined before proposition 3.14 of \cite{buyukboduklei0}). 
\end{remark}
\begin{remark}
   If the Bloch-Kato Selmer group $\Sel_{\mathrm{BK}}(A^\vee/K)$ is finite, it is known that $\Sel_{\underline{J}}(A^\vee/K^\cyc)$ is $\Z_p[[\Gal(K^\cyc/K)]]$-cotorsion (cf. \cite[Corollary 2.10 and Remark 2.2]{ponsinet}). In this case, it is also known that  $\Sel_{\underline{J}}(A^\vee/K_\infty)$ is $\Lambda$-cotorsion (cf. \cite[Corollary 1.1]{RaySprung2023}).
\end{remark}
\subsection{Bound using logarithmic matrices} \label{sub:bound}
Let $F$ be a $p$-adic local field. Consider the Bloch--Kato's dual exponential map $$\mathrm{exp}^*: H^1(F,T) \rightarrow F \otimes \Dcris(T)$$ with kernel $H^1_f(F,T)$. Note that the image of the dual Bloch--Kato exponential map $\mathrm{exp}^*$ lands inside $F \otimes \mathrm{Fil}^0\Dcris(T)$ and $\mathrm{Fil}^0\Dcris(T)$ has $\Z_p$-rank $g=g_+=g_-$ by (\textbf{H.P}). Denote $$\Hf(F,T)=H^1(F,T)/H^1_f(F,T)$$ and $$\Hf(K_{n,p},T)=\prod_{v\mid p}\Hf(K_{n,v},T).$$ {For $n \geq 0$, we define
\begin{align}
\Yprime_n &:=\Coker\Big(\HIw(K_\infty,T)_{\Gamma_n} \rightarrow \prod_{v \mid p}\Hf(K_{n,v},T)\Big).
\end{align}}

For $i \in [1,2g]$, let us suppose that we fix a family of classes $c_1,...,c_{2g} \in \HIw(K_\infty,T)$ such that $\HIw(K_\infty,T)/\langle c_1, \cdots, c_{2g} \rangle$ is $\Lambda$-torsion. Their existence is guaranteed since $\HIw(K_\infty,T)$ is of rank $2g$ (under the hypothesis of either lemma \ref{lem1} or lemma \ref{lem2}).
Let $\loc_{p,n}(c_i)$ be the image of $c_i$ in $H^1_{/f}(K_{n,p},T)$.

We write $W=\mu_{p^\infty} \times \mu_{p^\infty}$ and $\Z_p[w]=\Z_p[w_1,w_2]$ for $(w_1,w_2)\in W.$ If $F \in \Lambda$, we can evaluate $F$ at $w$ in the following way. We write $F$ as a power series in $\gamma_\gp-1$ and $\gamma_{\gp^c}-1$, say $F_0(\gamma_{\gp}-1,\gamma_{\gp^c}-1) $.  We then define $$F(w)=F_0(w_1-1,w_2-1).$$ For a $\Lambda$-module $M$, we write $M_w=M \otimes \Z_p[w]$ for the $\Z_p[w]$-module induced by this evaluation map.

{Let $o(w_1)$ (resp. $o(w_2)$) be the least $r_1$ (resp. $r_2$) such that $w_1^{p^{r_1}}=1$ (resp. $w_2^{p^{r_2}}=1$). Then $\Z_p[w]$ is a free $\Z_p$-module of rank $\varphi(p^r)$ where $r=\max\{o(w_1),o(w_2) \}$. Let $n \geq 1$ be an integer. There are exactly $p^{2n}$ elements $w \in W$ such that $w^{p^n}=1$;  two such elements $w$ are called conjugate if there is an automorphism of the algebraic closure of $\Q_p$ taking one to the other \cite[Section 2, p. 238]{CM}. The number of conjugates of a given $w$ is the rank of $\Z_p[w]$.}

\begin{lemma}\label{primary}
Let $n \geq 1$ be an integer. We have 
$$\rank_{\Z_p}\Yprime_n-\rank_{\Z_p}\Yprime_{n-1} \leq \sum_w\rank_{\Z_p}\Big(\frac{H^1_{/f}(K_{n,p},T)}{\langle\loc_{p,n}(c_1), \cdots , \loc_{p,n}(c_{2g})\rangle}\Big)_w$$
where the sum runs over conjugacy classes of $w \in W$ such that $w^{p^n}=(1,1)$, but $w^{p^{n-1}}\neq (1,1)$. 
\end{lemma}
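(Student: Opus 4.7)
The plan is to decompose the $\Z_p$-ranks of $\Yprime_n$ and $\Yprime_{n-1}$ into character-by-character contributions under the evaluation-at-$w$ functor, and to show that the low-level contributions cancel between the two levels. First I would introduce the auxiliary $\Z_p[\Gamma/\Gamma_n]$-module
\[
\mathcal{Z}_n := \frac{H^1_{/f}(K_{n,p},T)}{\langle\loc_{p,n}(c_1),\ldots,\loc_{p,n}(c_{2g})\rangle}.
\]
Since each $c_i \in \HIw(K_\infty,T)$, the classes $\loc_{p,n}(c_i)$ lie in $\mathcal{A}_n := \Image(\HIw(K_\infty,T)_{\Gamma_n} \to H^1_{/f}(K_{n,p},T))$, so the submodule $\mathcal{B}_n$ they generate is contained in $\mathcal{A}_n$, and there is a canonical surjection $\mathcal{Z}_n \twoheadrightarrow \Yprime_n$ of $\Z_p[\Gamma/\Gamma_n]$-modules. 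Tensoring over $\Lambda$ with $\Z_p[w]$ preserves this surjection, giving $\rank_{\Z_p}(\Yprime_n)_w \leq \rank_{\Z_p}(\mathcal{Z}_n)_w$ for every $w \in W$.

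Next, the decomposition $\Q_p[\Gamma/\Gamma_n] \cong \prod_{[w]: w^{p^n}=1}\Q_p(w)$ (product over $\Q_p$-conjugacy classes) yields, for any finitely generated $\Z_p[\Gamma/\Gamma_n]$-module $M$,
\[
\rank_{\Z_p} M = \sum_{[w]: w^{p^n}=1} \rank_{\Z_p}(M_w).
\]
Applying this to both $\Yprime_n$ and $\Yprime_{n-1}$ and subtracting gives
\[
\rank_{\Z_p}\Yprime_n - \rank_{\Z_p}\Yprime_{n-1} = \sum_{\mathrm{new}\,[w]}\rank_{\Z_p}(\Yprime_n)_w + \sum_{\mathrm{old}\,[w]}\bigl(\rank_{\Z_p}(\Yprime_n)_w - \rank_{\Z_p}(\Yprime_{n-1})_w\bigr),
\]
where \emph{new} means $w^{p^n}=1$ with $w^{p^{n-1}}\neq 1$ and \emph{old} means $w^{p^{n-1}}=1$.

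The heart of the argument is to show that the old-character sum above vanishes, i.e.\ that $\rank_{\Z_p}(\Yprime_n)_w = \rank_{\Z_p}(\Yprime_{n-1})_w$ whenever $w^{p^{n-1}}=1$. I would prove this by realising the finite-level modules $\Yprime_n$ as $\Gamma_n$-coinvariants of a single Iwasawa-theoretic cokernel
\[
\Yprime_\infty := \Coker\Bigl(\HIw(K_\infty,T) \to \prod_{v|p}\HIw(K_{\infty,v},T)/\mathcal{N}_v\Bigr),
\]
where $\mathcal{N}_v$ is the Iwasawa-theoretic replacement of $H^1_f$ supplied by the Loeffler--Zerbes big logarithm together with the Coleman maps $\col^\infty_{T,\gq,i}$ of section~\ref{sec: multi}. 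Granting such a presentation, associativity of tensor product gives $(\Yprime_n)_w = \Yprime_\infty \otimes_\Lambda \Z_p[w]$ whenever $w^{p^n}=1$, so this $\Z_p[w]$-module is independent of the chosen level $n$. In particular $(\Yprime_n)_w = (\Yprime_{n-1})_w$ for every old character, and the old-character sum in the display above vanishes. Combining this cancellation with the surjection $(\mathcal{Z}_n)_w \twoheadrightarrow (\Yprime_n)_w$ from the first paragraph produces exactly the stated inequality.

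The main obstacle is the construction of $\Yprime_\infty$ --- equivalently, verifying that the Bloch--Kato local condition is compatible with the tower in a way that makes old-level $w$-components of $\Yprime_n$ and $\Yprime_{n-1}$ agree. This is where one needs the control theorem for $\HIw(K_{\infty,v},T)$ of \cite[Theorem~A.2]{LZ14} together with the logarithmic-matrix framework of \cite{buyukboduklei0, LP20, LeiSprung2020}; once this compatibility is in place, the character-decomposition bookkeeping above is routine.
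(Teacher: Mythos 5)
Your structural plan --- decompose the rank differences over characters $w$, show the ``old'' contributions cancel, and bound the ``new'' ones by the quotient $\mathcal{Z}_n$ surjecting onto $\Yprime_n$ --- is essentially the one the paper uses. But the step you identify as the heart, namely building a single $\Lambda$-module $\Yprime_\infty$ with $(\Yprime_\infty)_{\Gamma_n}\cong\Yprime_n$ via a local submodule $\mathcal{N}_v$ cut out by the Coleman maps, is both unnecessary and, as sketched, not correct. The kernel of the Coleman maps (equivalently, of the big logarithm) gives rise to the \emph{signed} local conditions, which are precisely the ones that differ from Bloch--Kato's $H^1_f$ at finite level in the supersingular setting; indeed the whole point of the signed theory is that $H^1_f$ does not descend from a single Iwasawa-theoretic submodule of $\HIw(K_{\infty,v},T)$. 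So defining $\mathcal{N}_v$ by Coleman maps would produce a different finite-level cokernel than the $\Yprime_n$ of the lemma, and the deduction $(\Yprime_n)_w = \Yprime_\infty\otimes_\Lambda\Z_p[w]$ would fail.

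The paper bypasses this by not working at infinite level at all for the local module. It uses the dual exponential map to identify $\Hf(K_{n,p},T)\otimes_{\Z_p}\Q_p$ with $\bigoplus_{v\mid p}K_{n,v}^{\oplus g}$; since the fields $K_{n,v}$ form a norm-compatible tower, one gets directly that $\bigl(\Hf(K_{n,p},T)\otimes\Q_p\bigr)_{\Gamma_{n-1}} = \Hf(K_{n-1,p},T)\otimes\Q_p$, and hence $\bigl(\Hf(K_{n,p},T)\otimes\Q_p\bigr)_w = \bigl(\Hf(K_{n-1,p},T)\otimes\Q_p\bigr)_w$ for all old $w$. The analogous compatibility for $\HIw(K_\infty,T)_{\Gamma_n}$ is tautological. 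Feeding both into the commutative diagram of coinvariant decompositions and applying \cite[Lemma~2.7]{CM} gives
\[
\rank_{\Z_p}\Yprime_n - \rank_{\Z_p}\Yprime_{n-1} = \sum_{\text{new }w}\rank_{\Z_p}(\Yprime_n)_w,
\]
which, combined with your (correct) surjection $\mathcal{Z}_n \twoheadrightarrow \Yprime_n$, yields the inequality. If you replace your $\Yprime_\infty$ construction with this direct $\exp^*$-based argument for the old-character cancellation, the proof becomes complete and matches the paper.
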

\begin{proof}
We will essentially follow the line of sketch given in \cite[Corollary 5.7]{LeiSprung2020} with some modifications. 
The Bloch--Kato dual exponential map implies  $$\prod_{v \mid p}\Hf(K_{n,v},T) \otimes_{\Z_p} \Q_p \cong \bigoplus_{v \mid p} K_{n,v}^{\oplus g}.$$ This gives 
$$\Big(\Hf(K_{n,p},T)\otimes_{\Z_p}\Q_p\Big)_{\Gamma_{n-1}} = \Hf(K_{n-1,p},T)\otimes_{\Z_p}\Q_p$$ which implies $$\Big(\Hf(K_{n,p},T)\otimes_{\Z_p}\Q_p\Big)_{w} = \Big(\Hf(K_{n-1,p},T)\otimes_{\Z_p}\Q_p\Big)_w $$ for $w^{p^{n-1}}=(1,1)$. Now suppose $M$ is a $\Lambda$-module. Then \cite[Lemma 2.7]{CM} implies that $$\rank_{\Z_p}M_{\Gamma_n} =\sum_w\rank_{\Z_p}M_w$$ where the direct sum runs over the conjugacy classes of $w \in W$ such that $w^{p^n}=(1,1)$. Applying this for $M=\Hf(K_{n,p},T)$, we deduce 
$$\rank_{\Z_p}\Hf(K_{n,p},T)=\rank_{\Z_p}\Hf(K_{n-1,p},T) + \sum_w\rank_{\Z_p}\Big(\Hf(K_{n,p},T)\Big)_w,$$
where the sum runs over conjugacy classes of $w \in W$ such that $w^{p^n}=(1,1)$, but $w^{p^{n-1}}\neq (1,1)$. We obtain a similar decomposition for $\HIw(K_\infty,T)_{\Gamma_n}$. Let us abbreviate $\Hf(K_{n,p},T)$ by $H^1_n $ and $\HIw(K_\infty,T)_{\Gamma_n}$ by $H^1_{\mathrm{Iw},n}$.
Functoriality of coinvariance and tensor products gives the following commutative diagram 
\begin{equation*}
	\begin{tikzcd}
	H^1_n\otimes \Q_p   & \cong & H^1_{n-1}\otimes \Q_p   &\oplus & \Big(\oplus_{w \in W} \big(H^1_{n}\big)_w \otimes \Q_p\Big)   \\
	 H^1_{\mathrm{Iw},n} \otimes \Q_p \arrow[u]  &\cong &  H^1_{\mathrm{Iw},n-1} \otimes \Q_p \arrow[u]  &\oplus & \Big(\oplus_{w \in W} \big(H^1_{\mathrm{Iw},n}\big)_w \otimes \Q_p\Big) \arrow[u] 
	\end{tikzcd}
	\end{equation*}
	where the sum on the right runs over conjugacy classes of $w \in W$ such that $w^{p^n}=(1,1)$, but $w^{p^{n-1}}\neq (1,1)$. This implies
	$$\rank_{\Z_p}\Yprime_n -\rank_{\Z_p}\Yprime_{n-1} =\sum_w \rank_{\Z_p}(\Yprime_n)_w.$$ The lemma follows from noting that $\Yprime_n$ is a quotient of $\frac{H^1_{/f}(K_{n,p},T)}{\langle\loc_{p,n}(c_1), \cdots , \loc_{p,n}(c_{2g})\rangle}.$
\end{proof}
Suppose $\theta$ is a character of $\Gamma$ sending $(\gamma_\gp, \gamma_{\gp^c})$ to $(w_1,w_2) \in W.$ Then evaluating an element of $\Z_p[[X,Y]]$ at $X=w_1-1$ and $Y=w_2-1$ is equivalent to evaluating an element of $\Lambda$ at $\theta$. Let $w=(w_1,w_2) \in W$ with  $w_1$ a primitive $p^r$-th root of unity and $w_2$ a primitive $p^s$-th root of unity, then the corresponding character of $\Gamma$ has conductor $\gp^{r+1}(\gp^c)^{s+1}.$
Suppose $\theta$ be a character of $\Gamma$ of conductor $\mathfrak{p}^{r+1}(\mathfrak{p}^c)^{s+1}$.  When restricted to $\Gamma_{\mathfrak{p}}$ (respectively $\Gamma_{\mathfrak{p}^c}$), the character $\theta$ gives a character of $\Gamma_p$ whose cyclotomic part is of conductor $p^{r+1}$ (respectively $p^{s+1}$)).

{ Recall that $\underline{J}=(J_\gp,J_{\gp^c})$ where $|J_\gp|+|J_{\gp^c}|=2g$. \newline Let $\mathbf{z}=z_1 \wedge z_2 \wedge \cdots \wedge z_{2g} \in \bigwedge^{2g}\Big(\prod_{v \mid p}H^1(K_v, T \otimes \Z_p[[\Gamma_p]]^\iota)\Big).$ Then we define 
$$\col_{T, \underline{J}}^{k_\infty}(\mathbf{z})=\det\Big(\col_{T,v,i}^{k_\infty}(z_j)\Big)_{v \in \{\gp,\gp^c\},1\leq j \leq 2g, i \in J_v.}$$
}

\begin{lemma}\label{wedgelemma}
Let $\mathbf{z}=z_1 \wedge z_2 \wedge \cdots \wedge z_{2g} \in \bigwedge^{2g}\Big(\prod_{v \mid p}H^1(K_v, T \otimes \Z_p[[\Gamma_p]]^\iota)\Big)$ and let $\theta$ be a character of $\Gamma$ of conductor $\mathfrak{p}^{r+1}(\mathfrak{p}^c)^{s+1}$. Put $n \colonequals \max\{r,s\}$. Write $e_\theta$ for the idempotent corresponding to $\theta$. Then $e_\theta\cdot \mathbf{z}$ lies in $\bigwedge^{2g}\Big(\prod_{v \mid p}H^1_f(K_{n,v}, T)\Big)$  if and only if $$\sum_{\underline{J} \in \mathcal{I}}H_{\underline{I}_0,\underline{J},r,s}\col_{T,\underline{J}}^{k_\infty}(\mathbf{z})(\theta)=0.$$
\end{lemma}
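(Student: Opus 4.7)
The plan is: (i) interpret the condition $e_\theta\cdot\mathbf{z}\in\bigwedge^{2g}\bigl(\prod_{v\mid p}H^1_f(K_{n,v},T)\bigr)$ via the Bloch--Kato dual exponential map, (ii) use the interpolation property of Loeffler--Zerbes' big logarithm together with its factorization through the Coleman maps to convert this into the vanishing of a $2g\times 2g$ determinant expressed via the matrix $H_{r,s}(\theta)$ and the Coleman values, and (iii) apply Cauchy--Binet.

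Since $H^1_f(K_{n,v},T)$ is the kernel of $\exp_T^*$, the membership condition is equivalent to $\exp_T^*(e_\theta z_1)\wedge\cdots\wedge\exp_T^*(e_\theta z_{2g})=0$ inside $\bigwedge^{2g}\bigl(\prod_{v\mid p}K_{n,v}\otimes\Fil^0\mathbb{D}_{\cris,v}(T)\bigr)$. After extending scalars to $\Z_p[w]$ and projecting onto the $\theta$-isotypic component, the $\theta$-part of $K_{n,v}$ is $\Z_p[w]$-free of rank one, while $\Fil^0\mathbb{D}_{\cris,v}(T)$ has $\Z_p$-rank $g$ by \textbf{(H.P)}, so the target of the wedge is itself free of rank one. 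The condition thus reduces to the vanishing of the $2g\times 2g$ determinant whose $i$-th column lists the coordinates of $\exp_T^*(e_\theta z_i)$ in the Hodge-compatible basis of $\Fil^0\mathbb{D}_{\cris,\gp}(T)\oplus\Fil^0\mathbb{D}_{\cris,\gp^c}(T)$.

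The interpolation formula recalled in Section~3.2, combined with $\mathcal{L}_{T,v}^{\infty}=(v_1,\ldots,v_{2g})\cdot M_{T,v}\cdot(\col^{\infty}_{T,v,k})_k$, presents $\exp_T^*(e_\theta z_i^v)$ as a nonzero scalar (the $\varepsilon$-factor $\varepsilon(\theta|_{\Gamma_v}^{-1})$ times the Frobenius twist $\Phi^{-m_v-1}$, with $m_\gp=r$ and $m_{\gp^c}=s$) multiplied by $(v_1,\ldots,v_{2g})\cdot M_{T,v}(\theta)\cdot\vec{c}_i^{\,v}$, where $\vec{c}_i^{\,v}=(\col^{k_\infty}_{T,v,k}(z_i)(\theta))_{k=1}^{2g}$ is the column of Coleman values. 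Since the image lies in $\Fil^0$, only the $\underline{I}_0$-rows contribute. Exploiting the vanishing $\Phi_{p^r}(\theta(\gamma_v))=0$, which forces the lower $g$ rows of $H_{v,r}(\theta)$ to vanish, and the stabilization of the sequence $C_{\frob,v}^{r'+1}H_{v,r'}$ defining $M_{T,v}$, a block-matrix computation identifies $(M_{T,v}(\theta))_{\underline{I}_0,\bullet}$ with $A_v\cdot(H_{v,r}(\theta))_{\underline{I}_0,\bullet}$, where $A_v$ is an invertible $g\times g$ matrix arising from the upper block of $C_{\frob,v}^{r+1}$ (resp.\ $C_{\frob,v}^{s+1}$ for $v=\gp^c$); invertibility follows from \textbf{(H.Frob)} and the shape $C_{\frob,v}=C_v\cdot\mathrm{diag}(I_g,p^{-1}I_g)$ with $C_v\in\mathrm{GL}_{2g}(\mathcal{O}_{K_v})$.

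Assembling both primes, the $2g\times 2g$ determinant of interest equals a nonzero scalar (the two $\varepsilon$-factors, the Frobenius twist contributions, and $\det A_\gp\cdot\det A_{\gp^c}$) times $\det\bigl[(H_{r,s}(\theta))_{\underline{I}_0,\bullet}\cdot C^{\mathrm{all}}\bigr]$, where $C^{\mathrm{all}}$ is the $4g\times 2g$ matrix of Coleman values of $\mathbf{z}$. A Cauchy--Binet expansion of the latter yields $\sum_{\underline{J}\in\mathcal{I}}H_{\underline{I}_0,\underline{J},r,s}(\theta)\cdot\det(C^{\mathrm{all}}_{\underline{J},\bullet}) = \sum_{\underline{J}\in\mathcal{I}}H_{\underline{I}_0,\underline{J},r,s}\,\col^{k_\infty}_{T,\underline{J}}(\mathbf{z})(\theta)$, matching the displayed sum in the statement (only the summands with $|J_\gp|=|J_{\gp^c}|=g$ actually contribute, the other minors of $H_{r,s}$ being zero by its block-diagonal structure). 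The main technical obstacle is the block computation in the previous paragraph: one must control the limit defining $M_{T,v}(\theta)$ carefully and verify that the resulting factor $A_v$, together with the $\varepsilon$-factors and Frobenius twists, are all genuinely invertible nonzero scalars for every relevant $\theta$, so that the equivalence between the determinant vanishing and the stated Cauchy--Binet sum holds cleanly.
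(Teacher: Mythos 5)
Your overall plan (pass to the Bloch--Kato dual exponential, invoke the Loeffler--Zerbes interpolation and its factorisation through the Coleman maps, then Cauchy--Binet) is the same as the paper's. The problem is in how you handle the Frobenius twist $\Phi^{-r-1}$. The interpolation formula gives $\Phi^{-r-1}\mathcal{L}^{\infty}_{T,v}(z)(\theta)=\varepsilon(\theta^{-1})\exp^{\ast}_T(e_\theta z)$, and applying $\Phi^{-r-1}$ to the factorisation $\mathcal{L}^{\infty}_{T,v}=(v_1,\ldots,v_{2g})M_{T,v}\cdot\col$ replaces the basis $(v_1,\ldots,v_{2g})$ by $(\varphi^{-r-1}v_1,\ldots,\varphi^{-r-1}v_{2g})=(v_1,\ldots,v_{2g})C_{\varphi,v}^{-r-1}$ (the entries of $M_{T,v}$ only involve $\Gamma_{\cyc}$, so $\sigma$ acts trivially on them). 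Since $M_{T,v}(\theta)=C_{\varphi,v}^{r+1}H_{v,r}(\theta)$, the $C_{\varphi,v}^{r+1}$ cancels and you get the \emph{finite} matrix $H_{v,r}(\theta)$ on the nose; it is the lower $g$ rows of $H_{v,r}(\theta)$ that vanish and its $\underline{I}_0$ rows that give the $\Fil^0$ coordinates of $\exp^{\ast}$. You instead say ``since the image lies in $\Fil^0$, only the $\underline{I}_0$ rows of $M_{T,v}(\theta)$ contribute,'' which is false: $(v_1,\ldots,v_{2g})M_{T,v}(\theta)\vec c$ is $\Phi^{r+1}$ applied to an element of $\Fil^0$, and $\varphi^{r+1}$ does not preserve $\Fil^0$, so the $\underline{I}_1$ rows of $M_{T,v}(\theta)$ are in general nonzero and do contribute. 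Picking the $\underline{I}_0$ rows of $M_{T,v}(\theta)$ is picking the wrong projection.

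This forces you to introduce the correction $A_v=(C_{\varphi,v}^{r+1})_{\underline{I}_0,I_0}$, and the argument then hinges on $A_v$ being invertible, which you attribute to \textbf{(H.Frob)}. That claim fails: \textbf{(H.Frob)} constrains the slopes and eigenvalues of $C_{\varphi,v}$ but says nothing about invertibility of the top-left $g\times g$ block of its $(r+1)$-st power. In fact, when $C_v$ is block anti-diagonal --- precisely the case of greatest interest in this paper (Section~\ref{sec:blockanti}) --- $C_{\varphi,v}$ is block anti-diagonal, hence $C_{\varphi,v}^{r+1}$ is block anti-diagonal for $r+1$ odd and $A_v=0$. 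Then $\det A_{\gp}\det A_{\gp^c}=0$ and your claimed equality, $\det[\text{coords of }\exp^{\ast}]=(\text{nonzero scalar})\cdot\det A_{\gp}\det A_{\gp^c}\cdot\det[(H_{r,s}(\theta))_{\underline{I}_0,\bullet}C^{\mathrm{all}}]$, would force the left side to vanish identically, destroying the ``if and only if.'' The fix is the one the paper carries out: do not treat $\Phi^{-r-1}$ as a scalar; apply it to the factorisation and work directly with $H_{v,r}(\theta)$ (equivalently $C_{\gp,r}\cdots C_{\gp,1}(\theta)$), after which there is no $A_v$ factor at all and the iff follows from Cauchy--Binet applied to $(H_{r,s}(\theta))_{\underline{I}_0,\bullet}C^{\mathrm{all}}$.
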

\begin{proof}
If $v$ is a place above $\mathfrak{p}$ or $\mathfrak{p}^c$ in $K_{n}$, we have the dual exponential maps $$\mathrm{exp}_v^*: H^1(K_{n,v},T) \rightarrow K_{n,v} \otimes \mathds{D}_{\cris,v}(T).$$ Define $\Exp_{\gp}^*=\prod_{v \mid \gp}\Exp_v^*$ and $\Exp_{\gp^c}^*=\prod_{v \mid \gp^c}\Exp_v^*$ where the products run over the places dividing $\gp$ and $\gp^c$ in $K_n$. Then $e_\theta\cdot \mathbf{z} $ lies in $\bigwedge^{2g}\Big(\prod_{v \mid p}H^1_f(K_{n,v}, T)\Big)$ if and only if $$e_\theta \cdot \bigwedge_{1 \leq j \leq 2g}\Big(\Exp^*_\gp(z_j) \times \Exp^*_{\gp^c}(z_j)\Big) =0.$$
(Here  $\Exp^*_\gp(z_j)$ is  the dual Bloch--Kato exponential map applied to  the appropriate \textit{projection of} $z_j$). Via the interpolation property of the Loeffler--Zerbes' big logarithm map ,  this is equivalent to 
\begin{equation}\label{PRL}
\bigwedge_{1 \leq j \leq 2g}\Big(\Phi^{-r-1}\mathcal{L}_{T,\gp}^{k_\infty}(z_j) \times \Phi^{-s-1}\mathcal{L}_{T,\gp^c}^{k_\infty}(z_j)\Big)(\theta)=0.
\end{equation}
Applying $\Phi^{-r-1}$ to $\mathcal{L}_{T,\gp}^{k_\infty}(z_j)$ we obtain
$$\Phi^{-r-1}\mathcal{L}_{T,\gp}^{k_\infty}(z_j)(\theta)=(v_1,\cdots ,v_{2g})C_{\gp,r} \cdots C_{\gp,1}(\theta)\begin{bmatrix}
           \col_{T,\gp,1}^{k_\infty}(z_j)^{\sigma^{-r-1}}(\theta) \\
           \vdots \\
           \col_{T,\gp,2g}^{k_\infty}(z_j)^{\sigma^{-r-1}}(\theta)
         \end{bmatrix}.$$
Similarly, applying  $\Phi^{-s-1}$ to $\mathcal{L}_{T,\gp^c}^{k_\infty}(z_j)$ we obtain
$$\Phi^{-s-1}\mathcal{L}_{T,\gp^c}^{k_\infty}(z_j)(\theta)=(v_1,\cdots ,v_{2g})C_{\gp^c,s} \cdots C_{\gp^c,1}(\theta)\begin{bmatrix}
           \col_{T,\gp^c,1}^{k_\infty}(z_j)^{\sigma^{-s-1}}(\theta) \\
           \vdots \\
           \col_{T,\gp^c,2g}^{k_\infty}(z_j)^{\sigma^{-s-1}}(\theta)
         \end{bmatrix}.$$
         

Note  that $H_{\gp,r}(\theta)=C_{\gp,r} \cdots C_{\gp,1}(\theta)$ and $H_{\gp^c,s}(\theta)=C_{\gp^c,s} \cdots C_{\gp^c,1}(\theta)$. Therefore by taking wedge product we obtain that \eqref{PRL} is equivalent to the vanishing of 
$$\sum_{\underline{I},\underline{J} \in \mathcal{I}}H_{\underline{I},\underline{J},r,s}\col_{T,\underline{J}}^{k_\infty}(\mathbf{z})(\theta).$$
Now we claim that $H_{\underline{I},\underline{J},r,s}(\theta)$ is zero unless $I=\underline{I}_0$ and $\underline{J} \in \mathcal{I}$. 

If $i \in \{g+1,...,2g\}$, then the entire $i$-th row of $C_{\gp,r}$ (respectively $C_{\gp^c,s}$) is divisible by $\Phi_{p^r}(\gamma_\gp)$ (respectively $\Phi_{p^s}(\gamma_{\gp^c})$). That is, the lower half of $H_{\gp,r}$ and $H_{\gp^c,s}$ are always zero. Hence, in order for a $2g \times 2g$ minor to be nonzero we should take the upper halves of  $H_{\gp,r}$ and $H_{\gp^c,s}$. Hence the claim follows.
\end{proof}
\begin{lemma}\label{wedge2lemma}
Let $\theta$ be a character of $\Gamma$ of conductor $\gp^{r+1}(\gp^c)^{s+1}$ and let $w=(\theta(\gamma_p), \theta(\gamma_{\gp^c}))\in W$. Let $n \colonequals \max \{r,s\}$. If $$\sum_{\underline{J} \in \mathcal{I}}H_{\underline{I}_0,\underline{J},r,s}\col_{T,\underline{J}}^{k_\infty}(\mathbf{c})(\theta)\neq0 $$ then $$\rank_{\Z_p}\Big(\frac{H^1_{/f}(K_{n,p},T)}{\langle\loc_{p,n}(c_1), \cdots , \loc_{p,n}(c_{2g})\rangle}\Big)_w=0.$$ Let $\Q_p(\theta)$ be the field obtained by adjoining the image of $\theta$. When  $\rank_{\Z_p}\Big(\frac{H^1_{/f}(K_{n,p},T)}{\langle\loc_{p,n}(c_1), \cdots , \loc_{p,n}(c_{2g})\rangle}\Big)_w$ is nonzero, it is always bounded by $2g\dim_{\Q_p}\Q_p(\theta).$
\end{lemma}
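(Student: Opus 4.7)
The plan is to combine Lemma \ref{wedgelemma} with a representation-theoretic computation of the $w$-component of $H^1_{/f}(K_{n,p},T)$.

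\emph{Step 1.} First I would compute the $\Z_p$-rank of $(H^1_{/f}(K_{n,p},T))_w$. The Bloch--Kato dual exponential is injective on $H^1_{/f}(K_{n,v},V)$, and by (H.HT) and (H.P) its image is $K_{n,v}\otimes_{\Q_p}\Fil^0\mathbb{D}_{\cris,v}(V)$. Under the standing hypothesis $K(1)\cap K_\infty=K$, there is a unique prime above each of $\gp$ and $\gp^c$ in $K_n$, so each $K_{n,v}$ is a Galois extension of $\Q_p$ with group $\Gal(K_n/K)$. The normal basis theorem identifies $K_{n,v}$ with the regular $\Q_p$-representation of $\Gal(K_n/K)$, and since each $\Fil^0$-piece has $\Q_p$-dimension $g$, I obtain an isomorphism of $\Gal(K_n/K)$-representations
\[
H^1_{/f}(K_{n,p},T)\otimes_{\Z_p}\overline{\Q_p}\;\cong\;\overline{\Q_p}[\Gal(K_n/K)]^{\oplus 2g}.
\]
The $\theta$-isotypic component of the right-hand side has $\overline{\Q_p}$-dimension exactly $2g$. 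Matching the construction $M_w=M\otimes_\Lambda\Z_p[w]$ with this isotypic component (now with coefficients restricted from $\overline{\Q_p}$ to $\Z_p[w]=\mathcal{O}_{\Q_p(\theta)}$) yields $\rank_{\Z_p}(H^1_{/f}(K_{n,p},T))_w=2g\cdot\dim_{\Q_p}\Q_p(\theta)$. The bound in the second assertion is then immediate, since the quotient in question is a quotient of $H^1_{/f}(K_{n,p},T)$ and its $w$-component has $\Z_p$-rank at most that of the numerator's $w$-component.

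\emph{Step 2.} For the first assertion, I would assume the sum in the hypothesis is nonzero. By Lemma \ref{wedgelemma} applied to $\mathbf{z}=\mathbf{c}$, this is equivalent to the image of $e_\theta\cdot\mathbf{c}$ in the $\theta$-isotypic piece of $\bigwedge^{2g}\bigl(H^1_{/f}(K_{n,p},T)\bigr)\otimes\overline{\Q_p}$ being nonzero. But by Step 1 the $\theta$-isotypic piece of $H^1_{/f}(K_{n,p},T)\otimes\overline{\Q_p}$ is $2g$-dimensional, so its top exterior power is one-dimensional; a nonzero element there is a volume form, forcing $e_\theta\loc_{p,n}(c_1),\ldots,e_\theta\loc_{p,n}(c_{2g})$ to be linearly independent, and hence to form a basis of the $\theta$-isotypic piece. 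Consequently the $\theta$-isotypic piece of the quotient vanishes, which translates into $\rank_{\Z_p}(\text{quotient})_w=0$.

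I expect the delicate step to be the interface between two languages: the $\Lambda$-module formalism in which Lemma \ref{wedgelemma} lives (evaluating a wedge of Iwasawa-cohomology classes against a character $\theta$ via the interpolation formula for the Loeffler--Zerbes big logarithm map), and the representation-theoretic language of $\theta$-isotypic components in cohomology of the finite layer $K_n$. Once one unpacks Lemma \ref{wedgelemma} to recognize $e_\theta\cdot\mathbf{c}$ literally as a class in the $\theta$-component of $\bigwedge^{2g}H^1_{/f}(K_{n,p},T)\otimes\overline{\Q_p}$, everything that remains is linear algebra in a $2g$-dimensional vector space, together with the immediate observation that quotient maps cannot increase $\Z_p$-rank.
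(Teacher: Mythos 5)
Your proposal is correct and follows essentially the same route as the paper: compute (via the dual exponential and normal basis theorem, which the paper states more tersely) that $(\Hf(K_{n,p},T))_w\otimes\Q_p$ is a $2g$-dimensional $\Q_p(\theta)$-vector space, invoke Lemma \ref{wedgelemma} to conclude that the image of $\loc_{p,n}(c_1)\wedge\cdots\wedge\loc_{p,n}(c_{2g})$ in the one-dimensional top exterior power is nonzero, and then read off surjectivity of the $c_i$'s by the volume-form argument. The only cosmetic difference is that the paper phrases the middle step as the vanishing of the $w$-component of the quotient of $\bigwedge^{2g}\Hf$ before deducing nonvanishing of the wedge image, whereas you go to the nonvanishing directly.
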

\begin{proof}
For $\gq \in \{\gp,\gp^c\}$, $$\prod_{v \mid \gq}\Hf(K_{n,v},T)_w \otimes \Q_p \cong \Q_p(\theta)^{\oplus g} .$$ So $$\bigwedge^{2g}\Big(
\Hf(K_{n,p},T)\Big)_w\otimes \Q_p=
\bigwedge^{2g}\Big(\prod_{v \mid \gp}\Hf(K_{n,v},T)_w \times \prod_{v \mid \gp^c}\Hf(K_{n,v},T)_w\Big) \otimes \Q_p$$ is a one dimensional $\Q_p(\theta)$-vector space. By lemma \ref{wedgelemma}, 
$$\rank_{\Z_p}\Big(\frac{\bigwedge^{2g}H^1_{/f}(K_{n,p},T)}{\loc_{p,n}(c_1)\wedge \cdots \wedge \loc_{p,n}(c_{2g})}\Big)_w>0$$  if and only if $$\sum_{\underline{J} \in \mathcal{I}}H_{\underline{I}_0,\underline{J},r,s}\col_{T,\underline{J}}^{k_\infty}(\mathbf{c})(\theta)=0. $$ Therefore under the hypothesis of lemma \ref{wedge2lemma}, 
$$\rank_{\Z_p}\Big(\frac{\bigwedge^{2g}H^1_{/f}(K_{n,p},T)}{\loc_{p,n}(c_1)\wedge \cdots \wedge \loc_{p,n}(c_{2g})}\Big)_w=0.$$ Note that $\bigwedge^{2g}\Hf(K_{n,p},T)_w \otimes \Q_p$ is a one dimensional $\Q_p(\theta)$-vector space and so the image of $\loc_{p,n}(c_1)\wedge \cdots \wedge \loc_{p,n}(c_{2g})$ in it is nonzero, since the rank of the quotient is zero. This implies that 
$\rank_{\Z_p}\Big(\frac{H^1_{/f}(K_{n,p},T)}{\langle\loc_{p,n}(c_1), \cdots , \loc_{p,n}(c_{2g})\rangle}\Big)_w=0.$
\end{proof}

We impose the following hypothesis whenever needed.

\textbf{(H-large)}: For $r,s$ such that $|r-s| \gg 0$, $\sum_{\underline{J} \in \mathcal{I}}H_{\underline{I}_0,\underline{J},r,s}\col_{T,\underline{J}}^{k_\infty}(\mathbf{c})(\theta)\neq0. $


\begin{lemma}
Suppose the hypothesis \textbf{(H-large)} holds. Let $n \geq 1$ and $\Upxi_n$ be the set of characters $\theta$ of $\Gamma$ which factor through $\Gamma_n$ but not $\Gamma_{n-1}$ such that $$\sum_{\underline{J} \in \mathcal{I}}H_{\underline{I}_0,\underline{J},r,s}\col_{T,\underline{J}}^{k_\infty}(\mathbf{c})(\theta)=0 $$ (with the conductor of $\theta$ being $\gp^{r+1}(\gp^c)^{s+1}$). Then the cardinality of $\Upxi_n$ is bounded independent of $n$.
\end{lemma}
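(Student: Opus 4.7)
The plan is to first use \textbf{(H-large)} to restrict the possible conductors of characters in $\Upxi_n$, and then to bound the number of vanishing characters for each allowed conductor type independently of $n$.

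\emph{Step 1: Restriction via \textbf{(H-large)}.} By \textbf{(H-large)}, there exists a constant $M\geq 0$ such that for every character $\theta$ whose conductor $\gp^{r+1}(\gp^c)^{s+1}$ satisfies $|r-s|>M$, the sum $F_{r,s}(\theta):=\sum_{\underline{J}\in\mathcal{I}}H_{\underline{I}_0,\underline{J},r,s}\col_{T,\underline{J}}^{k_\infty}(\mathbf{c})(\theta)$ is nonzero. Hence every $\theta\in\Upxi_n$ must have conductor exponents satisfying $|r-s|\le M$.

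\emph{Step 2: Conductor constraint from order considerations.} Since $\theta\in\Upxi_n$ factors through $\Gamma/\Gamma_n$ but not through $\Gamma/\Gamma_{n-1}$, its order is exactly $p^n$. Using that the order of $\theta$ equals $p^{\max(r,s)}$ (where $\theta(\gamma_\gp)$ and $\theta(\gamma_{\gp^c})$ have orders $p^r$ and $p^s$ respectively), this forces $\max(r,s)=n$. Combining with Step 1, the admissible pairs $(r,s)$ for $\theta\in\Upxi_n$ number at most $2M+1$, namely $(n,s)$ with $s\in\{n-M,\ldots,n\}$ together with $(r,n)$ for $r\in\{n-M,\ldots,n-1\}$.

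\emph{Step 3: Uniform count of zero-characters per admissible $(r,s)$.} For each such pair, $F_{r,s}\in\Lambda$ is a linear combination of the fixed Coleman-map values $\col_{T,\underline{J}}^{k_\infty}(\mathbf{c})$ with coefficients $H_{\underline{I}_0,\underline{J},r,s}$ that are polynomial expressions in $\gamma_\gp-1$ and $\gamma_{\gp^c}-1$. When evaluated at a character $\theta$ of conductor $\gp^{r+1}(\gp^c)^{s+1}$, the identities $\Phi_{p^r}(\theta(\gamma_\gp))=0$ and $\Phi_{p^s}(\theta(\gamma_{\gp^c}))=0$, together with the rank degeneracy of $H_{\gp,r}(\theta)$ and $H_{\gp^c,s}(\theta)$ noted in the proof of Lemma~\ref{wedgelemma}, reduce $F_{r,s}(\theta)$ to a comparatively simple algebraic expression in $\theta(\gamma_\gp)$ and $\theta(\gamma_{\gp^c})$. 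An application of Weierstrass preparation in $\Lambda\cong\Z_p\llbracket X,Y\rrbracket$ to $F_{r,s}$ then bounds the number of zero-characters of conductor $\gp^{r+1}(\gp^c)^{s+1}$ by a constant depending only on the fixed $\Lambda$-elements $\col_{T,\underline{J}}^{k_\infty}(\mathbf{c})$, and not on $(r,s)$.

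The main obstacle is Step 3: while the cyclotomic analogue treated in \cite{LP20} follows from one-variable Weierstrass preparation, the present $\Z_p^2$-setting requires a genuinely two-variable argument uniform in $(r,s)$. One must leverage the product structure $H_{\gp,r}=C_{\gp,r}\cdots C_{\gp,1}$ and $H_{\gp^c,s}=C_{\gp^c,s}\cdots C_{\gp^c,1}$, together with the fact that the $\col_{T,\underline{J}}^{k_\infty}(\mathbf{c})$ are fixed, to obtain a uniform zero-count. Combining Steps 1--3 yields $|\Upxi_n|\leq (2M+1)\cdot C$ for some constant $C$ independent of $n$, proving the lemma.
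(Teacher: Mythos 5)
Your Steps 1 and 2 reproduce exactly the argument the paper gives: \textbf{(H-large)} forces $|r-s|\leq n_0$ for any $\theta\in\Upxi_n$, and the fact that $\theta$ factors through $\Gamma_n$ but not $\Gamma_{n-1}$ forces $\max(r,s)=n$, so only $O(n_0)$ pairs $(r,s)$ can arise. The paper's own proof \emph{stops there}: it declares that once the number of admissible pairs $(r,s)$ is bounded ``the result'' follows. Your proposal does not stop; you explicitly flag that a bounded number of conductor pairs does not by itself bound the number of characters (or Galois-conjugacy classes of characters) of those conductors for which $F_{r,s}(\theta):=\sum_{\underline{J}\in\mathcal{I}}H_{\underline{I}_0,\underline{J},r,s}\col_{T,\underline{J}}^{k_\infty}(\mathbf{c})(\theta)$ vanishes, and you propose a Weierstrass-preparation (Step 3) to obtain a zero count uniform in $(r,s)$. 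You then acknowledge, candidly, that you have not actually carried out Step 3 in the two-variable setting.

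So, relative to the paper, you have added a step the paper does not write down and you have not completed that step. Your instinct is legitimate: for an admissible pair $(r,s)$ with $\max(r,s)=n$, the number of conjugacy classes of characters of conductor $\gp^{r+1}(\gp^c)^{s+1}$ is $\varphi(p^{\min(r,s)})$, which grows with $n$; bounding the $(r,s)$'s alone does not a priori bound $|\Upxi_n|$, and a valuation estimate in the spirit of \cite[Corollary 5.10]{LeiSprung2020} (used later in the proof of Proposition \ref{block}) or a uniform two-variable zero-count would be needed to finish. But whatever one decides about the paper's own terseness, your write-up as it stands is not a complete proof either, since Step 3 is left as a sketch with the difficulty unresolved; at most it is a more honest account of where the argument requires work than the paper's own one-line justification.
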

\begin{proof}
If $\theta$ factors through $\Gamma_n$ but not $\Gamma_{n-1}$, then either $r$ or $s$ equals $n$. By the hypothesis \textbf{(H-large)}, there exists a fixed integer $n_0$ (independent of $n$) such that if $$\sum_{\underline{J} \in \mathcal{I}}H_{\underline{I}_0,\underline{J},r,s}\col_{T,\underline{J}}^{k_\infty}(\mathbf{c})(\theta)=0, $$ then $|r-s|\leq n_0$. If either $r$ or $s$ equals $n$, then the number of such $(r,s)$ is bounded and hence the result.
\end{proof}
\begin{proposition}\label{prop: bound1}
Suppose the hypothesis \textbf{(H-large)} holds. Then $\rank_{\Z_p}\Yprime_n = O(p^n)$.
\end{proposition}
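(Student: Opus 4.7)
The strategy is to telescope the ranks, bounding the increment $\rank_{\Z_p}\Yprime_n - \rank_{\Z_p}\Yprime_{n-1}$ at each level by combining Lemma~\ref{primary}, Lemma~\ref{wedge2lemma}, and the cardinality bound on $\Upxi_n$ afforded by \textbf{(H-large)}. Concretely, I would first invoke Lemma~\ref{primary} to write
\[
\rank_{\Z_p}\Yprime_n - \rank_{\Z_p}\Yprime_{n-1} \leq \sum_{[w]} \rank_{\Z_p}\Bigl(\tfrac{H^1_{/f}(K_{n,p},T)}{\langle \loc_{p,n}(c_1),\ldots,\loc_{p,n}(c_{2g}) \rangle}\Bigr)_w,
\]
with the sum over conjugacy classes of $w \in W$ satisfying $w^{p^n}=(1,1)$ but $w^{p^{n-1}}\neq (1,1)$. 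Each such conjugacy class corresponds to a Galois orbit of characters $\theta$ of $\Gamma$ whose conductor $\gp^{r+1}(\gp^c)^{s+1}$ satisfies $\max(r,s) = n$.

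Next, for each such $\theta$, Lemma~\ref{wedge2lemma} forces the summand at $w = (\theta(\gamma_\gp),\theta(\gamma_{\gp^c}))$ to vanish unless $\theta \in \Upxi_n$, and in the non-vanishing case bounds it by $2g\dim_{\Q_p}\Q_p(\theta)$. Since $\theta$ takes values in $\mu_{p^n}$, one has $\dim_{\Q_p}\Q_p(\theta) \leq \varphi(p^n) \leq p^n$. Under \textbf{(H-large)}, the previous lemma provides a constant $C$, independent of $n$, with $|\Upxi_n| \leq C$. Assembling these bounds gives
\[
\rank_{\Z_p}\Yprime_n - \rank_{\Z_p}\Yprime_{n-1} \leq 2gC p^n.
\]
Telescoping and summing the resulting geometric series produces $\rank_{\Z_p}\Yprime_n \leq \rank_{\Z_p}\Yprime_0 + \sum_{k=1}^n 2gCp^k = O(p^n)$, which is the desired estimate.

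The principal technical point to take care of is the bookkeeping between the conjugacy-class indexing in Lemma~\ref{primary} (classes of $w \in W$) and the character indexing in $\Upxi_n$ (classes of $\theta$), since one needs to ensure each relevant $w$ is accounted for exactly once and that the dimension bound $\dim_{\Q_p}\Q_p(\theta)$ correctly captures the $\Z_p$-rank contribution at $w$. The substantive content, however, is already packaged in Lemma~\ref{wedge2lemma}: the hypothesis \textbf{(H-large)} is precisely what limits the population of ``bad'' characters $\theta$ per level, and hence is the essential mechanism preventing the per-level rank increment from exceeding $O(p^n)$.
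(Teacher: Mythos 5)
Your proposal is correct and follows essentially the same route as the paper: Lemma~\ref{primary} for the telescoping increment, Lemma~\ref{wedge2lemma} to restrict nonzero contributions to characters in $\Upxi_n$ and bound each by $2g\dim_{\Q_p}\Q_p(\theta)$, and the boundedness of $|\Upxi_n|$ under \textbf{(H-large)}. The paper writes the per-level bound as $2gC_n\varphi(p^n)$ where you use the marginally coarser $2gCp^n$, but the summation and the $O(p^n)$ conclusion are identical.
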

\begin{proof}
Let $C_n$ be the cardinality of $\Upxi_n$. By lemmas \ref{wedge2lemma} and \ref{primary}, 
$$\rank_{\Z_p}\Yprime_n -\rank_{\Z_p}\Yprime_{n-1} \leq 2gC_n\varphi(p^n).$$ Under the hypothesis \textbf{(H-large)}, $C_n$ is less than a certain constant independent of $n$. Summing the inequality over $n$, we deduce that $$\rank_{\Z_p}\Yprime_n-\rank_{\Z_p}\Yprime_1 \leq \sum_{i=2}^nC\varphi(p^i)=O(p^n),$$
where $C$ is some constant depending on $g$ and independent of $n$.
\end{proof}

\begin{lemma}
Let $\underline{J} \in \mathcal{I}$ and suppose that $\Sel_{\underline{J}}(A^{\vee}/K_\infty)^{\vee}$ is $\Lambda$-torsion. Then $\col_{T,\underline{J}}^{k_\infty}(\mathbf{c})\neq 0$.
\end{lemma}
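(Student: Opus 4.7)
The plan is to realize $\col_{T,\underline{J}}^{k_\infty}(\mathbf{c})$ as (up to sign) the determinant of the map that sends each $c_j$ into $\Lambda^{2g}$ via localization at $p$ followed by the Coleman maps, and to show via Poitou--Tate and the torsion hypothesis that this determinant cannot vanish.

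First, by Lemma \ref{lem1}, the assumption that $\mathcal{X}_{\underline{J}}(A^\vee/K_\infty)$ is $\Lambda$-torsion guarantees that $\HIw(K_\infty,T)$ has $\Lambda$-rank $2g$ and that the weak Leopoldt conjecture $\mathrm{Leop}(K_\infty,V)$ holds. The Poitou--Tate exact sequence \eqref{Poitou-Tate} then shows that the cokernel of the localization map
$$
\Psi \colon \HIw(K_\infty,T) \longrightarrow \frac{H^1(K_\gp,T\otimes\Lambda^\iota)}{\ker \col_{T,J_\gp}} \oplus \frac{H^1(K_{\gp^c},T\otimes\Lambda^\iota)}{\ker \col_{T,J_{\gp^c}}}
$$
embeds into $\mathcal{X}_{\underline{J}}(A^\vee/K_\infty)$, so in particular it is $\Lambda$-torsion.

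Next, the Coleman maps embed the target of $\Psi$ into $\Lambda^{|J_\gp|}\oplus\Lambda^{|J_{\gp^c}|}=\Lambda^{2g}$, and composing yields a $\Lambda$-linear map
$$
\Phi \colon \HIw(K_\infty,T) \longrightarrow \Lambda^{2g}, \qquad \Phi(z) = \bigl(\col_{T,v,i}^{k_\infty}(z)\bigr)_{v\in\{\gp,\gp^c\},\, i\in J_v}.
$$
The discussion following Lemma \ref{lem1} shows that, under $\mathrm{Leop}(K_\infty,V)$ together with the (standard) nonvanishing of the individual Coleman maps, the target of $\Psi$ has $\Lambda$-rank $2g$; combined with the torsion cokernel from the previous step, this forces the image of $\Phi$ to have $\Lambda$-rank $2g$ as well.

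Finally, by the choice of the family $c_1,\ldots,c_{2g}$, the quotient $\HIw(K_\infty,T)/\langle c_1,\ldots,c_{2g}\rangle$ is $\Lambda$-torsion, so $\Phi(\HIw(K_\infty,T))/\langle \Phi(c_1),\ldots,\Phi(c_{2g})\rangle$ is torsion as well. Hence $\langle \Phi(c_1),\ldots,\Phi(c_{2g})\rangle$ has $\Lambda$-rank $2g$ inside $\Lambda^{2g}$, the vectors $\Phi(c_1),\ldots,\Phi(c_{2g})$ are $\Lambda$-linearly independent, and consequently
$$
\col_{T,\underline{J}}^{k_\infty}(\mathbf{c}) \;=\; \det\bigl[\Phi(c_1)\mid\cdots\mid\Phi(c_{2g})\bigr] \;\neq\; 0.
$$
The one delicate ingredient is the claim that the target of $\Psi$ itself has $\Lambda$-rank exactly $2g$; this is the place where the structure of the Coleman maps genuinely enters, and it ultimately rests on the non-degeneracy of the logarithmic matrices $M_{T,\gp}$ and $M_{T,\gp^c}$ together with the injectivity (up to torsion) of the Loeffler--Zerbes big logarithm.
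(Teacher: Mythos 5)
Your argument is correct and is essentially the paper's own proof expanded into full detail: the paper's proof of this lemma is the one-line remark that the torsion hypothesis, the Poitou--Tate sequence \eqref{Poitou-Tate}, and Lemma \ref{lem1} imply the conclusion, and you have spelled out exactly how those three ingredients combine into a rank count and the resulting nonvanishing of the $2g\times 2g$ determinant defining $\col_{T,\underline{J}}^{k_\infty}(\mathbf{c})$.
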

\begin{proof}
The fact that $\Sel_{\underline{J}}(A^{\vee}/K_\infty)^{\vee}$ is $\Lambda$-torsion combined with \eqref{Poitou-Tate} and lemma \ref{lem1} imply the desired result.
\end{proof}

\subsection{Block anti-diagonal matrices}\label{sec:blockanti}
Suppose we assume that $\Sel_{\underline{J}}(A^{\vee}/K_\infty)^{\vee}$ is $\Lambda$-torsion. Then it will imply that $\col_{T,\underline{J}}^{k_\infty}(\mathbf{c})(\theta)\neq 0$ for $|r-s|\gg 0$. But this is not enough to ensure that 
$\sum_{\underline{J} \in \mathcal{I}}H_{\underline{I}_0,\underline{J},r,s}\col_{T,\underline{J}}^{k_\infty}(\mathbf{c})(\theta)\neq0 $ since $H_{\underline{I}_0,\underline{J},r,s}(\theta)$ could be zero. In general, this is complicated to verify unless we are in some special cases where we can explicitly write down the matrices $C_{\varphi,v}$. This special case that we will be looking at can be thought of as an analogue for the case $a_p=0$ for supersingular elliptic curves as explained below. 

In this section, for each $v$, let us suppose that there exist a basis of $\mathbb{D}_{\cris,v}(T)$ such that the matrix $C_v$ is of the form  $\left[
\begin{array}{c|c}
0 & * \\
\hline
* & 0
\end{array}
\right],
$
where * is a $g \times g$ matrix defined over $\Z_p$. This means $\varphi(v_i)\notin \Fil^0\mathbb{D}_{\cris,v}(T)$ and $\varphi^2(v_i)\in \Fil^0\mathbb{D}_{\cris,v}(T)$  for all $i \in \{1,...,g\}$. Therefore, $C_{v,n}$ is of the form 
$$C_{v,n}= \left[
\begin{array}{c|c}
0 & B_{v,1} \\
\hline
\Phi_{p^n}(1+X)B_{v,2} & 0
\end{array}
\right],
$$
for some invertible $g \times g$ matrices $B_{v,1}$ and $B_{v,2}$ defined over $\Z_p$.  {In the context of elliptic curve with $a_p=0$, we may choose a basis $\{\omega\}$ for $\Fil^0 \mathbb{D}_{\cris,v}(T)$ and extend it to a basis $\{\omega,\varphi(\omega)\}$ of $\mathbb{D}_{\cris,v}(T)$. With this choice, the matrix $C_v$ is equal to the anti-diagonal matrix $\begin{bmatrix} 0 & -1 \\ 1 & 0 \end{bmatrix}$ \cite[Appendix 4]{buyukboduklei0}. In particular, the matrix $C_{v,n}$ will have the same structure as the matrix $C_n$ defined in \cite[p. 192]{LeiSprung2020}. In this case, it can be shown \cite[Proposition 5.12]{LeiSprung2020} that the hypothesis \textbf{(H-large)} is satisfied for such elliptic curves.}

For all $n \geq 1$, we fix a compatible system $\{\zeta_{p^n}: \zeta_{p^{n+1}}^p=\zeta_{p^n}\}$ of primitive $p^n$-th roots of unity and we write $\varepsilon_n=\zeta_{p^n}-1$. 

Recall that $\underline{I}_0=(I_{\gp,0}, I_{{\gp^c},0})$ and $I_{\gq,0} =  \{1,...,g\}$ for $\gq \in \{\gp,\gp^c\}.$ Let $\underline{I}_1$ be the complement of $\underline{I}_0$. That is 
$\underline{I}_1=(I_{\gp,1}, I_{{\gp^c},1})$ and $I_{\gq,1} =  \{g+1,...,2g\}$ for $\gq \in \{\gp,\gp^c\}.$  Let $\underline{I}_{\mathrm{mix}_{0,1}}=(I_{\gp,0},I_{\gp^c,1})$ and $\underline{I}_{\mathrm{mix}_{1,0}}=(I_{\gp,1},I_{\gp^c,0})$.

\begin{proposition}\label{block}
Suppose $C_\gp$ and $C_{\gp^c}$ are block anti-diagonal matrices and the Selmer groups $\Sel_{\underline{I}_0}(A^{{\vee}}/K_\infty)$,  $\Sel_{\underline{I}_1}(A^{{\vee}}/K_\infty)$, $\Sel_{\underline{I}_{\mathrm{mix}_{0,1}}}(A^{{\vee}}/K_\infty)$ and $\Sel_{\underline{I}_{\mathrm{mix}_{1,0}}}(A^{{\vee}}/K_\infty)$ are all $\Lambda$-cotorsion. Then the hypothesis \textbf{(H-large)} holds.
\end{proposition}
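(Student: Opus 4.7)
The plan is to exploit the block anti-diagonal form of $C_\gp$ and $C_{\gp^c}$ to collapse the sum appearing in \textbf{(H-large)} to a single explicit term indexed by one of the four distinguished tuples $\underline{I}_0$, $\underline{I}_1$, $\underline{I}_{\mathrm{mix}_{0,1}}$, $\underline{I}_{\mathrm{mix}_{1,0}}$, and then to deduce its non-vanishing from the four cotorsion hypotheses via the preceding lemma.

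For a character $\theta$ of $\Gamma$ of conductor $\gp^{r+1}(\gp^c)^{s+1}$, write $w_1=\theta(\gamma_\gp)$ and $w_2=\theta(\gamma_{\gp^c})$. Under the block anti-diagonal assumption,
$$
C_{\gp,j}(\theta)=\begin{pmatrix} 0 & B_{\gp,1} \\ \Phi_{p^j}(w_1)B_{\gp,2} & 0\end{pmatrix},
$$
which is genuinely block anti-diagonal with invertible off-diagonal blocks for $j<r$, while $C_{\gp,r}(\theta)$ has vanishing bottom row since $\Phi_{p^r}(w_1)=0$. A short induction shows that $C_{\gp,r-1}(\theta)\cdots C_{\gp,1}(\theta)$ is block diagonal or block anti-diagonal with invertible $g\times g$ blocks according to the parity of $r-1$, so left-multiplying by $C_{\gp,r}(\theta)$ yields
$$
H_{\gp,r}(\theta)=\begin{pmatrix}U_r & 0\\ 0 & 0\end{pmatrix}\ \text{if $r$ is even},\qquad H_{\gp,r}(\theta)=\begin{pmatrix}0 & U_r\\ 0 & 0\end{pmatrix}\ \text{if $r$ is odd},
$$
with $U_r$ an invertible $g\times g$ matrix; the analogous description of $H_{\gp^c,s}(\theta)$ holds with $(r,w_1)$ replaced by $(s,w_2)$.

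Since $H_{r,s}=\mathrm{diag}(H_{\gp,r},H_{\gp^c,s})$, the minor $H_{\underline{I}_0,\underline{J},r,s}(\theta)$ splits as the product of a $g\times|J_\gp|$-submatrix of the top $g$ rows of $H_{\gp,r}(\theta)$ and the corresponding $g\times|J_{\gp^c}|$-submatrix of $H_{\gp^c,s}(\theta)$. The shape of the two blocks forces $|J_\gp|=|J_{\gp^c}|=g$, with $J_\gp$ (resp.\ $J_{\gp^c}$) being precisely the columns supporting $U_r$ (resp.\ $U_s$); equivalently, $\underline{J}$ must equal one of $\underline{I}_0$, $\underline{I}_1$, $\underline{I}_{\mathrm{mix}_{0,1}}$, $\underline{I}_{\mathrm{mix}_{1,0}}$ as determined by the parities of $(r,s)$. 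Writing $\underline{J}_{r,s}$ for this unique tuple, the sum collapses to
$$
\sum_{\underline{J}\in\mathcal{I}}H_{\underline{I}_0,\underline{J},r,s}(\theta)\,\col_{T,\underline{J}}^{k_\infty}(\mathbf{c})(\theta)=H_{\underline{I}_0,\underline{J}_{r,s},r,s}(\theta)\cdot\col_{T,\underline{J}_{r,s}}^{k_\infty}(\mathbf{c})(\theta),
$$
whose first factor is nonzero by the explicit form just obtained.

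By the preceding lemma, the four cotorsion hypotheses ensure that each Coleman image $\col_{T,\underline{J}_{r,s}}^{k_\infty}(\mathbf{c})\in\Lambda$ is nonzero; it remains to show that such a nonzero $F\in\Lambda$ cannot vanish at every $\theta$ with $|r-s|$ arbitrarily large. I would establish this by a two-variable Weierstrass preparation argument adapting \cite[Proposition 5.12]{LeiSprung2020}: writing $F\in\Z_p[[\gamma_\gp-1,\gamma_{\gp^c}-1]]$ in distinguished form and describing the image of its zero locus in the character variety of $\Gamma$, the characters at which $F$ vanishes are confined to a ``thin'' locus where $|r-s|$ is bounded by a constant $n_0(F)$. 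Taking the maximum of these constants over the four relevant Coleman images yields \textbf{(H-large)}. The main obstacle is this final Weierstrass-type analysis: the first three steps amount to explicit matrix computations and a direct appeal to cotorsion, but controlling the $|r-s|$-profile of zeros of a nonzero element of $\Z_p[[\gamma_\gp-1,\gamma_{\gp^c}-1]]$ on the characters of $\Gamma$ requires a careful two-variable generalisation of the one-variable argument of Lei--Sprung.
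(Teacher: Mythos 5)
Your outline tracks the paper's proof closely and correctly identifies the three key steps: (1) use the block anti-diagonal shape of $C_\gp$ and $C_{\gp^c}$ to compute $H_{\gq,k}(\theta)$ and conclude that $H_{\underline{I}_0,\underline{J},r,s}(\theta)$ vanishes unless $\underline{J}$ is the one distinguished tuple among $\underline{I}_0,\underline{I}_1,\underline{I}_{\mathrm{mix}_{0,1}},\underline{I}_{\mathrm{mix}_{1,0}}$ determined by the parities of $r$ and $s$, collapsing the sum to a single term with nonzero matrix coefficient; (2) use the four cotorsion hypotheses via the preceding lemma to conclude $\col_{T,\underline{J}}^{k_\infty}(\mathbf{c})\neq 0$ for each relevant $\underline{J}$; and (3) deduce non-vanishing at characters with $|r-s|$ large. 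For (1) the paper cites \cite[Lemma 3.5]{LP20} rather than re-deriving the block form, but that is the same computation you sketch.

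The genuine gap is in step (3). Your proposed argument hinges on the assertion that a nonzero $F\in\Lambda$ can only vanish at characters $\theta$ with $|r-s|$ bounded by a constant $n_0(F)$. This is false in general: take $F=\Phi_{p^2}(\gamma_\gp)$, which is a nonzero element of $\Lambda$; it vanishes at every $\theta$ whose $\gp$-component has conductor exactly $\gp^3$ (i.e.\ $r=2$), and for such $\theta$ the other exponent $s$ runs over all of $\mathbb{Z}_{\geq 0}$, so $|r-s|$ is unbounded on the zero locus. Thus no general two-variable Weierstrass preparation forces the zero locus into a thin $|r-s|\leq n_0$ band, and this step cannot be carried out in the generality you propose. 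The paper instead cites \cite[Corollary 5.10]{LeiSprung2020}, which supplies an explicit valuation formula
$\val_p\bigl(\col_{T,\underline{J}}^{k_\infty}(\mathbf{c})(\theta)\bigr) = a + b/\varphi(p^r) + c/\varphi(p^s)$
valid for $|r-s|\gg 0$; the non-vanishing then follows since a finite valuation means a nonzero value. To repair your argument you would either need to invoke that result directly (and check that it applies at the level of generality needed for abelian varieties, since it is proved in the elliptic-curve $\sharp/\flat$ setting) or supply an independent derivation of a valuation formula that rules out exactly the degeneracies illustrated by the $\Phi_{p^2}(\gamma_\gp)$ example. A bare ``nonzero element of $\Lambda$'' hypothesis is not enough.
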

\begin{proof}
For any $k \geq 1$, let $\delta_k$ be the constant \begin{equation*}
  \delta_k =
    \begin{cases}
      \frac{\varepsilon_1}{\varepsilon_2}\cdot \frac{\varepsilon_3}{\varepsilon_4} \cdots \frac{\varepsilon_{k-2}}{\varepsilon_{k-1}} & \text{if $k$ is odd}\\
     \frac{\varepsilon_1}{\varepsilon_2}\cdot \frac{\varepsilon_3}{\varepsilon_4} \cdots \frac{\varepsilon_{k-1}}{\varepsilon_{k}} & \text{if $k$ is even.}
    \end{cases}       
\end{equation*}
Let $\theta$ be a character of $\Gamma$ of conductor $\gp^{r+1}(\gp^c)^{s+1}$. By \cite[Lemma 3.5]{LP20}, 
\begin{equation*}
  H_{\gq,k}(\zeta_{p^k}-1) =
    \begin{cases}
      \left[
\begin{array}{c|c}
0 & \delta_k(B_{\gq,1}B_{\gq,2})^{\frac{k-1}{2}}B_{\gq,1} \\
\hline
0 & 0
\end{array}
\right] & \text{if $k$ is odd,}\\
&\\
      \left[
\begin{array}{c|c}
\delta_k(B_{\gq,1}B_{\gq,2})^\frac{k}{2} & 0 \\
\hline
0 & 0
\end{array}
\right] & \text{if $k$ is even,}
    \end{cases}       
\end{equation*}
where $k=r$ if $\gq=\gp$ and $k=s$ if $\gq=\gp^c$.
Hence,
\begin{equation}\label{cases}
H_{\underline{I}_0,\underline{J},r,s}(\theta)=0 \text{ unless }
    \begin{cases}
      \underline{J}=\underline{I}_0 & \text{if $r$ is even and $s$ is even,}\\
      \underline{J}=\underline{I}_1 & \text{if $r$ is odd and $s$ is odd,}\\
      \underline{J}=(J_\gp,J_{\gp^c}), J_\gp=I_{\gp,1}, J_{\gp^c}=I_{\gp^c,0} & \text{if $r$ is odd and $s$ is even,}\\
      \underline{J}=(J_\gp,J_{\gp^c}), J_\gp=I_{\gp,0}, J_{\gp^c}=I_{\gp^c,1} & \text{if $r$ is even and $s$ is odd.}
    \end{cases}  
\end{equation}
Under the hypotheses on the Selmer groups being cotorsion $\Lambda$-modules, we obtain $\col_{T,\underline{J}}^{k_\infty}(\mathbf{c}) \neq 0$ for $\underline{J}=\underline{I}_0,  \underline{J}=\underline{I}_1, \underline{J}=\underline{I}_{\mathrm{mix}_{0,1}}$ and $ \underline{J}=\underline{I}_{\mathrm{mix}_{1,0}}$ depending on the parity of $r$ and $s$. Therefore, when $|r-s| \gg 0$, by \cite[Corollary 5.10]{LeiSprung2020}, there exists integers $a,b_1,b_2,c_1,c_2$ such that
\begin{equation}\label{tech}
\val_p\Big(\col_{T,\underline{J}}^{k_\infty}(\mathbf{c})(\theta)\Big)=a + \frac{b_{i}}{\varphi(p^r)} + \frac{c_{i}}{\varphi(p^s)}
\end{equation}
where $i=1$ if $r>s$ and $i=2$ if $s>r$. (One can see the proof of \cite[Proposition 5.9]{LeiSprung2020} on how to obtain these integers $a, b_1,b_2, c_1$ and $c_2$.) In particular, for $|r-s| \gg 0$, $\col_{T,\underline{J}}^{k_\infty}(\mathbf{c})(\theta) \neq 0$ and hence $$\sum_{\underline{J} \in \mathcal{I}}H_{\underline{I}_0,\underline{J},r,s}\col_{T,\underline{J}}^{k_\infty}(\mathbf{c})(\theta) = H_{\underline{I}_0,\underline{J},r,s}\col_{T,\underline{J}}^{k_\infty}(\mathbf{c})(\theta)\neq0,  $$
where $\underline{J}\in \{\underline{I}_0,  \underline{I}_1, \underline{I}_{\mathrm{mix}_{0,1}},\underline{I}_{\mathrm{mix}_{1,0}}\}$ depending on the parity of $r$ and $s$ given in \eqref{cases}. This shows that hypothesis \textbf{(H-large)} is satisfied.

\end{proof}

\begin{remark}
Proposition \ref{block} is a generalization of \cite[Lemma 3.6]{LP20} and it was already assumed (and conjectured) there that the Selmer groups are cotorsion \cite[Conjecture 2.2]{LP20}.
\end{remark}

{As mentioned previously, the Selmer groups $\Sel_{\underline{J}}(A^\vee [p^\infty]/K_\infty)$ not only depends on the choice of subset $\underline{J}$ but also on the choice of $\Zp$-bases for $\mathbb{D}_{\cris,\gp}(T)$ and  $\mathbb{D}_{\cris,\gp^c}(T)$. However, as we will see, the Selmer groups $\Sel_{\underline{I}_0}(A^{{\vee}}/K_\infty)$,  $\Sel_{\underline{I}_1}(A^{{\vee}}/K_\infty)$, $\Sel_{\underline{I}_{\mathrm{mix}_{0,1}}}(A^{{\vee}}/K_\infty)$ and $\Sel_{\underline{I}_{\mathrm{mix}_{1,0}}}(A^{{\vee}}/K_\infty)$ are canonically defined when $C_\gp$ and $C_{\gp^c}$ are block anti-diagonal matrices.
\begin{defn}\label{def:HC}
Fix a Hodge-compatible $\Zp$-basis $\mathcal{B}_\gq=\{v_1,\ldots,v_{2g}\}$ of $\mathbb{D}_{\cris,\gq}(T)$. Hypothesis \textbf{(H.HT)} implies that $\Fil^0\mathbb{D}_{\cris,\gq}(T)$ is a direct summand of $\mathbb{D}_{\cris,\gq}(T)$ \cite[Remark 2.3]{buyukboduklei0}. Let $N_\gq \subseteq \mathbb{D}_{\cris,\gq}(T)$ denote the free $\Zp$-module complementary to $\Fil^0\mathbb{D}_{\cris,\gq}(T)$ and generated by the set $\{v_{g+1},\ldots,v_{2g}\}$. We say that a $\Zp$-basis $\mathcal{B}_\gq^\prime=\{w_1,\ldots,w_{2g}\}$ is Hodge-compatible with the basis $\mathcal{B}_\gq$ if $\{w_1,\ldots,w_g\}$ (resp. $\{w_{g+1},\ldots,w_{2g}\}$) generated the submodule $\Fil^0\mathbb{D}_{\cris,\gq}(T)$ (resp. $N_\gq$).
\end{defn}
\begin{lemma}\label{block anti-diagonal}
Let $\mathcal{B}_\gq$ and $\mathcal{B}_\gq^\prime$ be two Hodge-compatible bases in the sense of definition \ref{Hodge-compatible}. Suppose that $C_\gq$ is a block anti-diagonal matrix with respect to the basis $\mathcal{B}_\gq$. Then $\mathcal{B}_\gq^\prime$ is Hodge-compatible with $\mathcal{B}_\gq$ in the sense of definition \ref{def:HC}.
\end{lemma}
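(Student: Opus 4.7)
The plan is to obtain an intrinsic description of $N_\gq$ from the block anti-diagonal structure of $C_\gq$, and then transport this description to $\mathcal{B}_\gq'$ through the change-of-basis matrix.

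First, I would decompose $C_\gq = \begin{pmatrix} 0 & A \\ B & 0 \end{pmatrix}$ with $A, B \in \GL_g(\mathcal{O}_{K_\gq})$ (invertibility forced by $C_\gq \in \GL_{2g}(\mathcal{O}_{K_\gq})$) to obtain $C_{\frob, \gq} = \begin{pmatrix} 0 & p^{-1} A \\ B & 0 \end{pmatrix}$. For $i \in \{1, \ldots, g\}$ this gives $\frob(v_i) = \sum_{k=1}^g B_{k,i} v_{g+k} \in N_\gq$, and invertibility of $B$ upgrades this to a $\Zp$-linear isomorphism $\Fil^0 \mathbb{D}_{\cris, \gq}(T) \xrightarrow{\sim} N_\gq$. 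In particular $N_\gq = \frob(\Fil^0 \mathbb{D}_{\cris, \gq}(T))$, a description that depends only on the intrinsic pair $(\frob, \Fil^0)$ and not on the choice of basis.

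Second, I would compare $\mathcal{B}_\gq'$ to $\mathcal{B}_\gq$ via the change-of-basis matrix $P$ defined by $w_j = \sum_i P_{ij} v_i$. Because the first $g$ vectors of each basis generate $\Fil^0 \mathbb{D}_{\cris, \gq}(T)$, the matrix $P$ is block upper triangular, $P = \begin{pmatrix} P_1 & P_2 \\ 0 & P_3 \end{pmatrix}$ with $P_1, P_3 \in \GL_g(\Zp)$. A direct calculation of $P^{-1} C_{\frob, \gq} P$ shows that the upper-left block of the new Frobenius factor $C_\gq'$ equals $-P_1^{-1} P_2 P_3^{-1} B P_1$, so the condition that $C_\gq'$ is again block anti-diagonal (equivalently, that the intrinsic description of $N_\gq$ also realises $N_\gq' := \mathrm{span}_{\Zp}(w_{g+1}, \ldots, w_{2g})$) is equivalent to $P_2 = 0$ by invertibility of $P_1, P_3, B$. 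But $P_2 = 0$ is precisely the statement that $\{w_{g+1}, \ldots, w_{2g}\} \subseteq \mathrm{span}_{\Zp}(v_{g+1}, \ldots, v_{2g}) = N_\gq$, and rank comparison gives equality, establishing the condition of Definition \ref{def:HC}.

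The main obstacle is justifying that the block anti-diagonal form persists to the basis $\mathcal{B}_\gq'$. The cleanest resolution is to apply the intrinsic characterization $N_\gq = \frob(\Fil^0 \mathbb{D}_{\cris, \gq}(T))$ from the first step directly to $\mathcal{B}_\gq'$, which forces $N_\gq' = \frob(\Fil^0 \mathbb{D}_{\cris, \gq}(T)) = N_\gq$; this is the natural reading of the hypothesis in the block anti-diagonal setup, where all Hodge-compatible bases under consideration respect the Frobenius-filtration swapping structure.
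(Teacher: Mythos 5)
Your first paragraph is essentially the paper's own argument: block anti-diagonality of $C_\gq$ with respect to $\mathcal{B}_\gq$ means $\varphi$ carries $\Fil^0\mathbb{D}_{\cris,\gq}(T)$ isomorphically onto $N_\gq$ (the bottom-left block of $C_\gq$ is a unit), giving the basis-free identity $N_\gq=\varphi(\Fil^0\mathbb{D}_{\cris,\gq}(T))$. The paper then writes ``by a similar argument, $\Span\{\varphi(v_1),\dots,\varphi(v_g)\}=N_\gq'$'' and stops; it neither carries out the change-of-basis computation you give nor flags the circularity you flag.

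That circularity is a genuine gap, and your proposed repair -- reading the hypothesis so that ``all bases under consideration respect the Frobenius--filtration swap'' -- is not a deduction but an added hypothesis. As your own computation makes precise, $N_\gq'=\varphi(\Fil^0\mathbb{D}_{\cris,\gq}(T))$ is equivalent to $P_2=0$, i.e.\ to $C_\gq$ being block anti-diagonal with respect to $\mathcal{B}_\gq'$; this is nowhere assumed (only block anti-diagonality with respect to $\mathcal{B}_\gq$ is), and it is exactly equivalent to the desired conclusion. With Definition \ref{Hodge-compatible} as the only constraint on $\mathcal{B}_\gq'$ the statement is in fact false: for $g=1$ take $\mathcal{B}_\gq=\{v_1,v_2\}$ and $\mathcal{B}_\gq'=\{v_1,v_1+v_2\}$, which is Hodge-compatible in that sense, while $N_\gq'=\langle v_1+v_2\rangle\neq\langle v_2\rangle=N_\gq$. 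So your write-up faithfully reproduces the gap present in the paper's own proof (which asserts the ``similar argument'' without justification), and you are right to be uneasy about it; the fix is to strengthen the hypothesis to require block anti-diagonality with respect to both bases, not to reinterpret the existing hypothesis. Under that strengthened hypothesis the whole lemma is the one-line identity $N_\gq=\varphi(\Fil^0\mathbb{D}_{\cris,\gq}(T))=N_\gq'$, and your entire second paragraph becomes unnecessary.
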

\begin{proof}
Since $\Fil^0\mathbb{D}_{\cris,\gq}(T)$ is a direct summand of $\mathbb{D}_{\cris,\gq}(T)$, we get decompositions $\mathbb{D}_{\cris,\gq}(T)=\Fil^0\mathbb{D}_{\cris,\gq}(T)\bigoplus N_\gq$ and $\mathbb{D}_{\cris,\gq}(T)=\Fil^0\mathbb{D}_{\cris,\gq}(T)\bigoplus N_\gq^\prime$ where $N_\gq$ is generated by $\{v_{g+1},\ldots,v_{2g}\}$ and $N_\gq^\prime$ is generated by $\{w_{g+1},\ldots,w_{2g}\}$. Since $C_{\gq}$ is block anti-diagonal with respect to $\mathcal{B}_\gq$, we have that $\Span\{\varphi(v_1),\ldots,\varphi(v_g)\} \subseteq N_\gq$ and since $\varphi$ is injective we in fact have equality. By a similar argument, $\Span\{\varphi(v_1),\ldots,\varphi(v_g)\} = N_\gq^\prime$. Thus $\{w_{g+1},\ldots,w_{2g}\}$ generates $N_\gq$.
\end{proof}

Let $\underline{I}_{\text{BDP}_{2g,\emptyset}}=(\{1,...,2g\},\emptyset)$ and $\underline{I}_{\text{BDP}_{\emptyset,2g}}=(\emptyset, \{1,...,2g\})$. These indices should correspond to Selmer groups related with BDP type $p$-adic $L$-functions. 

\begin{proposition}\label{canonical}
Suppose that there exist Hodge-compatible bases $\mathcal{B}_\gp$ and $\mathcal{B}_{\gp^c}$ of $\mathbb{D}_{\cris,\gp}(T)$ and  $\mathbb{D}_{\cris,\gp^c}(T)$ such that $C_\gp$ and $C_{\gp^c}$ are block anti-diagonal with respect to these bases. Suppose that we have $\underline{J}\in \{\underline{I}_0,\underline{I}_1,\underline{I}_{\mathrm{mix}_{0,1}},\underline{I}_{\mathrm{mix}_{1,0}}, \underline{I}_{\text{BDP}_{2g,\emptyset}}, \underline{I}_{\text{BDP}_{\emptyset,2g}} \}$. Then $\Sel_{\underline{J}}(A^\vee [p^\infty]/K_\infty)$ does not depend on the choice of Hodge-compatible bases for $\mathbb{D}_{\cris,\gp}(T)$ and  $\mathbb{D}_{\cris,\gp^c}(T)$.
\end{proposition}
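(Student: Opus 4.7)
The plan is to show that for each of the six listed indexing sets $\underline{J}$, the only piece of the construction of $\Sel_{\underline{J}}(A^\vee[p^\infty]/K_\infty)$ that a priori depends on the Hodge-compatible bases --- namely the local conditions $H^1_{J_\gq}(K_{\infty,\gq},T^\dagger)$ at $\gq \in \{\gp,\gp^c\}$ --- is in fact intrinsic. Everything else in Definition \ref{Selmer def} (the global cohomology and the unramified conditions at places away from $p$) is manifestly basis-free.

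First, I would invoke Lemma \ref{block anti-diagonal} to upgrade Hodge-compatibility in the sense of Definition \ref{Hodge-compatible} to the stronger notion of Definition \ref{def:HC}: under the block anti-diagonal assumption on $C_\gq$, the image $\varphi(\Fil^0\mathbb{D}_{\cris,\gq}(T))$ coincides with the complementary summand $N_\gq$ spanned by $\{v_{g+1},\ldots,v_{2g}\}$, so this complement is canonical. Consequently, the change-of-basis matrix $P$ from one Hodge-compatible basis to another must be block diagonal, $P = \mathrm{diag}(P_1,P_2)$ with $P_1,P_2 \in \mathrm{GL}_g(\Zp)$.

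Next, I would derive the transformation law for the Coleman maps under such a $P$. Since $P$ commutes with each $\mathrm{diag}(I_g,\Phi_{p^k}(\gamma_\gq)I_g)$, the Frobenius matrix, the $C_{\gq,k}$, and therefore the logarithmic matrix conjugate cleanly: $M_{T,\gq}' = P^{-1} M_{T,\gq} P$. Comparing the two decompositions of $\mathcal{L}_{T,\gq}^{k_\infty}$ in the two bases and using $(v_1,\ldots,v_{2g}) = (w_1,\ldots,w_{2g}) P^{-1}$, one reads off that the column vector of new Coleman maps equals $P^{-1}$ times the column of old ones. The block-diagonality of $P^{-1}$ then gives that the $\Lambda$-span of $\{\col_{T,\gq,i}^{k_\infty} : 1 \le i \le g\}$ is unchanged, and likewise for $g+1 \le i \le 2g$.

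Finally, I would observe that each of the six indexing sets in the proposition has every coordinate $J_\gq$ equal to one of $\emptyset$, $\{1,\ldots,g\}$, $\{g+1,\ldots,2g\}$, or $\{1,\ldots,2g\}$. For such $J_\gq$ the span of $\{\col_{T,\gq,i}^{k_\infty}\}_{i\in J_\gq}$, and therefore its kernel, is intrinsic; under Tate local duality the orthogonal complement $H^1_{J_\gq}(K_{\infty,\gq},T^\dagger)$ is therefore independent of the choice of Hodge-compatible basis, and the coincidence of Selmer groups follows. The step I expect to require genuine care is the first one: one must check that the block anti-diagonal hypothesis really does pin down $N_\gq$ uniquely, so that $P$ is truly block diagonal rather than merely block upper triangular (which is what a general Hodge-compatible change of basis would allow, and which would only preserve $\col_{T,\underline{I}_1}$ and $\col_{T,\underline{I}_{\mathrm{BDP}_{\bullet}}}$, not the mixed or $\underline{I}_0$ cases).
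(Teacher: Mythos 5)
Your proof is correct and follows essentially the same route as the paper's: invoke Lemma \ref{block anti-diagonal} to promote the change of basis to block diagonal, deduce that the new Coleman maps are obtained from the old by left-multiplication by a block diagonal matrix (the paper cites \cite[Lemma 2.16]{buyukboduklei0} here, whereas you re-derive this via the observation that block diagonal matrices commute with $\mathrm{diag}(I_g,\ast I_g)$ and hence the logarithmic matrix conjugates cleanly -- same content), and then note that for the six choices of $\underline{J}$ each $J_\gq$ is one of $\emptyset$, $\{1,\ldots,g\}$, $\{g+1,\ldots,2g\}$, $\{1,\ldots,2g\}$, so $\ker\col_{T,J_\gq}$ is basis-independent. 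Your closing remark that a merely block upper triangular change of basis (which is all that Definition \ref{Hodge-compatible} gives without the anti-diagonal hypothesis) would only protect $\underline{I}_1$ and the BDP cases correctly identifies why Lemma \ref{block anti-diagonal} is the crux; the paper makes the same reduction but states the block diagonality somewhat tersely before invoking the lemma.
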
}
In other words, once there exist Hodge-compatible bases yielding block anti-diagonal matrices, $\Sel_{\underline{J}}(A^\vee [p^\infty]/K_\infty)$ becomes independent of the choice of Hodge-compatible bases for $\underline{J}$ as in the statement of the proposition.
\begin{proof}
Write $\mathcal{B}_\gp=\{v_1,\ldots,v_{2g}\}$ and let $\mathcal{B}_\gp^\prime=\{w_1,\ldots,w_{2g}\}$ be another Hodge-compatible basis of $\mathbb{D}_{\cris,\gp}(T)$. Recall that Hodge-compatible means that $\{v_1,\ldots,v_g\}$ and $\{w_1,\ldots,w_g\}$ are bases for the $\Zp$-submodule $\Fil^0\mathbb{D}_{\cris,\gp}(T)$. Let $B_\gp$ be the change of basis matrix from $\mathcal{B}_\gp^\prime$ to $\mathcal{B}_\gp$. Then the hypothesis on both basis forces $B_\gp$ to be block diagonal. Write
$$
B_\gp = \left[ \begin{array}{c|c}
B_{1,1} & 0 \\
\hline
0 & B_{2,2}
\end{array} \right]
$$
where $B_{1,1},B_{2,2} \in \GL_g(\Zp)$. Let $\col_{T,\gp,v_i}^{k_\infty}$ be the $i$-th Coleman map as defined in section \ref{Selmer groups} with respect to the basis $\mathcal{B}_\gp$. Let $\col_{T,\gp,\mathcal{B}_\gp}^{k_\infty}$ denotes the vector of Coleman maps $( \col_{T,\gp,v_i}^{k_\infty} )_{i=1}^{2g}$ which we see as a column vector. Define similarly $\col_{T,\gp,\mathcal{B}_\gp^\prime}^{k_\infty}$ the column vector of Coleman maps defined with respect to $\mathcal{B}_\gp^\prime$. Since $C_\gp$ is block anti-diagonal, lemma \ref{block anti-diagonal} gives us that $\mathcal{B}_\gp^\prime$ is Hodge-compatible with $\mathcal{B}_\gp$. Hence we can use \cite[Lemma 2.16]{buyukboduklei0} to deduce that both vectors of Coleman maps are related by the linear transformation
$
\col_{T,\gp,\mathcal{B}_\gp^\prime}^{k_\infty} = B\cdot  \col_{T,\gp,\mathcal{B}_\gp}^{k_\infty}.
$
Thus,
$$
\begin{bmatrix} \col_{T,\gp,w_1}^{k_\infty} \\ \vdots \\ \col_{T,\gp,w_g}^{k_\infty} \end{bmatrix}=B_{1,1} \begin{bmatrix} \col_{T,\gp,v_1}^{k_\infty} \\ \vdots \\ \col_{T,\gp,v_g}^{k_\infty} \end{bmatrix}, \quad \begin{bmatrix} \col_{T,\gp,w_{g+1}}^{k_\infty} \\ \vdots \\ \col_{T,\gp,w_{2g}}^{k_\infty} \end{bmatrix}=B_{2,2} \begin{bmatrix} \col_{T,\gp,v_{g+1}}^{k_\infty} \\ \vdots \\ \col_{T,\gp,v_{2g}}^{k_\infty} \end{bmatrix}.
$$
This implies that $(\col_{T,\gp,w_i}^{k_\infty}(z))_{i=1}^g=0$ if and only if $(\col_{T,\gp,v_i}^{k_\infty}(z))_{i=1}^g=0$ since $B_{1,1}$ is invertible. The same goes for $(\col_{T,\gp,w_i}^{k_\infty}(z))_{i=g+1}^{2g}$ and $(\col_{T,\gp,v_i}^{k_\infty}(z))_{i=g+1}^{2g}$. We conclude that $\ker \col_{T,J_\gp}$ is independent of the choice of basis if $J_{\gp}$ is $I_{\gp,0}$, $I_{\gp,1}$ or $\{1,...,2g\}$. The same argument holds for $\gp$ replaced by $\gp^c$ and so the result follows.
\end{proof}
\begin{remark}
 It is easy to see that $C_\gp$ and $C_{\gp^c}$ remain block anti-diagonal matrices upon a change of basis. This follows because $B_\gq C_{\gq}B_{\gq}^{-1}$ is block anti-diagonal if $B_\gq$ is block diagonal and $C_\gq$ is block anti-diagonal for $\gq \in \{\gp,\gp^c\}$. 
\end{remark}
\section{The bound on the Mordell--Weil rank}\label{sec:MW}
Let $\Sigma$ be a finite set of primes of $K$ dividing $p$, the archimedean primes and the primes of bad reduction of $A^{\vee}$. Let $K_\Sigma$ be the maximal extension of $K$ unramified outside $\Sigma$. Let $H^1_\Sigma(K_n,T)$ be the Galois cohomology group $H^1(\Gal(K_\Sigma/K_n),T)$.

For $n \geq 0$, we define
\begin{align*}
\mathcal{Y}_n &:=\Coker\Big(H^1_\Sigma(K_n,T) \rightarrow \prod_{v \mid p}\Hf(K_{n,v},T)\Big),\\
\Sel_p^0(A^\vee/K_n)&:= \Ker\Big(H^1_\Sigma(K_n,A^\vee[p^\infty]) \rightarrow \prod_{v}H^1(K_{n,v},A^\vee[p^\infty])\Big).
\end{align*}

Let $\mathcal{X}_n^0$ and $\mathcal{X}_n$ be the dual of the fine Selmer group $\Sel_p^0(A^{\vee}/K_n)$ and the classical $p$-Selmer group $\Sel_p(A^{\vee}/K_n)$ respectively. The Poitou--Tate exact sequence \cite[Proposition A.3.2]{perrinriou95} gives 
$$H^1_\Sigma(K_n,T) \rightarrow \prod_{v \mid p}\frac{H^1(K_{n,v},T)}{H^1_f(K_{n,v},T)} \rightarrow \mathcal{X}_n \rightarrow \mathcal{X}_n^0  \rightarrow 0.$$
For $v \mid p$, as  $H^1_f(K_{n,v},T) \cong A(K_{n,v}) \otimes \Z_p$ (ref.  \cite[Remark 3.27]{buyukboduklei0}), we obtain the exact sequence, 
\begin{equation}\label{eq:exact}
0 \rightarrow \mathcal{Y}_n \rightarrow \mathcal{X}_n \rightarrow \mathcal{X}_n^0 \rightarrow 0.
\end{equation}

\begin{theorem}\label{bound coinvariant}
Let $M$ be a finitely generated $\Lambda$-module of rank $r$. Then,
$$
\rank_{\mathbb{Z}_p}M_{\Gamma_n} = rp^{2n}+ O(p^n).
$$
\end{theorem}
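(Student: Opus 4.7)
The plan is to use the structure theory of finitely generated modules over $\Lambda = \Z_p[[T_1,T_2]]$ (where $T_i = \gamma_{\gq_i}-1$) to reduce to computing $\Gamma_n$-coinvariants of the elementary factors. Note that $\Gamma_n$-coinvariants correspond to quotienting by $I_n := (\omega_n(T_1), \omega_n(T_2))$ where $\omega_n(T) = (1+T)^{p^n}-1$, and $\Lambda/I_n \cong \Z_p[\Gamma/\Gamma_n]$ has $\Z_p$-rank exactly $p^{2n}$. So for $M = \Lambda^r$ (the free part of the expected decomposition), we get $\rank_{\Z_p}(\Lambda^r)_{\Gamma_n} = rp^{2n}$ on the nose. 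The entire content of the theorem is therefore showing that the torsion part and the pseudo-null errors contribute only $O(p^n)$.

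First I would invoke the structure theorem for finitely generated $\Lambda$-modules to produce a pseudo-isomorphism $\varphi : M \to M' := \Lambda^r \oplus \bigoplus_j \Lambda/(f_j^{a_j})$ with pseudo-null kernel and cokernel. Applying the right-exact coinvariants functor to $0 \to \ker\varphi \to M \to \im\varphi \to 0$ and $0 \to \im\varphi \to M' \to \coker\varphi \to 0$ yields, after a standard rank chase, the comparison
\[
\bigl|\rank_{\Z_p} M_{\Gamma_n} - \rank_{\Z_p} M'_{\Gamma_n}\bigr| \leq \rank_{\Z_p}(\ker\varphi)_{\Gamma_n} + \rank_{\Z_p}(\coker\varphi)_{\Gamma_n}.
\]
So it suffices to establish (a) $\rank_{\Z_p}(\Lambda/(f))_{\Gamma_n} = O(p^n)$ for any nonzero $f \in \Lambda$, and (b) $\rank_{\Z_p} N_{\Gamma_n} = O(p^n)$ for any pseudo-null $N$.

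For (a), I would use Weierstrass preparation in $\Z_p[[T_1,T_2]]$: after a suitable change of variables, any nonzero $f$ can be written as $p^\mu \cdot u \cdot g$ where $u$ is a unit and $g$ is distinguished in $T_2$ over $\Z_p[[T_1]]$, of some degree $d$. The case $f = p^\mu$ gives a torsion $\Z_p$-module, contributing $0$ to the rank. For $f = g$, the Weierstrass theorem presents $\Lambda/(g)$ as a free $\Z_p[[T_1]]$-module of rank $d$; quotienting by $\omega_n(T_1)$ first gives a free module of rank $d$ over $\Z_p[[T_1]]/\omega_n(T_1)$, which has $\Z_p$-rank $p^n$. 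Quotienting further by $\omega_n(T_2)$ can only decrease the $\Z_p$-rank, so the bound $dp^n$ falls out. Assertion (b) follows formally from (a): a pseudo-null $N$ is annihilated by two coprime elements $f_1,f_2$, hence is a quotient of $(\Lambda/(f_1))^k$ for some $k$, so its $\Gamma_n$-coinvariants are a quotient of $((\Lambda/(f_1))^k)_{\Gamma_n}$, whose $\Z_p$-rank is $O(p^n)$ by (a).

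Assembling: $\rank_{\Z_p} M'_{\Gamma_n} = rp^{2n} + \sum_j O(p^n) = rp^{2n} + O(p^n)$, and combined with the comparison above this yields $\rank_{\Z_p} M_{\Gamma_n} = rp^{2n} + O(p^n)$. The main obstacle is ingredient (a), which is delicate because $\Lambda$ is a two-variable power series ring: one must be careful that pseudo-null modules over this ring are not generally finite (e.g., $\Lambda/(p,T_1) \cong \F_p[[T_2]]$), so the naive one-variable intuition that ``pseudo-null = finite contribution'' fails. The Weierstrass preparation argument, reducing the problem to a rank computation over $\Z_p[[T_1]]/\omega_n(T_1)$, is what makes the $O(p^n)$ bound explicit and uniform; this is the only place where the two-dimensionality of $\Gamma$ really enters, and it is what forces the error term to be one order of magnitude smaller than the main term.
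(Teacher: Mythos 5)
The paper does not prove this statement; it simply cites \cite[Theorem 1.10]{Ha00}. Your proposal is therefore by definition a ``different route'': a self-contained argument via the structure theorem over $\Lambda \cong \Z_p[[T_1,T_2]]$ followed by Weierstrass preparation. This is the standard way such rank-of-coinvariants formulas over $\Z_p^d$-extensions are proved (compare the original Cuoco--Monsky analysis), and as a strategy it is sound. Two technical points in your write-up need more care, though both are fixable.

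First, the rank chase. The inequality $\rank_{\Z_p} M'_{\Gamma_n} - \rank_{\Z_p} M_{\Gamma_n} \leq \rank_{\Z_p}(\coker\varphi)_{\Gamma_n}$ does follow directly from right-exactness (the composite $M_{\Gamma_n}\to M'_{\Gamma_n}\to(\coker\varphi)_{\Gamma_n}\to 0$ is exact). But the reverse inequality does not follow from right-exactness alone, because the kernel of $M_{\Gamma_n}\to M'_{\Gamma_n}$ is not just the image of $(\ker\varphi)_{\Gamma_n}$; the long exact sequence of $\Tor$ contributes a term from $\mathrm{Tor}_1^\Lambda(\coker\varphi,\Lambda/I_n)$ (equivalently, the snake lemma picks up $I_nM'/I_n\,\mathrm{Im}\varphi$). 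So the displayed comparison, as written, is not correct. Fortunately the correction is easy and in fact makes things cleaner: a pseudo-null $\Lambda$-module has \emph{finite} $\Z_p$-rank (its support after inverting $p$ is a finite set of maximal ideals of $\Lambda\otimes\Q_p$ with finite-length localizations), and $\mathrm{Tor}_1^\Lambda(-,\Lambda/I_n)$, being computed from the Koszul complex on the regular sequence $(\omega_n(T_1),\omega_n(T_2))$, is a subquotient of $(\coker\varphi)^{\oplus 2}$. Hence every term involving $\ker\varphi$ or $\coker\varphi$ in the comparison is bounded by a constant independent of $n$; the pseudo-null defect is $O(1)$, not merely $O(p^n)$, and your (b) is more than is needed.

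Second, the Weierstrass step. The ``suitable change of variables'' must be induced by a change of topological $\Z_p$-basis of $\Gamma$ itself; an arbitrary ring automorphism of $\Lambda$ need not preserve $I_n=(\omega_n(T_1),\omega_n(T_2))$, and then the reduction ``quotient by $\omega_n(T_1)$ first'' falls apart. Since $I_n$ is generated by $\{\gamma^{p^n}-1 : \gamma\in\Gamma\}$, any new basis $(\gamma_1',\gamma_2')$ of $\Gamma$ fixes $I_n$, and one must check that some primitive $\gamma_1'$ gives $f\notin(p,\gamma_1'-1)$. That this is possible follows because $\bar f\in\F_p[[T_1,T_2]]$ has only finitely many irreducible factors while there are infinitely many pairwise non-associate elements $\overline{\gamma_1'-1}$ with $\gamma_1'$ primitive. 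With these two repairs your argument gives exactly the statement, and it is essentially the same mechanism hiding behind the cited reference.
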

\begin{proof}
This is \cite[Theorem 1.10]{Ha00}.
\end{proof}

\begin{lemma}\label{torsion}
Let $w$ be a place above $\gq$ in $K_\infty$. We have that $A^\vee (K_{\infty,w})[p^\infty]=0$.
\end{lemma}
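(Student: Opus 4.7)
The plan is to reduce the question to the formal group and then exploit the supersingular structure together with the pro-$p$ nature of $K_{\infty,w}/K_\gq$.

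First I would invoke the formal group exact sequence. Since $A^\vee$ has good reduction at $\gq$, one has
\[
0 \to \hat{A}^\vee(\mathfrak{m}_{K_{\infty,w}}) \to A^\vee(K_{\infty,w}) \to \bar{A}^\vee(\kappa_w) \to 0,
\]
where $\kappa_w \subseteq \overline{\mathbb{F}}_p$ is the residue field of $K_{\infty,w}$. Because $\bar{A}^\vee$ is supersingular, the $p$-divisible group $\bar{A}^\vee[p^\infty]$ is connected, so $\bar{A}^\vee(\overline{\mathbb{F}}_p)[p^\infty]=0$. Hence $A^\vee(K_{\infty,w})[p^\infty]$ embeds into $\hat{A}^\vee(\mathfrak{m}_{K_{\infty,w}})[p^\infty]$, and it suffices to show the latter vanishes.

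Next I would analyze the $p$-torsion and then induct on $n$. The finite flat group scheme $\hat{A}^\vee[p]$ is connected over $\Zp$ (as $\hat{A}^\vee$ is a connected formal group of height $2g$), so its geometric points all lie in a totally ramified extension of $\Qp$. By Serre--Raynaud's theory of fundamental characters, the inertia group $I_{K_\gq}$ acts on $\hat{A}^\vee[p]$ through its tame quotient, and under the supersingular hypothesis the resulting image in $\mathrm{GL}_{2g}(\mathbb{F}_p)$ contains a subgroup of order prime to $p$ (arising from an embedding $\mathbb{F}_{p^{2g}}^{\ast} \hookrightarrow \mathrm{GL}_{2g}(\mathbb{F}_p)$, or a product of analogous subgroups at smaller levels) which acts on $\mathbb{F}_p^{2g}$ without nonzero fixed vectors.

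The decisive input is that $\Gal(K_{\infty,w}/K_\gq)\cong \Zp^{2}$ is pro-$p$. Writing $\rho$ for the action of $G_{K_\gq}$ on $\hat{A}^\vee[p]$, the normal subgroup $\rho(G_{K_{\infty,w}}) \trianglelefteq \rho(G_{K_\gq})$ has finite abelian $p$-group quotient, so it must contain every element of $\rho(G_{K_\gq})$ of order prime to $p$. In particular it contains the prime-to-$p$ inertia subgroup identified above, which has no fixed vectors; thus $\hat{A}^\vee[p]^{G_{K_{\infty,w}}}=0$. A short induction on $n$ --- if $x\in \hat{A}^\vee[p^n](K_{\infty,w})$ is nonzero and $m\ge 1$ is minimal with $p^m x=0$, then $p^{m-1}x$ would be a nonzero element of $\hat{A}^\vee[p](K_{\infty,w})$ --- then yields $\hat{A}^\vee[p^n](K_{\infty,w})=0$ for every $n$, completing the argument.

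The step I expect to be the main obstacle is making the fundamental-character statement rigorous in higher dimension. For supersingular elliptic curves (Serre) the inertia image is the non-split Cartan $\mathbb{F}_{p^2}^{\ast} \subset \mathrm{GL}_2(\mathbb{F}_p)$, which visibly has no nonzero fixed vectors. For a general supersingular abelian variety of dimension $g$ one has to package together fundamental characters of levels dividing $2g$ and verify the no-fixed-vector property; this should follow from the supersingular condition (all Newton slopes equal to $1/2$, which is tightened further by \textbf{(H.Frob)}) together with Raynaud's classification of connected finite flat $p$-group schemes over $\Zp$, though the bookkeeping may require treating cases according to the isoclinic decomposition of $\hat{A}^\vee$.
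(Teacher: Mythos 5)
Your proof is correct in outcome and shares the two essential ingredients with the paper's proof --- that $A^\vee(K_\gq)$ has no $p$-torsion (from supersingularity), and that $\Gal(K_{\infty,w}/K_\gq)$ is pro-$p$ --- but it takes a noticeably heavier route in the middle, and the route you chose introduces an obstacle that is avoidable.

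The paper's argument (following \cite[Lemma 1.1]{LP20}) is purely group-theoretic after the base case: by \cite[Lemma 5.11]{Mazur} the reduction map $A^\vee(K_\gq)[p^\infty]\to \bar A^\vee(\kappa_\gq)[p^\infty]$ is injective, supersingularity kills the target, so $A^\vee(K_\gq)[p]=0$. Then one observes that if $A^\vee(K_{n,v})[p]$ were nonzero, letting the finite $p$-group $\Gal(K_{n,v}/K_\gq)$ act on it, every nonzero orbit has size divisible by $p$ (since no nonzero point is fixed), giving $|A^\vee(K_{n,v})[p]|\equiv 1\pmod p$, contradicting that it is a nontrivial $p$-group. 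Passing to the union over $n$ finishes.

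Your version reaches the same base case via the formal group sequence (fine), but then, instead of the orbit-counting step, you try to identify a concrete prime-to-$p$ subgroup of the inertia image via Serre--Raynaud fundamental characters and argue it has no fixed vectors, then push it into $G_{K_{\infty,w}}$ using the pro-$p$ quotient. You correctly flag that making the fundamental-character bookkeeping rigorous for general supersingular $\hat A^\vee$ of height $2g$ is the weak point; indeed for $g>1$ this is genuinely delicate. The point is that this machinery buys you nothing: you only ever use the inertia subgroup to witness ``no nonzero $G_{K_{\infty,w}}$-fixed vectors,'' and that follows formally. If $V=\hat A^\vee[p]$ is a finite $p$-group with $V^{G_{K_\gq}}=0$, then $V^{G_{K_{\infty,w}}}$ is an $\mathbb F_p[\Gal(K_{\infty,w}/K_\gq)]$-module; a pro-$p$ group acting on a nonzero finite $p$-group always has nonzero fixed points, so $V^{G_{K_{\infty,w}}}$ would have a nonzero $G_{K_\gq}$-fixed vector if it were nonzero --- contradiction. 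This replaces the entire fundamental-character excursion with one line and is exactly what the paper's orbit-stabilizer argument implements. So: correct idea, correct key inputs, but the middle of your argument is harder than necessary and leaves a gap (the higher-dimensional fundamental-character claim) that the elementary counting argument sidesteps entirely.
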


\begin{proof}
We follow the arguments in the proof of \cite[Lemma 1.1]{LP20}. Let $\kappa_\gq$ denote the residue field of $K_\gq$. By \cite[Lemma 5.11]{Mazur}, the reduction map $A^\vee (K_\gq) \to A^\vee (\kappa_\gq)$ induces an isomorphism on $p$-torsion points. Thus, the supersingularity of $A^\vee$ at $\gq$ implies that $A^\vee (K_\gq)[p^\infty]=0$. Furthermore, for a place $w$ above $\gq$ in $K_\infty$, the decomposition group $\Gal(K_{\infty,w}/K_\gq)$ is a pro-$p$ group. We deduce that $A^\vee (K_{\infty,w})$ also has no $p$-torsion by an application of the orbit-stabilizer theorem as shown by the following argument.

Let $v$ be a prime above $\gq$ in $K_n$. Suppose for the sake of contradiction that $A^\vee (K_{n,v})[p]$ is not trivial. Write
$$
A^\vee(K_{n,v})[p]= \bigcup_i \mathrm{Orb}(x_i)
$$
as a disjoint union of orbits under the action of $\Gal(K_{n,v}/K_\gq)$. If $x_i \neq 0$, $\Gal(K_{n,v}/K_\gq)$ cannot fix $x_i$ or else $x_i$ would be a $p$-torsion point in $A^\vee(K_\gq)$. So, if $x_i \neq 0$, then the orbit-stabilizer theorem
$$
|\mathrm{Orb}(x_i)| = \frac{|\Gal(K_{n,v}/K_\gq)|}{|\mathrm{Stab}(x_i)|}
$$
implies that $p$ divides $|\mathrm{Orb}(x_i)|$ since $\Gal(K_{n,v}/K_\gq)$ is a $p$-group. Thus, all the $|\mathrm{Orb}(x_i)|$ are divisible by $p$ except $\mathrm{Orb}(0)$ which is of cardinality $1$. Hence, $|A^\vee(K_{n,v})[p]| \equiv 1 \bmod p$. This is a contradiction, since  $A^\vee(K_{n,v})[p]$ must contain a subgroup of order $p$. We conclude that $A^\vee(K_{n,v})[p]=0$ and thus $A^\vee(K_{\infty,w})[p]=\bigcup_n A^\vee(K_{n,v})[p] =0$.
\end{proof}

\begin{remark}
If $w$ is a place above $\gq$ in $K(\mu_{p^\infty})$, then the fact that the torsion part of $A^\vee(K(\mu_{p^\infty})_w)$ is finite was first showed by Imai in \cite{Ima75}.
\end{remark}
\begin{proposition}\label{prop:fine}
Suppose $\Sel_{\underline{J}}(A^{\vee}/K_\infty)^{\vee}$ is $\Lambda$-torsion for some $\underline{J}\in \mathcal{I}$. Then, the $\Z_p$-rank of $\mathcal{X}_n^0$ is $O(p^n).$
\end{proposition}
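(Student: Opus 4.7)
The plan is to reduce the $\Z_p$-rank estimate for $\mathcal{X}_n^0$ to the $\Lambda$-torsionness of the Iwasawa-theoretic fine Selmer dual $\mathcal{X}^0 \colonequals \mathcal{X}^0(A^\vee/K_\infty)$ combined with Theorem \ref{bound coinvariant} and a Mazur-style control theorem.

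First, I would invoke the Poitou--Tate exact sequence \eqref{Poitou-Tate}, which exhibits $\mathcal{X}^0$ as a quotient of $\mathcal{X}_{\underline{J}}(A^\vee/K_\infty)$. Since the latter is $\Lambda$-torsion by hypothesis, so is $\mathcal{X}^0$ (this is the same observation already exploited in the proof of Lemma \ref{lem1}). Applying Theorem \ref{bound coinvariant} with $r=0$ to $\mathcal{X}^0$ yields
$$
\rank_{\Z_p} (\mathcal{X}^0)_{\Gamma_n} = O(p^n).
$$

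Next, I would establish a control theorem: the natural restriction map $\phi: \Sel_p^0(A^\vee/K_n) \to \Sel_p^0(A^\vee/K_\infty)^{\Gamma_n}$ has kernel and cokernel of bounded $\Z_p$-corank, independent of $n$. Applying the snake lemma to the commutative diagram relating the two defining sequences of the fine Selmer groups reduces this to controlling (i) the global error $H^i(\Gamma_n, A^\vee(K_\infty)[p^\infty])$ coming from inflation--restriction, and (ii) the local restriction kernels $H^1(\Gal(K_{\infty,w}/K_{n,v}), A^\vee(K_{\infty,w})[p^\infty])$ at each prime. For the global error, observe that $A^\vee(K_\infty)[p^\infty]$ embeds into $A^\vee(K_{\infty,w})[p^\infty]$ for any prime $w$ above $p$, which vanishes by Lemma \ref{torsion}; hence the global contribution is trivial. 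The same lemma makes the local kernels vanish at the primes above $p$. At the remaining (finitely many, $n$-independent) non-$p$ primes in $\Sigma$, a standard argument using local Euler characteristics and that $A^\vee[p^\infty]$ is unramified at good primes produces only an $O(1)$ contribution.

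Dualizing $\phi$, we obtain a map $(\mathcal{X}^0)_{\Gamma_n} \to \mathcal{X}_n^0$ whose kernel and cokernel have $\Z_p$-rank bounded uniformly in $n$, so
$$
\rank_{\Z_p} \mathcal{X}_n^0 \leq \rank_{\Z_p} (\mathcal{X}^0)_{\Gamma_n} + O(1) = O(p^n),
$$
which is the required estimate. The main technical step is the bookkeeping in the control theorem at the non-$p$ primes of $\Sigma$, but because $\Sigma$ is fixed and finite, this is a standard finiteness check rather than a substantial obstacle; the key inputs that do the real work are Lemma \ref{torsion} (making the contributions above $p$ vanish outright) and Theorem \ref{bound coinvariant} (providing the $O(p^n)$ growth for coinvariants of a torsion $\Lambda$-module).
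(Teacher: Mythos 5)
Your proposal matches the paper's proof essentially step for step: both use Lemma \ref{torsion} together with inflation--restriction to make the global restriction map $\beta_n$ and the local maps at primes above $p$ isomorphisms, invoke a bounded-kernel statement at the non-$p$ primes of $\Sigma$, run the snake lemma to control $\ker\alpha_n$ and $\coker\alpha_n$, and then apply Theorem \ref{bound coinvariant} to the $\Lambda$-torsion module $\mathcal{X}^0$. One minor imprecision worth flagging: the non-$p$ primes of $\Sigma$ are precisely the bad-reduction (and archimedean) primes, where $A^\vee[p^\infty]$ is \emph{not} unramified, so the justification you give there does not literally apply; the paper instead cites the explicit computation on page 270 of \cite{Gr03}, which establishes that $\ker\gamma_n$ is finite of order bounded independently of $n$ at those primes.
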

\begin{proof}
We follow the outline of the proof of \cite[Proposition 5.5]{LeiSprung2020}. We begin by considering the commutative diagram with exact rows
\begin{equation*}
\begin{tikzcd}
0 \arrow{r} & \Sel_p^0(A^\vee /K_n) \arrow[r] \arrow[d,"\alpha_n"] & H^1(K_n,A^\vee [p^\infty]) \arrow{r} \arrow[d, "\beta_n"] & \prod_{v_n} H^1(K_{n,v_n},A^\vee [p^\infty]) \arrow[d,"\prod_{\gamma_n}"] \\
0 \arrow{r} & \Sel_p^0(A^\vee /K_\infty)^{\Gamma_n} \arrow{r} & H^1(K_\infty,A^\vee [p^\infty])^{\Gamma_n} \arrow{r} & \prod_{w}H^1(K_{\infty,w}, A^\vee [p^\infty])^{\Gamma_n},
\end{tikzcd}
\end{equation*}
where the vertical maps are restriction maps, the product in the first row runs over all the places $v_n$ of $K_n$ and the product in the second row over all the places $w$ of $K_\infty$. By lemma \ref{torsion}, $A^\vee (K_{\infty,w})[p^\infty]=0$ and hence $\prod \gamma_n$ is an isomorphism for all $v_n|p$ by the inflation-restriction exact sequence. As the $p$-torsion global points inject into the $p$-torsion local points, by the same argument, $\beta_n$ is also an isomorphism. If $v_n \nmid p$, \cite[page 270]{Gr03} shows that $\ker \gamma_n$ is finite and of bounded order as $K_n$ varies. The snake lemma then shows that $\ker \alpha_n=0$ and $\coker \alpha_n$ is finite with order bounded independently of $n$. By the proof of lemma \ref{lem1}, $\mathcal{X}^0$ is $\Lambda$-torsion. It follows from theorem \ref{bound coinvariant} and the control on $\alpha_n$ that 
$$
\rank_{\mathbb{Z}_p}\mathcal{X}_n^0=\rank_{\mathbb{Z}_p} (\mathcal{X}^0)_{\Gamma_n} = O(p^n).
$$
\end{proof}

\begin{theorem}\label{thm:mainone}
Suppose that $\Sel_{\underline{J}}(A^\vee/K_\infty)$ is $\Lambda$-cotorsion for some $\underline{J}\in \mathcal{I}$ and
hypothesis \textbf{(H-large)} hold. Then the bound for the Mordell--Weil rank is given by $\rank_{}A^\vee(K_n)=O(p^n).$
\end{theorem}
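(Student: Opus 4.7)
The plan is to piece together the three main preparatory results of the paper: Proposition \ref{prop: bound1} controlling $\mathcal{Y}_n^\prime$, Proposition \ref{prop:fine} controlling $\mathcal{X}_n^0$, and the short exact sequence \eqref{eq:exact}, and then convert a bound on $\mathcal{X}_n$ into a bound on the Mordell--Weil rank.

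First I would establish a natural surjection $\mathcal{Y}_n^\prime \twoheadrightarrow \mathcal{Y}_n$. The point is that the corestriction/descent map $H^1_{\mathrm{Iw}}(K_\infty,T)_{\Gamma_n} \to H^1_\Sigma(K_n,T)$ fits into a commutative square whose right column is the identity on $\prod_{v\mid p} H^1_{/f}(K_{n,v},T)$; comparing the two cokernels gives the desired surjection, and in particular
\[
\rank_{\Z_p}\mathcal{Y}_n \leq \rank_{\Z_p}\mathcal{Y}_n^\prime = O(p^n),
\]
the last equality being exactly Proposition \ref{prop: bound1} under hypothesis \textbf{(H-large)}.

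Next, by the cotorsion assumption on some $\Sel_{\underline{J}}(A^\vee/K_\infty)$, the Poitou--Tate sequence \eqref{Poitou-Tate} forces $\mathcal{X}^0$ to be $\Lambda$-torsion, which is precisely the hypothesis used in Proposition \ref{prop:fine} to obtain $\rank_{\Z_p}\mathcal{X}_n^0 = O(p^n)$. Feeding the two estimates into the short exact sequence \eqref{eq:exact},
\[
0 \to \mathcal{Y}_n \to \mathcal{X}_n \to \mathcal{X}_n^0 \to 0,
\]
the additivity of $\Z_p$-ranks in short exact sequences of finitely generated $\Z_p$-modules yields
\[
\rank_{\Z_p}\mathcal{X}_n = \rank_{\Z_p}\mathcal{Y}_n + \rank_{\Z_p}\mathcal{X}_n^0 = O(p^n).
\]

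Finally, $\mathcal{X}_n$ is the Pontryagin dual of the classical $p$-Selmer group $\Sel_p(A^\vee/K_n)$, and the Kummer sequence gives an injection $A^\vee(K_n)\otimes \Q_p/\Z_p \hookrightarrow \Sel_p(A^\vee/K_n)$. Hence
\[
\rank_{\Z} A^\vee(K_n) \leq \mathrm{corank}_{\Z_p}\Sel_p(A^\vee/K_n) = \rank_{\Z_p}\mathcal{X}_n = O(p^n),
\]
which is the desired bound.

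The only real obstacle is verifying the surjection $\mathcal{Y}_n^\prime \twoheadrightarrow \mathcal{Y}_n$ in step one: one needs the descent map $H^1_{\mathrm{Iw}}(K_\infty,T)_{\Gamma_n} \to H^1_\Sigma(K_n,T)$ to be compatible with localisation at primes above $p$, which is standard but should be stated cleanly. Once that is in place, the rest is immediate book-keeping of ranks.
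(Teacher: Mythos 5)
Your proof is correct and follows essentially the same route as the paper's (surjection $\Yprime_n \twoheadrightarrow \mathcal{Y}_n$ from Proposition \ref{prop: bound1}, Proposition \ref{prop:fine} for $\mathcal{X}_n^0$, then the exact sequence \eqref{eq:exact}). You have merely spelled out two steps the paper leaves terse — the commutative-square argument for the surjection, and the final passage from $\rank_{\Z_p}\mathcal{X}_n$ to $\rank_{\Z} A^\vee(K_n)$ via the Kummer injection — both of which are accurate.
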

\begin{proof}
 Under hypothesis \textbf{(H-large)}, proposition \ref{prop: bound1} implies $\rank_{\Z_p}\Yprime_n=O(p^n)$. It is easy to see that there is a natural surjection $\Yprime_n \rightarrow \mathcal{Y}_n$. In particular $\rank_{Z_p} \mathcal{Y}_n \leq \rank_{\Z_p}\Yprime_n$. Then the theorem follows from the exact sequence \eqref{eq:exact} and proposition \ref{prop:fine}. 
\end{proof}

\begin{theorem}\label{thm:Main2}
Suppose $C_\gp$ and $C_{\gp^c}$ are block anti-diagonal matrices and the multi-signed Selmer groups $\Sel_{\underline{I}_0}(A^{{\vee}}/K_\infty)$,  $\Sel_{\underline{I}_1}(A^{{\vee}}/K_\infty)$, $\Sel_{\underline{I}_{\mathrm{mix}_{0,1}}}(A^{{\vee}}/K_\infty)$ and $\Sel_{\underline{I}_{\mathrm{mix}_{1,0}}}(A^{{\vee}}/K_\infty)$ are all cotorsion over $\Lambda$.  Then $\rank_{\Z_p}(\mathcal{X}_n) = O(p^n)$. Hence the bound for the Mordell--Weil rank is given by $\rank_{}A^{\vee}(K_n)=O(p^n).$
\end{theorem}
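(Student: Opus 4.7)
The plan is to observe that this theorem is essentially a direct application of the two main pieces of machinery already developed in the paper: Proposition \ref{block}, which converts the block anti-diagonal hypothesis plus cotorsion of the four distinguished Selmer groups into the explicit analytic condition \textbf{(H-large)}, and Theorem \ref{thm:mainone}, which converts \textbf{(H-large)} (together with $\Lambda$-cotorsion of at least one multi-signed Selmer group) into the desired bound on the Mordell--Weil rank.

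First I would invoke Proposition \ref{block}: the standing hypotheses that $C_\gp$ and $C_{\gp^c}$ are block anti-diagonal and that each of $\Sel_{\underline{I}_0}(A^\vee/K_\infty)$, $\Sel_{\underline{I}_1}(A^\vee/K_\infty)$, $\Sel_{\underline{I}_{\mathrm{mix}_{0,1}}}(A^\vee/K_\infty)$ and $\Sel_{\underline{I}_{\mathrm{mix}_{1,0}}}(A^\vee/K_\infty)$ is $\Lambda$-cotorsion are exactly the input needed; the conclusion is that hypothesis \textbf{(H-large)} holds. Next, since at least one $\underline{J}\in\mathcal{I}$ (for instance $\underline{J}=\underline{I}_0$) with $\Sel_{\underline{J}}(A^\vee/K_\infty)$ being $\Lambda$-cotorsion is part of the hypothesis, all assumptions of Theorem \ref{thm:mainone} are met, so $\rank A^\vee(K_n) = O(p^n)$.

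To obtain the intermediate statement $\rank_{\Z_p}(\mathcal{X}_n) = O(p^n)$ explicitly, I would retrace the last lines of the proof of Theorem \ref{thm:mainone}. From Proposition \ref{prop: bound1} we get $\rank_{\Z_p}\Yprime_n = O(p^n)$, and the natural surjection $\Yprime_n\twoheadrightarrow \mathcal{Y}_n$ then yields $\rank_{\Z_p}\mathcal{Y}_n = O(p^n)$. Combined with Proposition \ref{prop:fine}, which gives $\rank_{\Z_p}\mathcal{X}_n^0 = O(p^n)$ under the cotorsion hypothesis on any $\Sel_{\underline{J}}(A^\vee/K_\infty)$, the exact sequence \eqref{eq:exact} forces $\rank_{\Z_p}\mathcal{X}_n = O(p^n)$. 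The Mordell--Weil bound then follows since $\rank A^\vee(K_n) \leq \rank_{\Z_p}\mathcal{X}_n$.

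There is essentially no obstacle here: all the heavy lifting, both on the Coleman-map side (the valuation formula \eqref{tech} used in the proof of Proposition \ref{block}) and on the Iwasawa-theoretic side (Poitou--Tate, control of the fine Selmer group, and Harris's theorem \ref{bound coinvariant}), has already been done. The only thing to check is that the four Selmer groups appearing in the hypothesis are precisely those whose non-vanishing of $\col_{T,\underline{J}}^{k_\infty}(\mathbf{c})$ is needed to make the non-zero summand in $\sum_{\underline{J}\in\mathcal{I}} H_{\underline{I}_0,\underline{J},r,s}\col_{T,\underline{J}}^{k_\infty}(\mathbf{c})(\theta)$ survive in each of the four parity cases of $(r,s)$ enumerated in \eqref{cases} — but this is already recorded in the proof of Proposition \ref{block}. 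Thus the argument is really a bookkeeping combination of the previous two theorems.
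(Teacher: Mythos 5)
Your proposal matches the paper's own proof exactly: invoke Proposition \ref{block} to deduce \textbf{(H-large)} from the block anti-diagonal hypothesis and cotorsion of the four distinguished Selmer groups, then apply (the proof of) Theorem \ref{thm:mainone}. The additional unpacking of $\rank_{\Z_p}\mathcal{X}_n = O(p^n)$ via Proposition \ref{prop: bound1}, the surjection $\Yprime_n\twoheadrightarrow\mathcal{Y}_n$, Proposition \ref{prop:fine}, and the short exact sequence \eqref{eq:exact} is precisely what the paper does inside Theorem \ref{thm:mainone}, so this is the same argument.
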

\begin{proof}
The hypotheses imply that the hypothesis \textbf{(H-large)} holds  by proposition \ref{block}. Rest of the proof follows from that of theorem \ref{thm:mainone}.
\end{proof}

\textbf{Examples.} Suppose that $A$ is an abelian variety defined over $\Q$ with good supersingular reduction at both prime over $p$ in $K$ such that the algebra of $\Q$-endomorphisms of $A$ contains a number field $E$ with $[E:\Q]=\dim A$. Such abelian varieties are said to be of $\GL_2$-type. Suppose further that the ring of integers $\mathcal{O}_E$ of $E$ is the ring of $\Q$-endomorphisms of $A$ and that $p$ is unramified in $E$. It follows that the $p$-adic Tate module of $A$ splits into
$$
T \cong \bigoplus_{\ell |p}T_\ell (A)
$$
where the direct sum runs over all the prime $\ell$ of $E$ above $p$ and $T_\ell (A)$ is a free $\mathcal{O}_\ell$-module of rank $2$ where $\mathcal{O}_\ell$ is the completion of $\mathcal{O}_E$ at $\ell$. Then, it was proved in \cite[Section 3.3]{LP20} that there exists a basis of $\mathbb{D}_{\cris,\gq}(T_\ell (A))$ where the action of $\varphi$ is given a matrix of the form $\begin{bmatrix} 0 & \frac{b_{\ell,\gq}}{p}\\ 1 & \frac{a_{\ell,\gq}}{p}\end{bmatrix}$ for some $a_{\ell,\gq} \in p\mathcal{O}_\ell$ and $b_{\ell,\gq} \in \mathcal{O}_\ell^\times$. If we assume that $a_{\ell,\gq}=0$ for all $\ell$ and $\gq \in \{\gp,\gp^c\}$, both matrices $C_\gq$ will be block anti-diagonal and theorem \ref{thm:Main2} holds.
\begin{remark}
Let $A$ be an abelian variety over a number field $F$ with good ordinary reduction at all the primes above $p$ in $F$. Let $F_\infty$ be a uniform admissible $p$-adic extension of $F$ of dimension $d \geq 2$. In this context, assume that the Selmer group over $F_\infty$ is cotorison over the corresponding Iwasawa algebra.  Under the stronger hypothesis of the  $\mathfrak{M}_H(G)$-conjecture, Hung and Lim  give an explicit bound on the growth of the Mordell--Weil rank of $A$ along $F_\infty$ (see \cite[Theorem 3.1]{HL20}). In the case of a supersingular elliptic curve, they also give an explicit bound for the Mordell--Weil rank of $E$ along the $\Zp^2$-extension of a quadratic imaginary field where $p$ splits (see \cite[Theorem 6.3, Conjecture 2]{HL20}). {Under the same assumptions, in the recent work \cite{ray2021asymptotic}, A. Ray proved stronger bounds for the Mordell--Weil rank of an elliptic curve with good ordinary reduction at all the primes above $p$ along the noncommutative extension $F(E[p^\infty])$ (see \cite[Theorem 2.5 and Remark 2.6]{ray2021asymptotic}.} Therefore, it is a natural question to generalize {those} result in the supersingular case when $F_\infty$ is a $\Z_p^d$-extension over $F$ containing the cyclotomic extension $F_\cyc$.  One might also ask if it is possible to remove this hypothesis on the $\mathfrak{M}_H(G)$-conjecture, or at least replace it with something weaker. 
{Another avenue to explore is the case of mixed reduction type. Let $E$ be an elliptic curve defined over a number field $F$ where $p$ splits completely. Suppose that $E$ has good reduction at primes above $p$. In \cite{leilimmordellweil}, Lei and Lim constructed multi-signed Selmer groups where they allow both ordinary or supersingular reduction at primes above $p$. Under the assumption that at least one prime is supersingular for $E$ and that the dual of the aforementioned Selmer groups are torsion over the appropriate Iwasawa algebra, they show \cite[Theorem 5.9]{leilimmordellweil} that the Mordell-Weil rank of $E$ stays bounded along the cyclotomic $\Zp$-extension of $F$. One may ask if this can be generalized to abelian varieties with mixed reduction type at primes above $p$ over more general extensions.} These are our future projects and needs further research.

\end{remark}



	\section{Speculative remarks on multi-signed \texorpdfstring{$p$-adic $L$-functions for $\mathrm{GSp}(4)$}{p-adic L-functions for GSp(4)} by Chris Williams}\label{sec:speculative remarks}
	
	
	We thank Chris Williams\footnote{CW would like to thank Antonio Lei and David Loeffler for very informative conversations on this topic, though any and all misconceptions are his own.} for allowing us to include this in our paper.

	There are a number of works in the literature proving signed Iwasawa main conjectures relating signed Selmer groups to signed $p$-adic $L$-functions. As such, it is natural to ask if the Selmer groups of the present paper have corresponding $p$-adic $L$-functions. We finally make some (very) speculative remarks in this direction.

	\subsection{Elliptic curves}
	In the case of elliptic curves over $\Q$ with good supersingular reduction, there are two signed Selmer groups, and two `standard' $p$-adic $L$-functions, i.e.\ those interpolating $L$-values in the standard (and only) critical region. Via \cite{Pol03} these give rise to two signed $p$-adic $L$-functions, and the signed Iwasawa main conjectures \cite{castella2018iwasawa} relate all possible Selmer groups and all possible $p$-adic $L$-functions.
	
	Let $K$ be an imaginary quadratic field in which $p$ splits. Let $E$ be an elliptic curve over $K$ with good supersingular reduction at both primes above $p$.  Via modularity -- recently proved in many cases in \cite{CaraianiNewton} -- we expect that if it is not a $\mathbb{Q}$-curve, $E/K$ corresponds to a weight 2 cuspidal Bianchi newform $\mathcal{F}$ of level $\Gamma_0(\mathfrak{n})$, for some ideal $\mathfrak{n} \subset \mathcal{O}_K$ prime to $p\mathcal{O}_K$. It thus makes sense to discuss the theory on the automorphic side, for $\cF$ rather than $E$.
 
In this setting, the set $\mathcal{I}$ has size 6, containing the pairs
	\[
	(J_{\mathfrak{p}},J_{\mathfrak{p}^c}) = (\varnothing, \{1,2\}),\  (\{1\},\{1\}),\  (\{1\},\{2\}),\ (\{2\},\{1\}),\  (\{2\},\{2\}),\  (\{1,2\},\varnothing).
	\]
	These are the examples of Proposition \ref{canonical}, and thus all 6 correspond to canonical multi-signed Selmer groups in this case. On the analytic side, there are only \emph{4} multi-signed $p$-adic $L$-functions, constructed in \cite[\S5]{Loe14} from the 4 standard $p$-adic $L$-functions of \S4 \emph{op.\ cit}. These correspond to the 4 Selmer groups where $|J_{\mathfrak{p}}| = |J_{\mathfrak{p}^c}| = 1$. The discrepancy here arises as the standard $p$-adic $L$-functions do not account for all possible regions where the $L$-function has critical values. This phenomenon is discussed in \cite[Fig.\ 4.1]{bertolinidarmonprasanna13} or \cite[Fig.\ 6.1]{LLZ15}. There should be two additional two-variable $p$-adic $L$-functions of `BDP type', interpolating the critical $L$-values in the `non-standard' region. When $E$ can be defined over $\Q$, i.e.\ when $\cF$ is the base-change of a classical modular form, one- and two-variable $p$-adic $L$-functions interpolating values in this region were studied in \cite{bertolinidarmonprasanna13} and \cite{xinwanwanrankinselberg} respectively. The Selmer groups where $J_{\mathfrak{p}}$ or $J_{\mathfrak{p}^c}$ is empty should be related to these BDP type $p$-adic $L$-functions. Their construction in the non-base-change setting (that is, for `genuine' Bianchi modular forms) remains mysterious.
	
	We briefly elaborate why there are four standard $p$-adic $L$-functions. Recall the Bianchi modular form $\cF$ has level $\Gamma_0(\mathfrak{n})$. There are four $p$-refinements to level $\Gamma_0(p\mathfrak{n})$: there are independently two refinements at $\mathfrak{p}$ and two at $\mathfrak{p}^c$, corresponding to choices of roots of the Hecke polynomials at $\mathfrak{p}$ and $\mathfrak{p}^c$ (see \cite[\S2.1]{BW18}). This amounts to choosing one of the (four) Hecke eigenspaces in $S_2(\Gamma_0(p\mathfrak{n}))$ where the prime-to-$p$ Hecke operators act as they do on $\mathcal{F}$. Each  $p$-refinement yields a standard $p$-adic $L$-function.
	
	Rephrasing, $\mathcal{F}$ generates an automorphic representation $\pi$ of $\mathrm{GL}_2(\mathbb{A}_K)$ whose local components $\pi_{\mathfrak{p}}$ and $\pi_{\mathfrak{p}^c}$ are both unramified principal series. A $p$-refinement is a pair of (independent) choices of $U_{\mathfrak{p}}$ and $U_{\mathfrak{p}^c}$ eigenvalues in the 2-dimensional spaces $\pi_{\mathfrak{p}}^{\mathrm{Iw}_{\mathfrak{p}}}$ and $\pi_{\mathfrak{p}^c}^{\mathrm{Iw}_{\mathfrak{p}^c}}$ of Iwahori-invariant vectors.
	
	Summarising, in the case of elliptic curves, all of the 6 elements of the set $\mathcal{I}$ have attached multi-signed Selmer groups and $p$-adic $L$-functions, which we expect to be related by suitable Iwasawa main conjectures.

	\subsection{Abelian surfaces}
	
		Suppose now $2g=2$, i.e.\ $A$ is an abelian surface. Under the paramodularity conjecture a suitably `generic' abelian surface $A/K$ should correspond to a cuspidal automorphic representation $\Pi$ of $\mathrm{GSp}_4(\A_{K})$ (and this is known after base-change to some finite extension by \cite{BCGP-pot-mod}). If $A$ has good reduction at the primes above $p$, then $\Pi_{\mathfrak{p}}$ and $\Pi_{\mathfrak{p}^c}$ will be unramified. 
		
		In this case, the set $\mathcal{I}$ has size 70. By analogy to the elliptic curves case, one might naively expect that there are 70 multi-signed Selmer groups with 70 corresponding multi-signed $p$-adic $L$-functions. A key difference in this case, however, is that unlike weight 2 classical and Bianchi modular forms, where it exists $\Pi$ is \emph{not cohomological}. This causes some degeneration, which on the algebraic side is reflected in Proposition \ref{canonical} of the main text: only 6 elements of $\mathcal{I}$ lead to canonically defined multi-signed Coleman maps that are independent of the choice of Hodge-compatible basis. We now indicate how this degeneration occurs on the analytic side.

	\subsubsection{Counting $p$-adic $L$-functions: cohomological case}
	First consider the cohomological case, which is the closest direct analogue to the elliptic curves setting. Let $\Pi$ be a cohomological cuspidal automorphic representation of $\mathrm{GSp}_4(\A_K)$ unramified at the primes above $p$. As above, there should be standard $p$-adic $L$-functions attached to all $p$-refinements of $\Pi$ to sufficiently deep level at $p$. In this case, the \emph{Panchishkin condition} \cite{Pan94} predicts that we should look for Hecke eigensystems that arise in the Klingen-parahoric invariants of $\Pi_{\mathfrak{p}}$ and $\Pi_{\mathfrak{p}^c}$. One may compute that the Klingen-parahoric invariants of $\Pi_{\mathfrak{p}}$ are 4-dimensional (e.g.\ \cite[Prop.\ 3.15]{OST19}). Given $\Pi$ is cohomological, we expect that this 4-dimensional space should be the direct sum of four 1-dimensional Hecke eigenspaces, each with different Hecke eigenvalues. There should then be 16 standard $p$-adic $L$-functions for $\Pi$, corresponding to the 16 distinct pairs of choices of Klingen-invariant Hecke eigensystems in $\Pi_{\mathfrak{p}}$ and $\Pi_{\mathfrak{p}^c}$. 

	It is natural to expect  higher-weight analogues of the constructions of the present article, upon which we may ask which elements of $\mathcal{I}$ these $p$-adic $L$-functions should correspond to. An alternative version of this theory can be described after transferring $\Pi$ to an automorphic representation $\pi$ of $\mathrm{GL}_{4}(\A_K)$ via \cite{AS06}, as considered in \cite[\S3.3]{DJR18} or \cite[\S5--7]{BDGJW}. One sees that, for the Asgari--Shahidi conventions considered in \cite{BDGJW}, the 16 standard $p$-adic $L$-functions should correspond to Selmer groups where $J_{\mathfrak{p}}$ and $J_{\mathfrak{p}^c}$ are independently allowed to be one of $\{1,2\}, \{1,3\}, \{4,2\}, \{4,3\}$ (i.e. each can contain precisely one of $\{1,4\}$ and precisely one of $\{2,3\}$).  It is reasonable to expect that from these, one can construct 16 multi-signed $p$-adic $L$-functions (see \S\ref{sec:what is known}).	
	This accounts for only 16 of the 70 elements in $\mathcal{I}$:
	\begin{itemize}
		\item[(a)] There are a further 20 elements of $\mathcal{I}$ where each of $J_{\mathfrak{p}}$ and $J_{\mathfrak{p}^c}$ both have size 2, but are not of the special shape above: these should correspond to $p$-refinements of the $\mathrm{GL}_4$-representation $\pi$ that are not of `Shalika type'. This classification of the $p$-refinements is studied in detail in \cite{classical-locus}; in the language \emph{op.\ cit}., the 16 `good/Shalika-type' elements above exactly correspond to the refinements that are `$Q$-spin' at both primes above $p$, and the other 20 are not $Q$ spin at one or both primes above $p$.
		\item[(b)] There are 16 elements of $\mathcal{I}$ where $|J_{\mathfrak{p}}| = 1$ (so $|J_{\mathfrak{p}^c}| = 3$), and 16 when $|J_{\mathfrak{p}^c}| = 1$.
		\item[(c)] Finally, there are 2 cases where either $J_{\mathfrak{p}} = \varnothing$ or $J_{\mathfrak{p}^c} = \varnothing$. 
	\end{itemize}
All of this is reflected in the fact that $L(\Pi,s)$ now has many more critical regions, analogous for example to \cite[\S2.3]{LZ-GSp4-GL2}. The standard $p$-adic $L$-functions above and in (a) will only see the standard critical region. The 2 cases of type (c) should correspond to BDP-style $p$-adic $L$-functions, and will interpolate values in another of the critical regions. One might expect more types of $p$-adic $L$-functions, corresponding to type (b) elements of $\mathcal{I}$ and interpolating $L$-values in the other regions. This hints at a tantalising, but at present very mysterious, picture of Selmer groups and $p$-adic $L$-functions in this setting.

\subsubsection{Counting $p$-adic $L$-functions: non-cohomological case}
	Now suppose $\Pi$ is not cohomological, attached to an abelian variety $A$ with supersingular reduction at each prime above $p$. This causes degeneracy in the above picture.
	
	In the picture of \cite[\S2.3]{LZ-GSp4-GL2}, a number of the critical regions degenerate away to nothing: that is, some of the critical regions are empty in the non-cohomological case. (This is an analogue, for example, of the fact that the standard critical region is empty for weight 1 modular forms). In particular, in the type (b) cases above there is no hope of constructing $p$-adic $L$-functions with interpolative properties for non-cohomological $\Pi$. 
	
	There is also degeneracy in the 16 standard $p$-adic $L$-functions. In the supersingular case, the Frobenius eigenvalues at $p$ can only be $\pm \sqrt{p}$, each appearing twice. The 4-dimensional Klingen-invariants in $\Pi_{\mathfrak{p}}$ hence cannot split into four disjoint 1-dimensional Hecke eigenspaces, but only as a direct sum of two 2-dimensional Hecke eigenspaces, putting us in the so-called `irregular' setting. For $\mathrm{GL}_2$ over $\Q$ and $K$, the irregular setting was for example studied in \cite{BetWil20}. As a result, there should only be 4 standard $p$-adic $L$-functions in this case, corresponding to independent choices of eigenvalue $\pm \sqrt{p}$ at $\mathfrak{p}$ and $\mathfrak{p}^c$.
	
	These 4 standard $p$-adic $L$-functions should then have multi-signed analogues, corresponding to the 4 canonical Selmer groups in Proposition \ref{canonical} with $|J_{\mathfrak{p}}|= |J_{\mathfrak{p}^c}| = 2$.

	This picture holds in higher dimensions: for cohomological $\Pi$, the number of standard $p$-adic $L$-functions for $\mathrm{GSp}_{2g}$ will grow with $g$, but in the case of supersingular abelian varieties, there will only ever be 2 choices (again either $\pm\sqrt{p}$) of $p$-refinement for each of $\mathfrak{p}$ and $\mathfrak{p}^c$, reflected on the algebraic side in the four `standard' cases in Proposition \ref{canonical}.

The other cases of Proposition \ref{canonical}, where $J_{\mathfrak{p}}$ or $J_{\mathfrak{p}^c}$ is empty, provide good evidence that there exist two-variable BDP-style $p$-adic $L$-functions attached to abelian varieties.

	\subsubsection{What is known?}\label{sec:what is known}
	For cohomological automorphic representations of $\mathrm{GSp}_4$ over $K$, the existence of standard $p$-adic $L$-functions is now known for appropriate non-critical slope refinements (those of `Shalika type' above), but only after transfer to $\mathrm{GL}_4$ (see \cite{WilliamsGL2nNF}). The signed theory has not yet been explored in this setting. 
 
 We briefly also mention some related work when the base field is $\Q$, and where more is known. For cohomological representations $\Pi$ on $\mathrm{GSp}(4)/\Q$, standard $p$-adic $L$-functions for $\Pi$ have been constructed (after transfer to $\pi$ on $\mathrm{GL}(4)$) for Klingen-invariant eigensystems that correspond to non-ordinary, \emph{non-critical} $p$-refinements for $\mathrm{GL}(4)$ (see \cite{BDW20}). When $\pi$ is  `good supersingular' at $p$, and there are two non-critical slope $p$-refinements $\tilde\pi_1$ and $\tilde\pi_2$ of $\pi$, the recent papers \cite{Roc20} and \cite{LR-GL2n} construct two signed $p$-adic $L$-functions attached to $\pi$. They use the unbounded $p$-adic $L$-functions $L_p(\tilde\pi_1)$ and $L_p(\tilde\pi_2)$ of \cite{BDW20}, and prove that 
	\[
	L_p(\tilde\pi_1) \pm L_p(\tilde\pi_2) = L_p^\pm \times \mathrm{log}_\pi^\pm,
	\]
	where $L_p^\pm$ is a measure and $\mathrm{log}_\pi^\pm$ is Pollack's (unbounded) $\pm$-logarithmic distribution. The signed $p$-adic $L$-functions are then the measures $L_p^
	\pm$.
	
	One might expect the existence of two more (critical slope) signed $p$-adic $L$-functions in this case. This is presently not known, but one can guess at the shape of this theory. There are two further critical slope $p$-refinements of $\Pi$, transferring to critical slope $p$-refinements $\tilde\pi_3$ and $\tilde\pi_4$ of $\pi$ via the process in \cite[\S5]{BDGJW}. In the `supersingular' setting above, these refinements can be shown to have the same slope for the $U_p$ operator defined by $\mathrm{diag}(p1_n,1_n)$; hence, if $\tilde\pi_3$ and $\tilde\pi_4$ are both non-critical (cf.\ \cite[Rem.\ 3.15]{BDW20}), then the corresponding $p$-adic $L$-functions constructed in Theorem 6.23 \emph{op.\ cit}.\ have the same growth property. One might reasonably expect (in line with \cite{Roc20, LR-GL2n}) that $L_p(\tilde\pi_3) \pm L_p(\tilde\pi_4)$ is of the form [measure]$^\pm$ $\times$  [explicit unbounded logarithmic distribution]$^\pm$. However, unlike in \cite{Roc20,LR-GL2n}, it is not possible to deduce this solely using the interpolative properties of $L_p(\tilde\pi_3)$ and $L_p(\tilde\pi_4)$ alone: the growth is too big, so there are insufficient critical $L$-values to uniquely determine $L_p(\tilde\pi_3)\pm L_p(\tilde\pi_4).$
	
	The above constructions fundamentally use that $\Pi$ (hence $\pi$) appears in the Betti cohomology of locally symmetric spaces, and break down for non-cohomological $\Pi$. To construct $p$-adic $L$-functions for such $\Pi$, one strategy is to deform from cohomological to non-cohomological weight along an eigenvariety. In the present setting, this appears extremely difficult, as the points are non-regular, non-cohomological, and non-ordinary. In particular:
	\begin{itemize}
		\item[--] One must obtain good control over the $\mathrm{GSp}_4$-eigenvariety at non-regular, non-cohomological points, analogous to the study done in \cite{BerDimPoz} for $\mathrm{GL}_2/\Q$. 
		\item[--] In the $\mathrm{GSp}_4$ setting, the relevant points will not be ordinary, meaning one cannot study them using Hida families. Instead one must use Coleman families, which do not exist over the whole weight space but only over smaller neighbourhoods.
		\item[--] For $\mathrm{GSp}_4/K$, since $\mathrm{GSp}_4(\C)$ does not admit discrete series, the picture is even worse. In general, it is not expected that the $p$-refinements of $A/K$ will vary in classical families in the eigenvariety; this is a folklore conjecture \cite[Intro.]{Urb11}, a general analogue of \cite{CM09}. In this scenario deformation from cohomological weight is impossible.
	\end{itemize}
	
For $A/\Q$ ordinary, a $p$-adic $L$-function attached to its (unique) ordinary $p$-refinement is given in \cite[Prop.\ 3.3]{LZ-BSD}. However, the method of proof -- using higher Hida theory, via the coherent cohomology of Shimura varieties -- will not apply to the analogous setting over $K$, where the corresponding locally symmetric spaces are not even algebraic varieties. 
	
Summarising, for general supersingular $A$ defined over an imaginary quadratic field $K$ with $p$ split, constructing the 4 conjectured standard $p$-adic $L$-functions attached to $A/K$ seems completely out of reach with present methods.

Finally, consider the degenerate case where $A/K$ is a product of elliptic curves $E_1 \times \cdots \times E_g$, each $E_i$ defined over $K$. The Galois representation attached to $A$ will just be a direct sum of the Tate modules of the $E_i$, and its $L$-function will be a product of the $L(E_i,s)$. Good candidates for the multi-signed $p$-adic $L$-functions in this case would be products of the multi-signed $p$-adic $L$-functions attached to $E_i$ constructed in \cite[Prop.\ 5.2]{Loe14}, where we take the same choice in $\{++, +-, -+, --\}$ at each of the $E_i$.

\bibliographystyle{unsrt}
\bibliography{main}
\end{document}